\definecolor{green}{rgb}{0,0.5,0}
\newtheorem{theorem}{Theorem}[section]
\newtheorem{proposition}{Proposition}
\newtheorem{lemma}{Lemma}[section]
\numberwithin{lemma}{section}
\newtheorem{corollary}{Corollary}
\theoremstyle{definition}
\newtheorem{remark}{Remark}[section]
\numberwithin{equation}{section}
\newtheorem{definition}{Definition}
\newcommand{\abs}[1]{\lvert #1 \rvert}
\newcommand{\norm}[1]{\lVert #1 \rVert}
\newcommand{\N}{{\mathbb N}}
\newcommand{\R}{{\mathbb R}}
\newcommand{\Z}{{\mathbb Z}}
\newcommand{\dif}{\,\mathrm{d}}
\newcommand{\un}{u_{n}}
\newcommand{\normLqplusp}[2]{\lVert #1 \rVert^{q+1}_{L^{q+1} ( #2 )}}
\newcommand{\norme}[2]{\lVert #1 \rVert_{E ( #2 )}}
\newcommand{\weakly}{\rightharpoonup}
\newcommand{\intrn}{\int_{\R^N}}
\newcommand{\half}{\frac{1}{2}}
\newcommand{\overqplus}{\frac{1}{q+1}}
\newcommand{\quarter}{\frac{1}{4}}
\newcommand{\tstar}{2^{\ast}}
\newcommand{\clambda}{c_{\lambda}}
\newcommand{\cbarlow}{\underline{c}}
\newcommand{\dx}{\text{d}x}
\begin{document}

\title[Solutions to Schr\"odinger-Poisson system]{Groundstates and infinitely many high energy solutions to a class of nonlinear Schr\"odinger-Poisson systems}

\author{Tomas Dutko}
\address{Department of Mathematics, Computational Foundry, Swansea University, Fabian Way, Swansea, U.K. SA1~8EN}
\email{662536@swansea.ac.uk}
\author{Carlo Mercuri}
\address{Department of Mathematics, Computational Foundry, Swansea University, Fabian Way, Swansea, U.K. SA1~8EN} 
\email{c.mercuri@swansea.ac.uk}
\author{Teresa Megan Tyler}
\address{Department of Mathematics, Computational Foundry, Swansea University, Fabian Way, Swansea, U.K. SA1~8EN}
\email{megan.tyler268@outlook.com}

\begin{abstract}
We study a nonlinear Schr\"{o}dinger-Poisson system which reduces to the nonlinear and nonlocal PDE  
\begin{equation*}
- \Delta u+ u + \lambda^2 \left(\frac{1}{\omega|x|^{N-2}}\star \rho u^2\right) \rho(x) u  = |u|^{q-1} u \quad x \in \R^N,
\end{equation*}
where $\omega = (N-2)\abs{\mathbb{S}^{N-1}},$ $\lambda>0,$ $q\in(1,2^{\ast} -1),$ $\rho:\R^N \to \R$ is nonnegative, locally bounded, and possibly non-radial, $N=3,4,5$ and $2^*=2N/(N-2)$ is the critical Sobolev exponent. In our setting $\rho$ is allowed as particular scenarios, to either 1) vanish on a region and be finite at infinity, or 2) be large at infinity. 
We find least energy solutions in both cases, studying the vanishing case by means of a priori integral bounds on the Palais-Smale sequences and highlighting the role of certain positive universal constants for these bounds to hold. Within the Ljusternik-Schnirelman theory we show the existence of infinitely many distinct pairs of high energy solutions, having a min-max characterisation given by means of the Krasnoselskii genus. Our results cover a range of cases where major loss of compactness phenomena may occur, due to the possible unboundedness of the Palais-Smale sequences, and to the action of the group of translations. \\ MSC: 35Q55, 35J20, 35B65, 35J60. \\
Keywords: Nonlinear Schr\"{o}dinger-Poisson System, Weighted Sobolev Spaces, Palais-Smale Sequences, Compactness, Multiple Solutions, Nonexistence.
\end{abstract}
\maketitle

\section{Introduction}

\noindent This paper is devoted to the nonlinear and nonlocal equation 
\begin{equation}\label{SP one equation}
\tag{$\mathcal E$}- \Delta u+ u + \lambda^2 \left(\frac{1}{\omega|x|^{N-2}}\star \rho u^2\right) \rho(x) u  = |u|^{q-1} u \quad x \in \R^N,
\end{equation}
where $\omega = (N-2)\abs{\mathbb{S}^{N-1}},$ $\lambda>0,$ $q\in(1,2^{\ast} -1),$ $\rho:\R^N \to \R$ is nonnegative, locally bounded, and possibly non-radial, $N=3,4,5$ and $2^*=2N/(N-2)$ is the critical Sobolev exponent.

We are mainly concerned with the existence and multiplicity of solutions, together with their variational characterisation. This brings us to addressing issues related to a suitable functional setting and its relevant properties, such as those related to separability and compactness. In particular, the variational formulation of \eqref{SP one equation} requires in general a functional setting different from the standard Sobolev space $H^1(\R^N).$ This is the case if the right hand side of the classical Hardy-Littlewood-Sobolev inequality
\begin{equation}\label{HLS intro}
\tag{HLS}\quad  \int_{\R^N}\int_{\R^N}\frac{u^2(x)\rho (x) u^2(y) \rho (y)}{|x-y|^{N-2}}\dif{x} \dif{y}\, \lesssim ||\rho u^2||_{L^{\frac{2N}{N+2}}(\R^N)}^2,
\end{equation}
\noindent is not finite for some $u\in H^1(\R^N).$ In what follows, we consider separately two assumptions on $\rho$:
\begin{enumerate}[label=$\mathbf{(\rho_{\arabic*})}$]
	\item \label{vanishing_rho} $\rho^{-1} (0)$ has non-empty interior and there exists $\overline{M} > 0$ such that
	\begin{equation*}
		\abs{x \in \R^N : \rho(x) \leq \overline{M}} < \infty;
	\end{equation*}
	 
	\item \label{coercive_rho} for every $M > 0$,
	\begin{equation*}
		\abs{x \in \R^N : \rho(x) \leq M} < \infty.
	\end{equation*}
\end{enumerate}
These are reminiscent of analogous assumptions considered in the `local' context of the nonlinear Schr\"odinger equation by Bartsch and Wang in \cite{Bartsch and Wang}. In particular, we will refer to \ref{vanishing_rho} as to the {\it vanishing case}, and to \ref{coercive_rho} as to the {\it coercive case}, as the latter assumption is verified if $\rho$ is locally bounded such that $\rho(x)\rightarrow \infty$ as $|x|\rightarrow \infty,$ yielding compactness properties in the functional setting which are stronger than in the other case. It is clear that \ref{vanishing_rho} is compatible with $\rho$ exploding, as well as with $\rho$ having a finite limit at infinity. The latter is a situation which yields loss of compactness phenomena to occur, in part due, in the present subcritical regime, to the action of the group of translations in $\R^N$. In this vanishing case we prove uniform a priori bounds on suitable sequences of approximated critical points, which allow us to construct nontrivial weak limits having a definite variational nature. \newline
To state and prove our results we define $E(\R^N) \subseteq H^1(\R^N)$ as
\[E(\R^N) \coloneqq \left\{u\in  W^{1,1}_{\textrm{loc}}(\R^N) \,: \, \|u\|_{E(\R^N)} < +\infty\right\},\]
\noindent with norm 
\[\|u\|_{E(\R^N)} \coloneqq \left(\int_{\R^N}(|\nabla u|^2 +  u^2)\dif x + \lambda\left(\int_{\R^N}\int_{\R^N}\frac{u^2(x)\rho (x) u^2(y) \rho (y)}{|x-y|^{N-2}}\dif x\dif y\right)^{1/2}\right)^{1/2}.\]
Variants of the space $E(\R^N)$ have been studied since the work of P.L. Lions \cite{Lions Hartree}, see e.g. \cite{RuizARMA}, and \cite{Bellazzini},\cite{Bonheure and Mercuri}, \cite{Mercuri Moroz Van Schaftingen}.  Solutions to \eqref{SP one equation} are the critical points of the $C^1(E(\R^N);\R)$ energy functional  
\begin{equation}\label{definition I}
	I_{\lambda}(u) = \frac{1}{2}\int_{\R^N}(|\nabla u|^2 + u^2)+\frac{\lambda^2}{4}\int_{\R^N}\int_{\R^N}\frac{u^2(x)\rho (x) u^2(y) \rho (y)}{\omega|x-y|^{N-2}}\dif x\dif y -\frac{1}{q+1}\int_{\R^N}|u|^{q+1}.
\end{equation} 
One could regard \eqref{SP one equation} as formally equivalent to a nonlinear Schr\"{o}dinger-Poisson system 

\begin{equation}\label{main SP system multiplicity}
\left\{
\begin{array}{lll}
- \Delta u+ u + \lambda^2 \rho (x) \phi  u = |u|^{q-1} u, \qquad &x\in \R^N,  \\
\,\,\, -\Delta \phi=\rho(x) u^2,\ &   x\in \R^N.
\end{array}
\right.
\end{equation}\\
\noindent 
In fact, it is well-known from classical potential theory that if $u^2\rho \in L^1_{\textrm{loc}
	}(\R^N)$ is such that 
\begin{equation}\label{double integral finite}
	\int_{\R^N}\int_{\R^N}\frac{u^2(x)\rho (x) u^2(y) \rho (y)}{|x-y|^{N-2}}\dif x\dif y < +\infty,
\end{equation}
\noindent then 
\begin{equation}\label{definition of phi u}
	\phi_u (x) = \int_{\R^N}\frac{\rho (y)u^2(y)}{\omega |x-y|^{N-2}}\dif y 
\end{equation}
\noindent is the unique weak solution in $D^{1,2}(\R^N)$ of the Poisson equation
\begin{equation}\label{poissoneqn}
	-\Delta \phi = \rho (x) u^2
\end{equation}
\noindent and it holds that 
\begin{equation}\label{double integral}
	\int_{\R^N} |\nabla \phi_u|^2 = \int_{\R^N} \rho \phi_u u^2  \dif x =\int_{\R^N}\int_{\R^N}\frac{u^2(x)\rho (x) u^2(y) \rho (y)}{\omega|x-y|^{N-2}}\dif{x} \dif{y} .
\end{equation}
Here we set 
	\[D^{1,2}(\R^N) = \{ u\in L^{\tstar}(\R^N) : \nabla u \in L^2(\R^N)\},\]
equipped with norm
	\[ \|u\|_{D^{1,2}(\R^N)} = \|\nabla u\|_{L^2(\R^N)}.\]
\noindent 
By elliptic regularity, the local boundedness of $\rho$ implies that any pair $(u,\phi)\in E(\R^N)\times D^{1,2}(\R^N)$ solution to (\ref{main SP system multiplicity}) is such that $u$ and $\phi$ are both of class $C^{1,\alpha}_{\textrm{loc}}(\R^N).$ In particular, if $u\geq 0$ is nontrivial, it holds that $u,\phi>0.$ Note that $\inf I_\lambda=-\infty,$ however it is an easy exercise to see that $I_\lambda$ is bounded below on the set of its nontrivial critical points by a positive constant. It therefore makes sense to define a solution $u\in E(\R^N)$ to \eqref{SP one equation} as a {\it groundstate} if it is nontrivial, and if it holds that $I_\lambda(u)\leq I_\lambda(v)$ for every nontrivial critical point $v\in E(\R^N)$ of $I_\lambda$. \newline 

Since the classical work of Ambrosetti-Rabinowitz \cite{Ambrosetti and Rabinowitz}, considerable advances have been made in the understanding of several classes of nonlinear elliptic PDE's in the absence of either the so-called Palais-Smale or the Ambrosetti-Rabinowitz conditions, yet achieving in the spirit of \cite{Ambrosetti and Rabinowitz} existence and multiplicity results; see e.g. \cite{Ambrosetti and Malchiodi2, Ambrosetti and Malchiodi, Struwe Book, Willem book}.  
In addition to those of Strauss \cite{Strauss} and Berestycki-Lions \cite{Berestycki and Lions}, which have been a breakthrough in the study of autonomous scalar field equations on the whole of $\R^N,$ a great deal of work, certainly inspired by that of Floer and Weinstein \cite{Floer and Weinstein}, has been devoted to the study of nonlinear Schr\"odinger equations with nonradial potentials and involving various classes of nonlinearities:
\begin{equation}\label{nonlinear s}
	-\Delta u + V(x) u = f(x,u), \quad x\in \R^N.
\end{equation}
The classical works of Rabinowitz \cite{Rabinowitz} and Benci-Cerami \cite{Benci Cerami} have provided a penetrating analysis on equations like \eqref{nonlinear s}, and inspired the work on various remarkable variants of it, under different hypotheses on $V$ and $f$ which may allow loss of compactness phenomena to occur. Authors have contributed to understand these phenomena in a min-max setting, in analogy to what had been discovered and highlighted in the context of minimisation problems by P.L. Lions in \cite{Lions} and related papers. An interesting case has been considered by Bartsch and Wang \cite{Bartsch and Wang} who proved existence and multiplicity of solutions to \eqref{nonlinear s} for $V(x) = 1 + \lambda^2 \rho(x),$ and with $\rho$ satisfying either \ref{vanishing_rho} or \ref{coercive_rho}. 
Years later Jeanjean and Tanaka in \cite{Jeanjean and Tanaka} and related papers, have looked into cases where $f(x,u)$ may violate the Ambrosetti-Rabinowitz condition. Remarkably, they have been able to overcome the possible unboundedness of the Palais-Smale sequences, with an approach which is reminiscent of the `monotonicity trick' introduced for a different problem by Struwe \cite{Struwe Paper}. \newline\newline
Even though our equation \eqref{SP one equation} is formally a nonlocal variant of the above nonlinear Schr\"odinger equation, there are some specific variational features that we wish to highlight, which are not shared with \eqref{nonlinear s}.  Firstly, although our nonlinearity $f(x,u)=|u|^{q-1}u$ does satisfy the Ambrosetti-Rabinowitz condition, to the best of our knowledge it is still not known whether the boundedness of the Palais-Smale sequences holds for $q\in(2,3).$ We stress that for this reason and in this range of exponents, it is not known whether the Palais-Smale condition holds, even with $\rho\equiv 1$ and working with the subspace of radial functions in $H^1(\R^N)$.
In the range $q\in(2,3],$ the relation between the mountain-pass level and the infimum over the Nehari manifold for the functional $I_\lambda$ associated to \eqref{SP one equation} seems non-straightforward; we recall that these levels coincide when dealing with \eqref{nonlinear s} for a fairly broad class of nonlinearities $f,$ see e.g. \cite[p. 73]{Willem book}.
In the case of pure power nonlinearities and $q\in(1,2],$ and unlike for the action functional associated with \eqref{nonlinear s}, the variational properties of $I_\lambda$ are particularly sensitive to $\lambda,$ yielding existence, multiplicity (of a local minimiser and at the same time of a mountain-pass solution) and nonexistence results, see e.g. \cite{RuizJFA, RuizARMA} and \cite{Mercuri}.
Finally, a natural functional setting associated to \eqref{SP one equation} may not be necessarily a Hilbert space. In fact note that assumption \ref{vanishing_rho} is compatible with a situation where $\rho(x) \rightarrow \rho_{\infty}> 0$ as $|x| \rightarrow \infty$, in which the space $E(\R^N) \simeq H^1(\R^N),$ as well as with the case $\rho(x)\rightarrow \infty$ as $|x|\rightarrow \infty,$ in which $E(\R^N)\subset H^1(\R^N);$ we tackle the case of vanishing $\rho$ with a unified approach for these particular sub-cases.
\newline

Variants of (\ref{SP one equation}) appear in the study of the quantum many--body problem, see \cite{Bao Mauser Stimming}, \cite{Catto}, \cite{Lions Hartree Fock}. The convolution term represents a repulsive interaction between particles, whereas the local nonlinearity $|u|^{q-1} u$ is a generalisation of the $u^{5/3}$ term introduced by Slater \cite{Slater} as local approximation of the exchange term in Hartree--Fock type models, see e.g. \cite{Bokanowski Lopez Soler}, \cite{Mauser}. In the last few decades, nonlocal equations like (\ref{SP one equation}) have received increasing attention on questions related to existence, non-existence, variational setting and singular limit in the presence of a parameter. We draw the reader's attention to \cite{Ambrosetti},  \cite{Benci and Fortunato},  \cite{Catto} and references therein, for a broader mathematical picture on questions related to Schr\"odinger-Poisson type systems.  Relevant contributions to the existence of positive solutions, mostly for $q>3=N,$ such as \cite{Cerami and Vaira, Cerami and Molle}, are based on the classification of positive solutions given by Kwong \cite{Kwong} to
\begin{equation*}\label{Kwong}
  - \Delta u+  u = u^{q} , \qquad x\in \R^3,  
 \end{equation*}
regarded as a `limiting' PDE when $\rho(x)\rightarrow 0,$ as $|x|\rightarrow \infty.$  Recently in \cite{Sun Wu Feng, Mercuri and Tyler}, in the case  $\rho(x)\rightarrow 1,$ as $|x|\rightarrow \infty,$ the relation between \eqref{main SP system multiplicity} and
\begin{equation}\label{A-Ruiz PDE}
\begin{cases}
	-\Delta u + u + \lambda^2 \phi u = |u|^{q-1} u, \quad &\mathbb{R}^3\\
	-\Delta \phi = u^2 &\mathbb{R}^3
\end{cases}
\end{equation}
\noindent 
as a limiting problem, has been studied, though a full understanding of the set of positive solutions to \eqref{A-Ruiz PDE} has not yet been achieved. \\

Considerably fewer results have been obtained in relation to the multiplicity of solutions. It is worth mentioning \cite{Ambrosetti and Ruiz} whose (radial) approach is suitable in the presence of constant potentials. More precisely Ambrosetti-Ruiz \cite{Ambrosetti and Ruiz} have studied the problem
\eqref{A-Ruiz PDE} with $\lambda > 0$ and $1 < q < 5$. When $q \in (1,2)\cap(3,5)$ their approach relies on the symmetric version of the Mountain-Pass Theorem \cite{Ambrosetti and Rabinowitz}, whereas for $q \in (2,3]$ and in the spirit of \cite{Jeanjean and Tanaka, Struwe Paper}, they develop a min-max approach to the multiplicity which in fact improves upon \cite{Ambrosetti and Rabinowitz} and is based on the existence of bounded Palais-Smale sequences at specific levels associated with the perturbed functional
$$
I_{\mu,\lambda}(u) = \frac{1}{2}\int_{\R^3}(|\nabla u|^2 + u^2)+ \frac{\lambda^2}{4}\int_{\R^3}\int_{\R^3}\frac{u^2(x) u^2(y)}{\omega|x-y|}\dif x\dif y - \frac{\mu}{q+1} \int_{\R^3} \abs{u}^{q+1} \dx, 
$$ for a dense set of values $\mu \in \left[\half, 1 \right).$
	
\subsection{Main Results}
In the vanishing case \ref{vanishing_rho} our main result is the following.
\begin{theorem}[{\bf Groundstates for $q\geq 3$ under under \ref{vanishing_rho}}]\label{groundstate sol} Let $N=3$, $\rho \in L^\infty_{\textrm{loc}}(\R^N)$ be nonnegative, satisfying \ref{vanishing_rho}, and $q \in [3, \tstar-1).$ There exists a positive constant $\lambda_*=\lambda_*(q,\overline{M})$ such that for every $\lambda\geq\lambda_*,$ \eqref{SP one equation} admits a positive groundstate solution $u\in E(\R^3).$ For $q>3$, $u$ is a mountain-pass solution. 
\end{theorem}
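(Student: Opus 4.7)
The plan is to construct a positive critical point of $I_\lambda$ via the Mountain-Pass Theorem and then identify it as a groundstate. The vanishing hypothesis \ref{vanishing_rho} enters from the very start: since $\rho^{-1}(0)$ has nonempty interior, I pick a nonnegative $w \in \Cinfcomp(\R^3) \setminus \{0\}$ with $\operatorname{supp} w \subset \rho^{-1}(0)$, so that
\[
I_\lambda(tw) = \frac{t^2}{2}\normHil{w}{\R^3}^{2} - \frac{t^{q+1}}{q+1}\normLqplusp{w}{\R^3}
\]
is \emph{independent of $\lambda$} and tends to $-\infty$ as $t \to +\infty$. Combined with the standard lower estimate $I_\lambda(u) \geq \tfrac12\|u\|_{E(\R^3)}^2 - C\|u\|_{E(\R^3)}^{q+1}$ near the origin, obtained from the Sobolev embedding $H^1(\R^3) \hookrightarrow L^{q+1}(\R^3)$ together with \eqref{HLS intro}, this yields the mountain-pass geometry with level $c_\lambda \leq c_* := \max_{t \geq 0} I_\lambda(tw)$ bounded above uniformly in $\lambda$. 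Ekeland's variational principle then produces a Palais-Smale sequence $\{u_n\} \subset E(\R^3)$ at level $c_\lambda$.

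For $q > 3$, the Ambrosetti-Rabinowitz combination
\[
I_\lambda(u_n) - \frac{1}{q+1}\inner{I'_\lambda(u_n)}{u_n} = \Big(\tfrac12 - \tfrac{1}{q+1}\Big)\normHil{u_n}{\R^3}^{2} + \lambda^2 \Big(\tfrac14 - \tfrac{1}{q+1}\Big)\int_{\R^3}\rho\,\phi_{u_n}u_n^{2}\,\dif x
\]
has both coefficients strictly positive, giving uniform boundedness of $\{u_n\}$ in $E(\R^3)$ at once. In the borderline case $q = 3$ the nonlocal coefficient vanishes and only $H^1$-boundedness is obtained directly; a more delicate bootstrap, feeding the Palais-Smale relation $\inner{I'_\lambda(u_n)}{u_n} = o(\|u_n\|_{E(\R^3)})$ back into the definition of the $E$-norm and using $H^1(\R^3) \hookrightarrow L^4(\R^3)$, then gives the full a priori bound $\sup_n \|u_n\|_{E(\R^3)} < \infty$. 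The positive universal HLS/Sobolev constants highlighted in the abstract enter decisively at this stage, as they fix the numerical threshold beyond which the bootstrap closes.

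The key compactness step is to rule out concentration at infinity. By Lions' lemma applied in the $H^1$-bounded case, either $u_n \to 0$ in $L^{q+1}(\R^3)$ (incompatible with $c_\lambda > 0$), or there exist $\delta, R > 0$ and $y_n \in \R^3$ with $\int_{B_R(y_n)} u_n^2 \geq \delta$. If $\{y_n\}$ is bounded I extract at once a nontrivial weak limit; suppose instead $|y_n| \to \infty$ and set $v_n(x) := u_n(x + y_n)$. By \ref{vanishing_rho}, $|\{\rho \leq \overline{M}\}| < \infty$, so on each fixed ball $\rho(\cdot + y_n) \geq \overline{M}$ for $n$ large, and the limit $\tilde u$ of $\{v_n\}$ is a nontrivial critical point of a limit functional whose nonlocal term is bounded below by
\[
\overline{M}^{2}\int_{\R^{3}}\int_{\R^{3}} \frac{\tilde u^{2}(x)\,\tilde u^{2}(y)}{\omega|x-y|}\,\dif x\,\dif y.
\]
Scaling together with \eqref{HLS intro} then forces the corresponding limit critical value to grow at least like a positive power of $\lambda\overline{M}$; since $c_\lambda \leq c_*$ is independent of $\lambda$, choosing $\lambda_* = \lambda_*(q, \overline{M})$ sufficiently large excludes this translation scenario for every $\lambda \geq \lambda_*$ and forces $\{y_n\}$ to remain bounded. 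This yields a nontrivial weak limit $u \in E(\R^3)$.

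Standard arguments show that $u$ is a weak solution; replacing $u$ by $|u|$ preserves the energy, and elliptic regularity combined with the strong maximum principle produce a positive classical solution. The groundstate property follows by comparing $I_\lambda(u)$ with the infimum of $I_\lambda$ over the set of its nontrivial critical points, which for $q > 3$ coincides with $c_\lambda$ via the standard Nehari-manifold fibration, simultaneously giving the mountain-pass characterisation in that range. The main obstacle throughout this scheme is the non-vanishing step under \ref{vanishing_rho}: unlike the coercive case \ref{coercive_rho}, the embedding $E(\R^3) \hookrightarrow L^{q+1}(\R^3)$ is not compact, and one must quantify carefully how the nonlocal energy grows in $\lambda$ and $\overline{M}$ in order to pin down the explicit threshold $\lambda_*$.
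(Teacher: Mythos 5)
Your scheme diverges substantially from the paper's and contains two real gaps, one conceptual and one structural.

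\textbf{The compactness step is not justified.} You propose a concentration-compactness / translation argument: if the Lions vanishing alternative is excluded and dichotomy/translations with $|y_n|\to\infty$ occur, one would pass to a limiting profile $\tilde u$ that is a critical point of a limit functional with charge density $\geq \overline{M}$, and you then claim that ``scaling together with \eqref{HLS intro} forces the corresponding limit critical value to grow like a positive power of $\lambda\overline{M}$.'' This claim is not substantiated, and is delicate: assumption \ref{vanishing_rho} is compatible with $\rho(x)\to\rho_\infty>\overline{M}$ as $|x|\to\infty$, so the limit problem is a genuine autonomous Schr\"odinger--Poisson system with nontrivial positive solutions, and it is not at all clear that its groundstate level diverges in $\lambda$. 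Even if this could be proved, the argument is by contradiction via a limiting profile and does not yield the quantitative information $I_\lambda(u)\le c_\lambda$ (needed for the groundstate claim). The paper instead uses direct, uniform integral estimates on the PS sequence: Lemma \ref{weightedL3boundlemma} gives $\lambda\int\rho|u_n|^3 \lesssim \|u_n\|_{E}^3$, hence $\int_{A(R)}|u_n|^3 \lesssim (\lambda\overline{M})^{-1}$, while $|B(R)|\to 0$ (Lemma \ref{measureofBgoingtozerolemma}) controls the complementary region; this gives both the nonvanishing of the weak limit (Proposition \ref{positivityofweakuthm}) and, through the Brezis--Lieb splitting $c_\lambda = I_\lambda(u)+\lim I_\lambda(u_n-u)$ together with a careful bound showing $\lim I_\lambda(u_n-u)\ge 0$ for $\lambda\geq\lambda_1$, the crucial inequality $I_\lambda(u)\leq c_\lambda$ (Proposition \ref{lambdaone}). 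Your proposal has no counterpart to this last estimate, which is where the constants $\lambda_0,\lambda_1$ actually emerge.

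\textbf{The case $q=3$ is not handled.} You acknowledge the nonlocal coefficient $(\frac14-\frac{1}{q+1})$ vanishes, but the ``bootstrap'' you invoke is not spelled out, and more importantly you never address the fact that for $q=3$ the identity $c_\lambda = \inf_{\mathcal N_\lambda}I_\lambda$ is \emph{not} known to hold (the fibration map $t\mapsto I_\lambda(tu)$ need not be unimodal because the nonlocal term scales as $t^4$). The paper treats $q=3$ separately: boundedness of PS sequences follows from a dedicated case analysis in Lemma \ref{suff conds bounded PS large q}, and the groundstate is produced by running Ekeland's variational principle \emph{on the Nehari manifold} to get a constrained minimizing sequence, verifying it is an unconstrained PS sequence (via a Lagrange-multiplier estimate using $G_\lambda'(w_n)(w_n)<-2S_4^4$), and then applying the Proposition \ref{lambdaone} estimates again to close the argument with $I_\lambda(w)=\inf_{\mathcal N_\lambda}I_\lambda$. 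Your proposal ends with ``the groundstate property follows\ldots which for $q>3$ coincides with $c_\lambda$'' — this implicitly concedes $q=3$ is unresolved but offers nothing in its place, yet $q=3$ is part of the statement.

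The mountain-pass geometry, the uniform upper bound $c_\lambda\le\bar c$ via a test function in $\rho^{-1}(0)$, the positivity/regularity step, and the $q>3$ boundedness and Nehari identification are all fine and match the paper.
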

We point out that by construction  $\lambda_*= \max\{\lambda_0,\lambda_1\},$ where $\lambda_0$ and $\lambda_1$ are universal constants defined in Proposition \ref{positivityofweakuthm} and in Proposition \ref{lambdaone}, which ensure that, for every $\lambda\geq\lambda_*,$ certain Palais-Smale sequences possess weak limits with a precise variational characterisation.\newline
This result extends to a nonlocal equation that of Bartsch-Wang \cite{Bartsch and Wang}, as we are able to show in the spirit of their work that for $\lambda$ large, there are no Palais-Smale sequences at the mountain-pass level which are weakly convergent to zero, in a context where the embedding of $E(\R^3)$ into $L^{q+1}(\R^3)$ is in general non-compact. This is the case if for instance  $\rho(x)\rightarrow \rho_\infty>\overline{M},$ as $|x|\rightarrow\infty.$ In this case $E(\R^3)\simeq H^1(\R^3),$ with equivalent norms by \eqref{HLS intro}, and the non-compactness of the embedding is a well-known fact. Under \ref{vanishing_rho}, a condition `at infinity' for certain Palais-Smale sequences to be relatively compact is given in Proposition \ref{PSconditionVan} and Proposition \ref{CPSconditionVan}.\newline
It is worth observing that the arguments of Proposition \ref{lambdaone} can be adapted to the original result of Bartsch and Wang \cite[Section 5]{Bartsch and Wang} on the nonlinear Schr\"odinger equation to prove in their setting,  for the whole range of exponents and for $\lambda$ large enough, the existence of a mountain-pass solution and hence, using the Nehari characterisation of the mountain-pass level \cite[p. 73]{Willem book}, the existence of a groundstate solution. We prove Proposition \ref{lambdaone} highlighting how the `interaction' between $\lambda$ and  $\overline{M}$ appearing in \ref{vanishing_rho} yields the desired estimates. To this aim we carry out a Brezis-Lieb type splitting argument in the spirit of \cite{Brezis and Lieb}, combining it with a simple weighted $L^3$ estimate given in Lemma \ref{weightedL3boundlemma}, together with the relation between the mountain-pass level and the infimum on the Nehari manifold, which may be sensitive to whether $q=3$ or $q>3.$ \newline
To prove Theorem \ref{groundstate sol} we follow a Nehari constraint approach, paying attention to the more delicate case $q=3.$ For this exponent, it is not clear whether the mountain-pass level is critical. From a variational perspective, this is another point that makes our work different from \cite{Bartsch and Wang}; see also Theorem \ref{least energy pure homog rho} below. We stress here that \ref{vanishing_rho} may not be enough for the right continuity of the mountain-pass levels $c_\lambda(q)$ to hold as $q\rightarrow 3^+.$\newline

In the coercive case \ref{coercive_rho} we show that $E(\R^N)$ is compactly embedded in $L^p(\R^N)$ for any $2<p<2^*$ and $\lambda>0.$ This is used to prove the following.
\begin{theorem}[\textbf{Groundstates for $q\geq 3$ under \ref{coercive_rho}}]\label{existenceandleastenergy}
Let $N = 3$, $\rho \in L^\infty_{\textrm{loc}}(\R^3)$ be nonnegative, satisfying \ref{coercive_rho}, and $q \in [3,\tstar -1)$. Then, for any fixed $\lambda > 0$, \eqref{SP one equation} has both a positive mountain-pass solution and a positive groundstate solution in $E(\R^3),$ whose energy levels coincide for $q>3.$
\end{theorem}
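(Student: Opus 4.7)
My plan is to combine the Mountain Pass theorem with the compact embedding $E(\R^3)\hookrightarrow L^{q+1}(\R^3)$ announced in the paragraph immediately preceding the statement, and then to extract a groundstate via a direct minimisation over the set of nontrivial critical points of $I_\lambda$. First, I would verify the two geometric conditions of the Mountain Pass theorem. The inequality $I_\lambda(u)\geq\alpha>0$ on a small sphere $\|u\|_{E(\R^3)}=r$ is a routine consequence of the Sobolev embedding, since $q+1>2$. For the existence of some $v\in E(\R^3)$ with $I_\lambda(v)<0$, the case $q>3$ is easy (dilate any compactly supported $u\neq 0$), while for $q=3$ I would apply the scaling $u_t(x)=t^{3/2}u(tx)$ to a function $u\in\Cinfcomp(\R^3)$: a direct computation gives $\int u_t^4\sim t^3$ whereas the double integral in \eqref{double integral}, call it $D(u_t)$, satisfies $D(u_t)\sim t$ as $t\to\infty$, so $I_\lambda(su_t)\to-\infty$ along a suitable ray. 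This produces the Mountain Pass level $c_\lambda>0$ together with a Palais-Smale sequence $(u_n)$.

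Next I would show that every such Palais-Smale sequence is bounded in $E(\R^3)$. The key identity
\[I_\lambda(u_n)-\frac{1}{q+1}\langle I_\lambda'(u_n),u_n\rangle=\Bigl(\frac{1}{2}-\frac{1}{q+1}\Bigr)\|u_n\|_{H^1(\R^3)}^2+\Bigl(\frac{1}{4}-\frac{1}{q+1}\Bigr)\lambda^2 D(u_n)\]
has strictly positive coefficients for $q>3$, yielding at once a bound on $\|u_n\|_{E(\R^3)}$. When $q=3$ the second coefficient vanishes, so one only obtains $\|u_n\|_{H^1(\R^3)}$ bounded; a second use of $\langle I_\lambda'(u_n),u_n\rangle=o(\|u_n\|_{E(\R^3)})$ together with the $L^4$ Sobolev embedding then gives $\lambda^2 D(u_n)=O(1)+o(\|u_n\|_{E(\R^3)})$, and inserting this into $\|u_n\|_{E(\R^3)}^2=\|u_n\|_{H^1(\R^3)}^2+\lambda\sqrt{D(u_n)}$ closes the bound via a quadratic inequality. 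Up to a subsequence $u_n\rightharpoonup u$ in $E(\R^3)$ and, by the compact embedding, $u_n\to u$ strongly in $L^{q+1}(\R^3)$; standard weak continuity of the Poisson term (via \eqref{HLS intro} and dominated convergence) then lets me pass to the limit in $I_\lambda'(u_n)\to 0$, producing $I_\lambda'(u)=0$. If the limit $u$ were zero then $\int|u_n|^{q+1}\to 0$ and $\langle I_\lambda'(u_n),u_n\rangle\to 0$ would force $\|u_n\|_{E(\R^3)}\to 0$, contradicting $I_\lambda(u_n)\to c_\lambda>0$; hence $u\neq 0$, and by working with the truncated nonlinearity $(u^+)^q$ together with elliptic regularity and the strong maximum principle one obtains the positive Mountain Pass solution.

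To produce a groundstate, set $c_g:=\inf\{I_\lambda(v):v\in E(\R^3)\setminus\{0\},\ I_\lambda'(v)=0\}$. Every nontrivial critical point $v$ satisfies the Nehari identity $\|v\|_{H^1(\R^3)}^2+\lambda^2 D(v)=\int|v|^{q+1}$, which combined with Sobolev yields a universal lower bound $\|v\|_{H^1(\R^3)}\geq\sigma>0$. Take a minimising sequence $(v_n)$ of critical points at level $c_g$: the boundedness argument of the previous paragraph applies verbatim, so up to a subsequence $v_n\rightharpoonup v_0$ in $E(\R^3)$ and $v_n\to v_0$ in $L^{q+1}(\R^3)$; the uniform lower bound on $\|v_n\|_{H^1(\R^3)}$ combined with Sobolev forces $\int|v_0|^{q+1}>0$, hence $v_0\neq 0$. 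Weak lower semicontinuity of the quadratic and nonlocal terms then gives $I_\lambda(v_0)\leq\liminf I_\lambda(v_n)=c_g$, so the infimum is attained and positivity is argued as in the Mountain Pass step. Finally, for $q>3$ the fibering map $t\mapsto I_\lambda(tu)$ has a unique positive critical point for every admissible $u$, since $t^{q+1}\int|u|^{q+1}$ eventually dominates both $t^2\|u\|_{H^1(\R^3)}^2$ and $t^4\lambda^2 D(u)$; the Nehari manifold is then a $C^1$ constraint, the standard fibering argument identifies $c_\lambda$ with $\inf_{\mathcal{N}}I_\lambda$, and since every nontrivial critical point lies on $\mathcal{N}$ this proves $c_g=c_\lambda$.

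The main obstacle will be the exponent $q=3$. The Poisson term $\frac{\lambda^2}{4}D(u)$ and the nonlinear term $\frac{1}{q+1}\int|u|^4$ share the same quartic scaling, so the fibering map $t\mapsto I_\lambda(tu)$ may fail to have a unique positive critical point, the Nehari manifold loses its clean variational structure, and one cannot hope to identify $c_\lambda$ with $c_g$ by the usual Nehari argument (which is exactly why the coincidence of the two levels is only claimed for $q>3$). As a consequence the groundstate has to be produced via direct minimisation over the critical set, and the boundedness of the Palais-Smale and minimising sequences cannot rely on the Ambrosetti-Rabinowitz gain $\frac{1}{q+1}$ alone but requires the bootstrap on $D(v_n)$ described above.
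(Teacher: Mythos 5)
Your overall architecture (mountain-pass geometry, compact embedding, boundedness of Palais--Smale sequences, direct minimisation over the critical set, Nehari identification for $q>3$) is sound and closely mirrors the paper's strategy. The boundedness bootstrap for $q=3$, the nontriviality argument, and the $q>3$ identification $c_g=c_\lambda$ are all essentially what the paper does.

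However, there is a genuine gap in the positivity of the groundstate for $q=3$. You produce $v_0$ by minimising $I_\lambda$ over the set of nontrivial critical points, and then say positivity ``is argued as in the Mountain Pass step.'' But the mountain-pass step used the truncated functional $I_+$, which is not available here: $v_0$ was extracted from a sequence of honest (possibly sign-changing) critical points of $I_\lambda$, so there is no reason for $v_0\geq 0$ to hold. Replacing $v_0$ by $|v_0|$ preserves the energy and keeps you on the Nehari manifold, but it does \emph{not} automatically produce another critical point: for that one needs to know that $|v_0|$ is a constrained critical point of $I_\lambda|_{\mathcal N_\lambda}$, and then invoke the fact that $\mathcal N_\lambda$ is a natural constraint. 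Your direct minimisation over the critical set does not establish that $v_0$ (or $|v_0|$) minimises over $\mathcal N_\lambda$, only over the smaller set of critical points, so this step does not close. The paper avoids the issue precisely by minimising over $\mathcal N_\lambda$ (via Ekeland) rather than over the critical set: the Ekeland minimiser $w$ lies on $\mathcal N_\lambda$ and minimises there, hence so does $|w|$, and the natural-constraint property ($G_\lambda'(u)(u)<0$ on $\mathcal N_\lambda$ for $q\geq 3$, cf.\ the proof of Theorem \ref{groundstate sol}) makes $|w|$ a nonnegative critical point; the strong maximum principle then gives positivity, and since critical points all live in $\mathcal N_\lambda$, $I_\lambda(|w|)=\inf_{\mathcal N_\lambda} I_\lambda\le c_g$, so $|w|$ is a groundstate.

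Relatedly, your closing remark that for $q=3$ ``the Nehari manifold loses its clean variational structure'' is imprecise and points to the wrong obstruction. The Nehari manifold is still a perfectly good $C^1$ natural constraint for $q=3$ (the paper verifies $G_\lambda'(u)(u)<0$ on $\mathcal N_\lambda$ for all $q\geq 3$). What fails at $q=3$ is the \emph{ray parametrisation}: for some $u\neq 0$ the quartic Poisson term can dominate the quartic power term, so the ray $t\mapsto tu$ never meets $\mathcal N_\lambda$. This is why $c_\lambda=\inf_{\mathcal N_\lambda}I_\lambda$ is not obviously available, not because the constraint itself degenerates. Keeping the natural-constraint property in play is exactly what you need to repair the positivity gap above.
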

Note that in this case the compact embedding result provided by Lemma \ref{compactembeddingofeintolpplus} allows us to have a `variationally' stronger result for $q=3,$ to be compared with Theorem \ref{groundstate sol}. Namely, we can show that the mountain-pass level is critical, using that the Palais-Smale condition is satisfied under \ref{coercive_rho} and for $3\leq q<5$. A positive mountain-pass solution, which may not be a groundstate for $q=3,$ is constructed as a strong limit of a Palais-Smale sequence living nearby the positive cone in $E(\R^3)$. \newline

When dealing with the range $q\in(2,3),$ we overcome the possible unboundedness of the Palais-Smale sequences, combining tools developed in this paper, with the approach of Jeanjean and Tanaka \cite{Jeanjean and Tanaka}. Roughly speaking, the proof is based on constructing a sequence $(u_n)_{n\in \mathbb N}$ of critical points to suitable approximated functionals
$$
	I_{n}(u) = \frac{1}{2}\int_{\R^N}(|\nabla u|^2 + u^2)+\frac{\lambda^2}{4} \int_{\R^N}\rho(x) \phi_u u^2 -\frac{\mu_n}{q+1}\int_{\R^N}|u|^{q+1},
$$
which accumulates around the desired solution when letting $\mu_n\rightarrow 1^-$ as a result of satisfying a Pohozaev-type condition stated in Lemma \ref{pohozaevlemma} (which guarantees its boundedness), and by the compactness property provided by Lemma \ref{compactembeddingofeintolpplus}. More precisely, we have the following. 

\begin{theorem}[\textbf{Groundstates for $q<3$ under \ref{coercive_rho}}]\label{mountainpasssoltheoremlowp}
Let $N=3,4,5$, $q \in (2, 3)$ if $N=3$ and $q\in(2,\tstar-1)$ if $N=4,5$. Let $\lambda>0,$ and assume $\rho \in L^\infty_{\textrm{loc}}(\R^N)\cap W^{1,1}_{loc}(\R^N)$ is nonnegative and satisfies \ref{coercive_rho}. Moreover suppose that $k\rho (x) \leq (x,\nabla \rho)$ for some $k>\frac{-2(q-2)}{(q-1)}$. Then, \eqref{SP one equation} has a mountain-pass solution $u\in E(\R^N).$  Moreover, there exists a groundstate solution.
\end{theorem}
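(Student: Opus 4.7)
The plan is to address the possibly unbounded Palais--Smale sequences (an open issue for $q\in(2,3)$ when $N=3$) via the monotonicity trick of Jeanjean and Tanaka. Introduce the family
$$
I_{\mu}(u) = \frac{1}{2}\intrn (|\nabla u|^2 + u^2)\,\dif x + \frac{\lambda^2}{4}\intrn \rho(x)\phi_u u^2\,\dif x - \frac{\mu}{q+1}\intrn |u|^{q+1}\,\dif x,
$$
with $\mu$ ranging in a compact interval $J\subset(0,1]$ containing $1$. A direct inspection shows that $I_\mu$ inherits from $I_\lambda=I_1$ a mountain-pass geometry uniform in $\mu\in J$: there exist $r,\delta>0$ and $e\in E(\R^N)$, independent of $\mu\in J$, such that $I_\mu(u)\geq \delta$ whenever $\|u\|_{E(\R^N)}=r$, and $I_\mu(e)<0$. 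The associated mountain-pass level $c_\mu$ is therefore well-defined, and the map $\mu\mapsto c_\mu$ is nonincreasing.

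First, I would apply Jeanjean's abstract monotonicity principle to produce, for almost every $\mu\in J$, a bounded Palais--Smale sequence for $I_\mu$ at level $c_\mu$. Thanks to the compact embedding $E(\R^N)\hookrightarrow L^{q+1}(\R^N)$ granted by Lemma \ref{compactembeddingofeintolpplus} under \ref{coercive_rho}, such a bounded sequence is relatively compact, yielding a nontrivial critical point $u_\mu\in E(\R^N)$ of $I_\mu$ with $I_\mu(u_\mu)=c_\mu.$

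Next, select a sequence $\mu_n\to 1^-$ of admissible values and let $u_n$ denote the corresponding critical points. The principal obstacle is to establish the uniform boundedness of $(u_n)_{n\in\N}$ in $E(\R^N)$. I would combine three identities: the energy bound $I_{\mu_n}(u_n)=c_{\mu_n}\leq c_{\bar\mu}$ with $\bar\mu=\min J$, the Nehari identity $\inner{I_{\mu_n}'(u_n)}{u_n}=0$, and the Pohozaev-type relation of Lemma \ref{pohozaevlemma}. The nonlocal term $\intrn (x,\nabla \rho)\phi_{u_n}u_n^2\,\dif x$ produced by Pohozaev is controlled by the structural hypothesis $k\rho(x)\leq (x,\nabla\rho(x))$, and a suitable linear combination of the three relations yields an estimate of the form
$$
\alpha \intrn (|\nabla u_n|^2 + u_n^2)\,\dif x + \beta \intrn \rho\,\phi_{u_n}u_n^2\,\dif x \leq C,
$$
with $\alpha,\beta>0$ precisely when $k>-2(q-2)/(q-1)$, which is the sharp threshold under which the coefficients carry the right sign.

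Once boundedness is in hand, Lemma \ref{compactembeddingofeintolpplus} gives, up to a subsequence, $u_n\weakly u$ in $E(\R^N)$ and $u_n\to u$ strongly in $L^{q+1}(\R^N)$. Passing to the limit in $I_{\mu_n}'(u_n)=0$ (strong convergence controls the nonlinear term, while the nonlocal term is continuous along weakly convergent sequences in $E(\R^N)$) yields $I_\lambda'(u)=0$, and non-triviality follows from the lower bound $I_{\mu_n}(u_n)\geq \delta>0$ together with strong $L^{q+1}$-convergence. Right-continuity of $\mu\mapsto c_\mu$ at $\mu=1$, a consequence of the uniform mountain-pass geometry on $J$, delivers $I_\lambda(u)=c_1$, so $u$ is a mountain-pass solution. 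A groundstate is then extracted by minimising $I_\lambda$ over the (nonempty) set of nontrivial critical points: any minimising sequence $(v_k)$ satisfies $\inner{I_\lambda'(v_k)}{v_k}=0$ and, via the same Pohozaev/Nehari combination, is uniformly bounded in $E(\R^N)$, so Lemma \ref{compactembeddingofeintolpplus} furnishes the strong convergence required to reach a nontrivial minimiser.
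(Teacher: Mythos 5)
Your proposal follows essentially the same route as the paper: Jeanjean--Tanaka perturbed functionals $I_\mu$, bounded Palais--Smale sequences for a.e.\ $\mu$, critical points $u_n$ of $I_{\mu_n}$ obtained via the compact embedding of Lemma \ref{compactembeddingofeintolpplus}, boundedness of $(u_n)$ from the $3\times 3$ system combining energy, Nehari, and the Pohozaev inequality of Lemma \ref{pohozaevlemma} (with the threshold $k>-2(q-2)/(q-1)$ making the coefficients nonnegative), and finally the groundstate by minimising over nontrivial critical points and repeating the boundedness argument.

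Two imprecisions worth correcting. First, since $\mu_n\nearrow 1$, what you need is \emph{left}-continuity of $\mu\mapsto c_\mu$ at $\mu=1$, which the paper takes from Ambrosetti--Ruiz's Lemma 2.2; calling it right-continuity (and attributing it to the uniform mountain-pass geometry alone) is a slip, as this property is not trivial. Second, the quadratic nonlocal term $u\mapsto\int\rho\,\phi_u u^2$ is only weakly lower semicontinuous, not weakly continuous along weakly convergent sequences in $E(\R^N)$, so as stated your passage to the limit in $I_{\mu_n}'(u_n)=0$ and the identification $I_\lambda(u)=c_1$ need a further step: either argue as in Bonheure--Mercuri (which the paper cites) to upgrade to strong $E(\R^N)$-convergence, or combine the Nehari identities for $u_n$ and $u$ with strong $L^{q+1}$-convergence and weak lower semicontinuity of both $\|\cdot\|_{H^1}^2$ and $\int\rho\,\phi_u u^2$ to force norm convergence and hence $I_{\mu_n}(u_n)\to I_\lambda(u)$.
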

	\begin{remark} 
	The same proof when working instead with the functional
	\begin{equation*}
	I_{+} (u) = \frac{1}{2} \int_{\R^N} \left( | \nabla u |^2 + u^2 \right) + \frac{\lambda^2}{4} \int_{\R^N}\rho(x) \phi_u u^2 - \frac{1}{q+1} \int_{\R^N} u_+^{q+1},
\end{equation*}
 allows to show that mountain-pass and groundstate critical points exist for this functional, and are positive by construction.
\end{remark}

Under \ref{coercive_rho} and for $q\leq3$, the relation between mountain-pass solutions and groundstates found in Theorem \ref{existenceandleastenergy} for $q=3$ and Theorem \ref{mountainpasssoltheoremlowp} for $q<3$ seems not obvious; in particular, it is not clear whether they actually coincide. We are able to get more insight about the variational nature of these solutions in the case $\rho$ is homogeneous of a suitable order $\bar{k}>0$, as shown in the following theorem. It is worth pointing out that this homogeneity condition is not compatible with $\rho$ vanishing on a region.
\begin{theorem}[\textbf{Homogeneous case for $q\le 3:$ mountain-pass solutions vs. groundstates}]\label{least energy pure homog rho}
Let $N=3,4,5$, $q \in (2, 3]$ if $N=3$ and $q\in(2,\tstar-1)$ if $N=4,5$. Suppose $\lambda > 0$ and $\rho \in L^\infty_{\textrm{loc}}(\R^N)\cap W^{1,1}_{loc}(\R^N)$ is nonnegative, satisfies \ref{coercive_rho}, and is homogeneous of degree $\bar{k}$, namely $ \rho(tx)=t^{\bar{k}}\rho(x)$ for all $t>0$, for some 
	$$\bar{k}>\left(\max\left\{\frac{N}{4},\frac{1}{q-1}\right\}\cdot(3-q)-1\right)_+.$$ 
Then, the mountain-pass solutions that we find in Theorem \ref{existenceandleastenergy} $(q=3)$ and Theorem \ref{mountainpasssoltheoremlowp} $(q<3)$ are groundstates.
\end{theorem}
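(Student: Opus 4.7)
The plan is to show that every nontrivial critical point $u\in E(\R^N)$ of $I_\lambda$ satisfies $I_\lambda(u)\ge c_\lambda$, where $c_\lambda$ is the mountain-pass level appearing in Theorem \ref{existenceandleastenergy} and Theorem \ref{mountainpasssoltheoremlowp}. Since the mountain-pass solutions found there already attain $c_\lambda$, they must then coincide with the groundstate energy level. The key tool is a two-parameter scaling $u_t(x)=t^\alpha u(t^\beta x)$ adapted to the homogeneity of $\rho$.

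Using $\rho(tx)=t^{\bar k}\rho(x)$, a direct change of variables gives
\[
h(t):=I_\lambda(u_t)=\frac{t^A}{2}\int_{\R^N}|\nabla u|^2+\frac{t^B}{2}\int_{\R^N}u^2+\frac{\lambda^2 t^C}{4}\int_{\R^N}\rho\,\phi_u u^2-\frac{t^D}{q+1}\int_{\R^N}|u|^{q+1},
\]
with $A=2\alpha+(2-N)\beta$, $B=2\alpha-N\beta$, $C=4\alpha-(2\bar k+N+2)\beta$, $D=(q+1)\alpha-N\beta$. The first task is to choose $\alpha,\beta>0$ so that $A,B,C>0$ and $D>\max\{A,B,C\}$, giving $h$ the mountain-pass shape ($h(0)=0$, $h>0$ near $0$, $h(t)\to-\infty$). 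Writing the inequalities in the ratio $\beta/\alpha$, this amounts to
\[
\frac{3-q}{2(\bar k+1)}<\frac{\beta}{\alpha}<\min\!\left\{\frac{2}{N},\ \frac{4}{2\bar k+N+2},\ \frac{q-1}{2}\right\}.
\]
The hypothesis $\bar k>(\max\{N/4,1/(q-1)\}(3-q)-1)_+$ is exactly what makes this window non-empty: the first upper bound yields $\bar k>N(3-q)/4-1$ and the third yields $\bar k>(3-q)/(q-1)-1$, while the middle one produces no new restriction on $\bar k$ in the range $q\in(2,3]$.

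Next I would verify that $t=1$ is the unique maximum of $h$ whenever $u$ is a nontrivial critical point. The Nehari identity and the Pohozaev identity (obtained by multiplying \eqref{SP one equation} by $x\cdot\nabla u$, using Euler's identity $x\cdot\nabla\rho=\bar k\rho$ from the homogeneity of $\rho$, and the standard Pohozaev identity for $-\Delta\phi_u=\rho u^2$) read
\begin{align*}
\|\nabla u\|_2^2+\|u\|_2^2+\lambda^2\int_{\R^N}\rho\phi_u u^2 &=\|u\|_{q+1}^{q+1},\\
\tfrac{N-2}{2}\|\nabla u\|_2^2+\tfrac{N}{2}\|u\|_2^2+\tfrac{(2\bar k+N+2)\lambda^2}{4}\int_{\R^N}\rho\phi_u u^2 &=\tfrac{N}{q+1}\|u\|_{q+1}^{q+1}.
\end{align*}
The combination $\alpha\cdot(\text{Nehari})+(-\beta)\cdot(\text{Pohozaev})$ gives exactly $h'(1)=0$. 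Uniqueness follows by rewriting $h'(t)=0$ as
\[
\frac{A}{2}\|\nabla u\|_2^2\,t^{A-D}+\frac{B}{2}\|u\|_2^2\,t^{B-D}+\frac{C\lambda^2}{4}\int_{\R^N}\rho\phi_u u^2\,t^{C-D}=\frac{D}{q+1}\|u\|_{q+1}^{q+1};
\]
since $A-D,B-D,C-D<0$ and all integral coefficients are positive (for $u\not\equiv 0$ with $\rho$ not identically zero on its support), the left-hand side is strictly decreasing in $t$ from $+\infty$ to $0$, forcing a unique positive zero.

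Finally, choosing $T>1$ large enough that $h(T)<0$, the path $\gamma(s):=u_{sT}$ for $s\in(0,1]$, extended by $\gamma(0)=0$ (continuous in $E(\R^N)$ because $A,B,C>0$ imply $\|u_t\|_{E(\R^N)}\to 0$ as $t\to 0^+$), belongs to the admissible class of mountain-pass paths, and by the preceding step $\max_{s\in[0,1]}I_\lambda(\gamma(s))=h(1)=I_\lambda(u)$; hence $I_\lambda(u)\ge c_\lambda$. Applied to any nontrivial critical point, this shows the mountain-pass solutions from Theorem \ref{existenceandleastenergy} ($q=3$) and Theorem \ref{mountainpasssoltheoremlowp} ($q<3$) are groundstates. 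The main technical obstacle is the first step: the feasibility window for $\beta/\alpha$ is a minimum of several upper bounds, and the sharp dependence on $\bar k$ stated in the hypothesis emerges only by identifying precisely which of them is binding in each regime of $(N,q,\bar k)$.
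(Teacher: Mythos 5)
Your argument is correct and is essentially the paper's, just packaged more directly. The paper first proves Proposition \ref{variational characterisation MP level low q}, which identifies $c_{\lambda}=\inf_{\bar{\mathcal M}_{\lambda,\nu}}I_{\lambda}=\inf_{u\neq0}\max_{t\ge0}I_{\lambda}(t^{\nu}u(t\cdot))$ (using Lemma \ref{derivatives of f} for the uniqueness of the maximizer, and the $\bar k$-window to make the exponent $\nu$ exist), and then Theorem \ref{least energy pure homog rho} drops out in two lines because every critical point lies on $\bar{\mathcal M}_{\lambda,\nu}$. Your two-parameter scaling $t^{\alpha}u(t^{\beta}\cdot)$ reduces to the paper's $s^{\nu}u(s\cdot)$ under the substitution $s=t^{\beta}$, $\nu=\alpha/\beta$, so the feasibility window you derive in $\beta/\alpha$ is the reciprocal of the paper's window in $\nu$, and your observation that the middle constraint is never binding is correct. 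Your uniqueness step (dividing $h'(t)=0$ by $t^{D-1}$ to get a strictly decreasing LHS) is an equally valid replacement for the paper's concavity argument in Lemma \ref{derivatives of f}. The one point you pass over quickly: for $q<3$ the mountain-pass level is $c_{1,\lambda}$ built over $\Gamma_{\lambda}$, whose endpoint condition is $I_{\frac12,\lambda}(\gamma(1))<0$, not $I_{\lambda}(\gamma(1))<0$; but since the $|u|^{q+1}$-term still dominates with coefficient $\mu=\frac12$, taking $T$ slightly larger handles this, so no real gap. Also, for the coefficient of $t^{C}$ to be strictly positive (hence $h(0)=0$) one should note that coercivity of a homogeneous $\rho$ forces $\rho>0$ a.e., so the Coulomb term never vanishes for $u\not\equiv0$ — you only remark on this parenthetically.
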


We prove the above theorem analysing some relevant scaling properties of $I_{\lambda}$ in Proposition \ref{variational characterisation MP level low q}, which allows us to characterise the mountain-pass level in terms of the infimum over a certain manifold, defined as a suitable combination of the Nehari and Pohozaev identities. We believe that this manifold is a natural constraint.  We point the reader to Remark \ref{k bound}, in which we give an explanation of the lower bound assumption on $\bar{k}.$ 
	
In the spirit of Ambrosetti-Rabinowitz \cite{Ambrosetti and Rabinowitz} and under \ref{coercive_rho} we show that  \eqref{main SP system multiplicity}  possesses infinitely many high energy solutions. In our context it seems appropriate to distinguish the cases $q \in (3,5)$ and $q \in (2,3]$ when working within the Ljusternik-Schnirelman theory. Since for $q \in (3,5)$ Lemma \ref{compactembeddingofeintolpplus} implies that the Palais-Smale condition is satisfied, we can use the $\mathbb Z_2$-equivariant Mountain-Pass theorem, adapting to $E(\R^N)$  arguments similar to those developed for a different functional setting by Szulkin; see \cite{Szulkin}. To this aim, in Lemma \ref{separability} we prove that for $N\geq 3$ $E(\R^N)$ is a separable Banach space, by constructing a suitable linear isometry of $E(\R^N)$ onto the Cartesian product of $H^1(\R^N)$ with some of the mixed norm Lebesgue spaces studied by Benedek and Panzone \cite{Benedek Panzone}, namely $L^4(\R^N;L^2(\R^N)).$ As a consequence of this identification, we can show that $E(\R^N)$ admits a Markushevic basis, that is a set of elements $\{(e_m,e_m^*)\}_{m\in\N}\subset E(\R^N) \times E^*(\R^N)$ such that the duality product $<e_n,e_m^*>=\delta_{nm}$ for all $n,m\in\N$, the $e_m$'s are linearly dense in $E(\R^N)$, and the weak$^*$-closure of span$\{e_m^*\}_{m \in\N}$ is $E^*(\R^N)$. We use this, combined with Lemma \ref{compactembeddingofeintolpplus} to obtain lower bounds on the energy which allow us to show the divergence of a sequence of min-max critical levels defined by means of the classical notion of Krasnoselskii genus; see Lemma \ref{divergence of critical levels} below.  This yields the following 
\begin{theorem}[\textbf{Infinitely many high energy solutions for $q>3$}]\label{first multiplicity theorem}
    Let $N = 3,$ $q\in(3,\tstar -1)$ and $\lambda > 0.$ Suppose  $\rho\in L^{\infty}_{loc}(\R^3)$ is nonnegative and satisfies \ref{coercive_rho}. Then, there exist infinitely many distinct pairs of critical points $\pm u_m\in E(\R^N)$, $m\in \N$, for $I_{\lambda}$
	such that $I_{\lambda}(u_m)\to+\infty$ as $m\to+\infty$.
\end{theorem}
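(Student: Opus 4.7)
The plan is to apply a symmetric ($\mathbb{Z}_2$-equivariant) Mountain-Pass theorem in the spirit of \cite{Szulkin} to the even functional $I_\lambda$, using min-max levels defined through the Krasnoselskii genus. Since $E(\R^3)$ is not Hilbertian, the role usually played by a Hilbert basis will be taken by the Markushevic basis $\{(e_j,e_j^*)\}_{j\in\N}\subset E(\R^3)\times E^*(\R^3)$ produced by Lemma \ref{separability}.

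First I would verify the symmetric mountain-pass geometry: $I_\lambda$ is even with $I_\lambda(0)=0$; the embedding $E(\R^3)\hookrightarrow H^1(\R^3)\hookrightarrow L^{q+1}(\R^3)$ and nonnegativity of the Coulomb term give $I_\lambda(u)\geq\tfrac12\|u\|_{H^1}^2-C\|u\|_{H^1}^{q+1}$, so $I_\lambda\geq\alpha>0$ on some small sphere $\{\|u\|_E=r\}$; and on any finite-dimensional $V\subset E(\R^3)$, equivalence of norms together with $q+1>4$ makes the $L^{q+1}$ term dominate both the quadratic and the quartic Coulomb contributions, so $I_\lambda(u)\to-\infty$ as $\|u\|_E\to\infty$ in $V$.

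Next I would establish the Palais-Smale condition at every level, where the assumption $q>3$ is critical. For a Palais-Smale sequence $(u_n)$ at level $c$,
\begin{equation*}
(q+1)I_\lambda(u_n)-\langle I_\lambda'(u_n),u_n\rangle=\frac{q-1}{2}\|u_n\|_{H^1}^2+\frac{q-3}{4}\lambda^2\int_{\R^3}\rho\,\phi_{u_n}u_n^2,
\end{equation*}
and both coefficients are strictly positive; combined with Young's inequality this yields boundedness of $(u_n)$ in $E(\R^3)$. Along a subsequence $u_n\weakly u$ in $E(\R^3)$; the compact embedding $E(\R^3)\hookrightarrow L^{q+1}(\R^3)$ of Lemma \ref{compactembeddingofeintolpplus} gives $u_n\to u$ strongly in $L^{q+1}(\R^3)$, and standard arguments testing $\langle I_\lambda'(u_n)-I_\lambda'(u),u_n-u\rangle\to 0$ upgrade this to strong convergence in $E(\R^3)$.

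Finally, for each $m\in\N$ I would define
\begin{equation*}
\Gamma_m:=\{A\subset E(\R^3)\setminus\{0\}:\ A\text{ compact, }A=-A,\ \gamma(A)\geq m\},\qquad c_m:=\inf_{A\in\Gamma_m}\sup_{u\in A}I_\lambda(u),
\end{equation*}
with $\gamma$ the Krasnoselskii genus. Intersecting an $m$-dimensional subspace of $E(\R^3)$ with a large sphere gives elements of $\Gamma_m$ on which $\sup I_\lambda<\infty$, so each $c_m$ is finite; combined with the Palais-Smale condition and the standard equivariant deformation lemma, each $c_m$ is a critical value of $I_\lambda$. The main obstacle is to show $c_m\to+\infty$, which is the content of Lemma \ref{divergence of critical levels}. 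The plan there is to exploit the Markushevic basis by setting $E_m:=\{u\in E(\R^3):\langle u,e_j^*\rangle=0,\ j=1,\dots,m-1\}$; any $A\in\Gamma_m$ must meet every sphere of $E_m$ by the standard genus-intersection principle for subspaces of codimension strictly less than $m$, so $c_m\geq\inf\{I_\lambda(u):u\in E_m,\ \|u\|_E=r\}$ for a suitable radius $r$. The compact embedding of Lemma \ref{compactembeddingofeintolpplus} combined with the biorthogonality of the basis forces $\mu_m:=\sup\{\|u\|_{L^{q+1}(\R^3)}:u\in E_m,\ \|u\|_E\leq 1\}\to 0$ as $m\to\infty$, and optimizing the radius $r$ in the resulting lower bound $I_\lambda(u)\geq\tfrac12\|u\|_{H^1}^2-\tfrac{\mu_m^{q+1}}{q+1}\|u\|_E^{q+1}$ on $E_m$ yields $c_m\to+\infty$. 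The $\mathbb{Z}_2$-equivariant Mountain-Pass theorem then provides infinitely many distinct pairs $\pm u_m$ with $I_\lambda(u_m)=c_m\to+\infty$, which is the desired conclusion.
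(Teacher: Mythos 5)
Your overall strategy (symmetric mountain-pass with Krasnoselskii genus, Markushevic basis from Lemma~\ref{separability}, Palais--Smale via Lemma~\ref{compactembeddingofeintolpplus}, divergence of levels) follows the paper, but the specific min-max classes you propose are too broad, and this creates a genuine gap. You set $\Gamma_m=\{A\subset E(\R^3)\setminus\{0\}:A\text{ compact, }A=-A,\ \gamma(A)\geq m\}$ and $c_m=\inf_{A\in\Gamma_m}\sup_A I_\lambda$. For any $\varepsilon>0$, the sphere $\varepsilon S^{m-1}$ in a fixed $m$-dimensional subspace is compact, symmetric, has genus $m$, and avoids the origin, yet $\sup_{\varepsilon S^{m-1}} I_\lambda\to 0$ as $\varepsilon\to 0$. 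Hence $c_m\le 0$ for every $m$ and the levels cannot diverge; the definition as written is the one appropriate for Clark's theorem (functionals bounded below, small negative levels), not for producing high-energy solutions of an unbounded functional.

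The step that fails concretely is the claimed intersection property: you assert that any $A\in\Gamma_m$ ``must meet every sphere of $E_m$'' for $E_m$ of codimension $m-1$. The genus-intersection lemma only gives $A\cap E_m\neq\emptyset$ (if $A\cap E_m=\emptyset$, the odd continuous map $u\mapsto(\langle u,e_1^*\rangle,\dots,\langle u,e_{m-1}^*\rangle)$ does not vanish on $A$, forcing $\gamma(A)\le m-1$); it does not let you prescribe the norm of the intersection point, and the tiny-sphere example shows it cannot. This is precisely why the paper does not use the bare genus class: in Theorem~\ref{Ambro Rab Mult} the admissible $K$ are required to satisfy $\gamma(K\cap h(\partial B_1))\geq m$ for \emph{every} odd homeomorphism $h\in\Gamma^*$ with $h(B_1)\subset\hat A_0$, and the lower bound then comes from $d_m=\sup_{h\in\Gamma^*}\inf_{\partial B_1\cap E_{m-1}^\perp} I(h(\cdot))$, with Lemma~\ref{divergence of critical levels} constructing explicit homeomorphisms $H_m\in\Gamma^*$ (scaled differently on $E_m$ and $E_m^\perp$) that push $\partial B_1\cap E_{m-1}^\perp$ to high energy. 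Your idea of using the biorthogonal system and compactness to show $\mu_m\to 0$ is the right ingredient and is indeed what drives $\tilde d_m\to\infty$ in the paper's Lemma~\ref{divergence of critical levels}, but it must be fed into the Ambrosetti--Rabinowitz classes $\Gamma_m$ (or a Fountain-theorem-style class with radii $\rho_k$ built into the admissible maps) rather than the bare genus class, since only those classes enjoy the sphere-intersection property needed for the lower bound.
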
 

When $q<3$, the above construction is not directly applicable because of the possible unboundedness of the Palais-Smale sequences. Here we use a deformation lemma due to Ambrosetti and Ruiz \cite{Ambrosetti and Ruiz}, in the flavor of the work of Jeanjean and Tanaka, which is suitable for Ljusternik-Schnirelman type results. Assuming that $\rho(x)$ is homogeneous of some order $\bar{k}>0$, allows us to define as in \cite{Ambrosetti and Ruiz}, certain classes of admissible subsets of $E(\R^N)$ and hence of min-max levels; see Lemma \ref{derivatives of f} and Lemma \ref{use of curves} below. This together with the aforementioned Pohozaev type inequality (which in the present homogeneous case becomes an identity by Euler's classical theorem) and Lemma \ref{compactembeddingofeintolpplus}, allows us to show that these min-max levels are critical, and that they are arbitrarily large, by  Lemma \ref{divergence of critical levels} again. We therefore have the following.
\begin{theorem}[\textbf{Infinitely many high energy solutions for $q\leq 3$}]\label{partial multiplicity result low q}
	Let $N=3,4,5$. Assume $q\in(2,3]$ if $N=3$ and $q\in(2,\tstar-1)$ if $N=4,5$. Suppose $\lambda > 0$ and $\rho\in L^{\infty}_{loc}(\R^N)\cap W^{1,1}_{loc}(\R^N)$ is nonnegative, satisfies \ref{coercive_rho}, and is homogeneous of degree $\bar{k}$, namely, $ \rho(tx)=t^{\bar{k}}\rho(x)$ for all $t>0$, for some 
	$$\bar{k}>\left(\max\left\{\frac{N}{4},\frac{1}{q-1}\right\}\cdot(3-q)-1\right)_+.$$ 
Then, there exist infinitely many distinct pairs of critical points, $\pm u_m\in E(\R^N)$, $m\in \N$, for $I_{\lambda}$
				such that $I_{\lambda}(u_m)\to+\infty$ as $m\to+\infty$.
\end{theorem}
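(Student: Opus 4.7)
The plan is to adapt the Ambrosetti--Ruiz multiplicity scheme \cite{Ambrosetti and Ruiz} to the present nonradial weighted functional $I_{\lambda}$, replacing the radial compactness used there by the compact embedding of $E(\R^N)$ into $L^{q+1}(\R^N)$ from Lemma \ref{compactembeddingofeintolpplus}, and promoting the Pohozaev inequality of Lemma \ref{pohozaevlemma} to an \emph{identity}, which is available here thanks to the $\bar{k}$-homogeneity of $\rho$ via Euler's theorem. Since the Palais--Smale condition may fail for $q \leq 3$, the genus-based Ljusternik--Schnirelman argument used in Theorem \ref{first multiplicity theorem} must be replaced by one that produces Palais--Smale sequences carrying additional Pohozaev information, which can then be shown to be bounded.

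First, use the homogeneity of $\rho$ by introducing the scaling family $u_{t}(x) := t^{a} u(tx)$ for $u \in E(\R^N)$, with $a$ chosen so that each of the three terms in $I_{\lambda}(u_{t})$ is an explicit power of $t$; the resulting curve $t \mapsto I_{\lambda}(u_{t})$ has a unique strict maximum, and this is where the lower bound on $\bar{k}$ enters, ensuring the correct sign of the exponents. With these curves, define admissible classes $\Gamma_{m}$, $m \geq 1$, in the spirit of \cite{Ambrosetti and Ruiz}, consisting of $\mathbb{Z}_{2}$-symmetric compact subsets of $E(\R^N) \setminus \{0\}$ of Krasnoselskii genus at least $m$ whose elements satisfy a definite mountain-pass geometry along the scaling curves, and set
\[
c_{m} := \inf_{A \in \Gamma_{m}}\ \sup_{u \in A}\ I_{\lambda}(u).
\]
Non-emptiness of $\Gamma_{m}$ follows from a standard symmetric finite-dimensional reduction using the Markushevic basis of $E(\R^N)$ constructed in Lemma \ref{separability}, giving $c_{m} < +\infty$, and the divergence $c_{m} \to +\infty$ is a direct application of Lemma \ref{divergence of critical levels}.

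Next, show that each $c_{m}$ is critical. Here apply the Ambrosetti--Ruiz deformation lemma to produce at level $c_{m}$ a sequence $(u_{n})_{n \in \N} \subset E(\R^N)$ with $I_{\lambda}(u_{n}) \to c_{m}$, $I_{\lambda}'(u_{n}) \to 0$ in the dual, and additionally $P(u_{n}) \to 0$, where $P$ is the Pohozaev functional of Lemma \ref{pohozaevlemma}. Since $\rho$ is $\bar{k}$-homogeneous, Euler's theorem $(x,\nabla\rho) = \bar{k}\rho$ turns the Pohozaev inequality into an identity, and a suitable linear combination of $\inner{I_{\lambda}'(u_{n})}{u_{n}} = o(1)$ and $P(u_{n}) = o(1)$ yields an estimate of the form
\[
c_{m} + o(1) \geq \alpha_{1}\bigl(\|\nabla u_{n}\|_{L^{2}}^{2} + \|u_{n}\|_{L^{2}}^{2}\bigr) + \alpha_{2} \int_{\R^N} \rho\, \phi_{u_{n}} u_{n}^{2} \dx,
\]
with $\alpha_{1}, \alpha_{2} > 0$ precisely under the stated lower bound on $\bar{k}$. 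This pins down the boundedness of $(u_{n})$ in $E(\R^N)$. By Lemma \ref{compactembeddingofeintolpplus} a subsequence satisfies $u_{n} \weakly u_{\infty}$ in $E(\R^N)$ and $u_{n} \to u_{\infty}$ in $L^{q+1}(\R^N)$, from which a standard weak-to-strong argument yields $I_{\lambda}'(u_{\infty}) = 0$ and $I_{\lambda}(u_{\infty}) = c_{m}$. A classical genus argument then guarantees that whenever $c_{m} = c_{m+1} = \cdots = c_{m+p}$, the critical set at that level has genus $\geq p+1$, so the $u_{m}$ are geometrically distinct, and by $\mathbb{Z}_{2}$-symmetry they come in pairs $\pm u_{m}$.

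The main obstacle is the boundedness step: choosing the right exponent $a$ in the scaling $u_{t}(x) = t^{a}u(tx)$ and the correct weights in the Nehari--Pohozaev combination so that the two coefficients $\alpha_{1}, \alpha_{2}$ are strictly positive \emph{exactly} on the range of $\bar{k}$ in the hypothesis, and simultaneously verifying that the Ambrosetti--Ruiz deformation lemma delivers, in the present nonradial weighted setting, a Palais--Smale sequence satisfying the asymptotic Pohozaev relation. A secondary but nontrivial point is to check that the admissible class $\Gamma_{m}$ is stable under the odd deformation flow used in the genus argument and yields the required mountain-pass geometry uniformly along the scaling curves.
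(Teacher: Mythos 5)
Your plan follows the paper's broad strategy — Ambrosetti--Ruiz scheme, compact embedding from Lemma~\ref{compactembeddingofeintolpplus}, scaling curves exploiting the $\bar{k}$-homogeneity of $\rho$, Pohozaev promoted to an identity by Euler's theorem, and divergence of levels via Lemma~\ref{divergence of critical levels}. However, there is a genuine gap in the boundedness step as you describe it.

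You assert that the Ambrosetti--Ruiz deformation lemma directly produces, at level $c_m$ for the \emph{unperturbed} functional $I_\lambda$, a Palais--Smale sequence with the additional asymptotic Pohozaev information $P(u_n) \to 0$. That is not what the tool gives you. The Ambrosetti--Ruiz mechanism (the monotonicity trick, cf.\ Lemma~\ref{Ambro Ruiz critical levels} and Lemma~\ref{Ambro Ruiz bounded PS}) operates on the one-parameter family $I_\mu$, $\mu\in[\tfrac12,1]$: it shows that $\mu\mapsto c_{m,\mu}$ is non-increasing and left-continuous, hence a.e.\ differentiable, and that at every differentiability point $\mu$ one can extract a \emph{bounded} Palais--Smale sequence for $I_\mu$ at level $c_{m,\mu}$ — but this lemma says nothing about the Pohozaev functional, and it does not produce a Palais--Smale sequence for $I_1 = I_\lambda$ itself. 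The correct route, and the one the paper takes, inserts an essential intermediate step: for a dense set of $\mu$, use the bounded PS sequence plus the compact embedding to obtain an \emph{exact} critical point $u_\mu$ of $I_\mu$ at level $c_{m,\mu}$; then choose $\mu_n \nearrow 1$ in this dense set and apply the \emph{exact} Nehari and Pohozaev identities (the latter being an identity, not a limit, because $u_{\mu_n}$ is a genuine critical point and $\rho$ is homogeneous) to the sequence $(u_{\mu_n})$. Solving the resulting linear system — which is where your sign conditions $\alpha_1,\alpha_2>0$, i.e.\ the lower bound on $\bar{k}$, come in — shows $(u_{\mu_n})$ is bounded uniformly in $n$, and a final application of compactness plus left-continuity of $\mu\mapsto c_{m,\mu}$ yields a critical point of $I_\lambda$ at level $c_{m,1}$. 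Without the detour through exact critical points of the perturbed functionals, the Pohozaev quantity for an arbitrary Palais--Smale sequence of $I_\lambda$ is not under control, and your ``linear combination of $\langle I_\lambda'(u_n),u_n\rangle=o(1)$ and $P(u_n)=o(1)$'' step is unjustified.

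A secondary point: the Markushevic basis of Lemma~\ref{separability} enters the argument inside Lemma~\ref{divergence of critical levels} to show $d_m\to\infty$, not to establish non-emptiness of the admissible classes. Those are built concretely, as in the paper, as images $g(A_m)$ of compact cones $A_m=\{t^\nu u(t\cdot):t\in[0,T_m],\,u\in\partial B_1\cap E_m\}$, and one then needs the comparison $d_m\leq\tilde{b}_m\leq c_{m,1}$ (via \cite[Corollary 2.16]{Ambrosetti and Rabinowitz}) to transport the divergence of $d_m$ to the levels $c_{m,1}$.
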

\begin{remark}
	For $q=3$, the homogeneity assumption on $\rho$ is not used to prove the boundedness of the Palais-Smale sequences which holds for this exponent, but rather it is used in the construction of the min-max levels.
\end{remark}
\begin{remark}
	For $N=3,4$, we can cover all $\bar{k}>0$. For $N=5$, the threshold for $\bar{k}$ is sensitive to the range of $q$. Namely, if $q\in(\frac{11}{5},\tstar-1)$, we can cover all $\bar{k}>0$, however if $q\in(2,\frac{11}{5})$, this is not the case.
\end{remark}

\subsection{Outline}
The paper is organised as follows. In Section \ref{preliminaries section} we deal with general facts about the functional setting, we prove the separability of $E(\R^N)$ and other properties that will be used throughout, comprising positivity and regularity. We prove a Pohozaev type necessary condition that will be extensively used in the existence proofs, and that in this section is applied to a nonexistence result for $q = 2^{\ast}-1$. Here we also discuss the min-max setting and related properties, which hold for a generic locally bounded $\rho.$ 

In Section \ref{vanishing rho section} we work under the vanishing assumption  \ref{vanishing_rho}. Here we develop a set of uniform integral estimates which hold for all the values of $\lambda$ above certain lower thresholds. We conclude the section with the proof of the existence of groundstates, Theorem \ref{groundstate sol}, and provide with Proposition \ref{PSconditionVan} and Proposition \ref{CPSconditionVan}, some new compactness results on sequences of approximated critical points of $I_\lambda.$

Section \ref{coercive rho section} is devoted to the coercive case \ref{coercive_rho}. For any fixed arbitrary $\lambda>0$ we prove the compactness Lemma \ref{compactembeddingofeintolpplus}, and the existence results Theorem \ref{existenceandleastenergy}, Theorem \ref{mountainpasssoltheoremlowp}, and Theorem \ref{least energy pure homog rho}.

Section \ref{5.1} is entirely devoted to the multiplicity of high energy solutions. In particular, in Section \ref{prelims for multiplicity section} we prove Lemma \ref{divergence of critical levels} that is key to show later in the proofs the existence of a blowing up sequence of infinitely many distinct critical levels of high energy. In Section \ref{proof of multiplicity theorem section} we recall the notion of the Krasnoselskii genus and its properties, and deal with the proof of Theorem \ref{first multiplicity theorem}. Finally, in Section \ref{proof of multiplicity theorem low q section} we prove Theorem \ref{partial multiplicity result low q}.

\section*{Acknowledgements} The authors would like to thank the anonymous referees for the valuable and constructive comments, and thank them in particular for suggesting a simple proof of Lemma 2.6.

\section{Preliminaries}\label{preliminaries section}

\noindent We introduce the functional setting for our problem and provide a few preliminary lemmas that hold for all nonnegative $\rho \in L^\infty_{\textrm{loc}}(\R^N)$ and will be used under all assumptions on $\rho$. 

\subsection{Functional setting}
For what follows, we will need some properties of the functional setting which are contained in the next lemma. 
\begin{lemma}[{\bf Properties of $E(\R^N)$}]\label{separability}
Assume $N\geq 3,$ and $\rho\geq 0$ is a measurable function. The space $E(\R^N)$ is a separable Banach space that admits a Markushevic basis, that is a fundamental and total biorthogonal system, $\{(e_m,e_m^*)\}_{m\in\N}\subset E(\R^N) \times E^*(\R^N)$. Namely, $<e_n,e_m^*>=\delta_{nm}$ for all $n,m\in\N$, the $e_m$'s are linearly dense in $E(\R^N)$, and the weak$^*$-closure of span$\{e_m^*\}_{m \in\N}$ is $E^*(\R^N)$. 
\end{lemma}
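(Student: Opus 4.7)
The plan is to split the lemma into three independent pieces: (i) verify $\|\cdot\|_E$ is a norm and $E(\R^N)$ is complete under it, (ii) show $E(\R^N)$ is separable, (iii) invoke Markushevich's classical theorem that every separable Banach space admits a fundamental and total biorthogonal system.

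For (i), writing $A(u):=\bigl(\iint \rho(x)u^2(x)\rho(y)u^2(y)\,|x-y|^{-(N-2)}\,\dx\,\dy\bigr)^{1/2}$, only the triangle inequality is delicate. The Newton kernel $K(z)=|z|^{-(N-2)}$ is positive definite, so the bilinear form $B(f,g):=\iint f(x)g(y)K(x-y)\,\dx\,\dy$ satisfies the Cauchy--Schwarz inequality $B(f,g)^2 \le B(f,f)B(g,g)$. Expanding $\rho(u+v)^2 = \rho u^2 + 2\rho uv + \rho v^2$ and bounding the cross-terms by Cauchy--Schwarz in $B$---together with the further estimate $B(\rho uv,\rho uv)\le A(u)A(v)$, which comes from factorising $\rho uv=(\sqrt\rho u)(\sqrt\rho v)$ and applying Cauchy--Schwarz in the positive measure $K(x-y)\,\dx\,\dy$---I would obtain the clean bound $A(u+v)^2 \le \bigl(A(u)^{1/2}+A(v)^{1/2}\bigr)^4$. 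Thus $\sqrt{A(\cdot)}$ is a seminorm, and $\|u\|_E$, being the $\ell^2$-combination of the seminorms $\|\cdot\|_{H^1}$ and $\sqrt{\lambda A(\cdot)}$, is itself a norm. Completeness then follows by a routine Fatou argument: a Cauchy sequence $(u_n)\subset E$ is Cauchy in $H^1$, hence converges in $H^1$ and a.e.\ up to a subsequence; Fatou applied to the double integral defining $A$ shows the limit lies in $E$ and that $\|u_n-u\|_E\to 0$.

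For (ii) I would follow the authors' strategy and construct a linear isometry of $E(\R^N)$ onto $H^1(\R^N)\times L^4(\R^N;L^2(\R^N))$, which is a product of separable Banach spaces (in the sense of Benedek--Panzone); separability of $E$ then transports via the isometry. A more hands-on alternative, under the running assumption $\rho\in L^\infty_{\textrm{loc}}(\R^N)$, is to exhaust $E$ by the closed subspaces $E_n := \{u\in E : u\equiv 0 \text{ a.e.\ outside } \overline{B_n}\}$: on each $E_n$, the Hardy--Littlewood--Sobolev inequality and local boundedness of $\rho$ give $A(u)\le C(n)\|u\|_{H^1}^2$, so $\|\cdot\|_E\sim\|\cdot\|_{H^1}$ there and $E_n$ inherits separability from $H^1(\R^N)$; a cutoff $u\mapsto\eta_R u$ combined with dominated convergence applied both to $\|\cdot\|_{H^1}$ and to the double integral defining $A$ shows $\bigcup_n E_n$ is dense in $E$. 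Then (iii) is immediate. The main obstacle will be the bookkeeping in the triangle inequality of (i): four distinct cross-terms arise from $B(\rho(u+v)^2,\rho(u+v)^2)$, and each must be estimated by the correct power of $A(u)$ and $A(v)$---in particular the quartic cross-estimate $B(\rho uv,\rho uv)\le A(u)A(v)$ is the essential ingredient---for the full $(A(u)^{1/2}+A(v)^{1/2})^4$ expansion to materialise.
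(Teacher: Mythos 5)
Your primary route is the paper's: build the linear isometry $T:E(\R^N)\to H^1(\R^N)\times L^4(\R^N;L^2(\R^N))$, transport separability back through $T$, and invoke the Markushevich theorem for separable Banach spaces. One small slip: $T$ is an isometry \emph{into} the product, not \emph{onto} it -- its range is the proper closed subspace $\{[u,(\lambda I_1(x_2-x_1)\rho(x_1))^{1/2}u(x_1)]\}$, determined entirely by the first component -- but this costs nothing, since a subspace of a separable metric space is separable. Your part (i) (triangle inequality for $A(\cdot)^{1/2}$ via positive-definiteness of the Newton kernel, Cauchy--Schwarz for the bilinear form $B$, and the factorisation $\rho uv=(\sqrt{\rho}\,u)(\sqrt{\rho}\,v)$ giving $B(\rho uv,\rho uv)\le A(u)A(v)$; completeness by Fatou) is correct and is an honest addition -- the paper delegates this to the references \cite{RuizARMA}, \cite{Mercuri Moroz Van Schaftingen}; the cross-term bookkeeping does collapse to $\bigl(A(u)^{1/2}+A(v)^{1/2}\bigr)^4$. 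A caveat on your fallback in (ii): on $E_n$ the chain HLS plus Sobolev gives $A(u)\lesssim\|u\|^4_{L^{4N/(N+2)}}\lesssim\|u\|_{H^1}^4$ only when $4N/(N+2)\le 2^*$, i.e.\ $N\le 6$, so the claimed equivalence $\|\cdot\|_E\sim\|\cdot\|_{H^1}$ on $E_n$ is not automatic for $N\ge 7$ (and it also requires $\rho\in L^\infty_{\mathrm{loc}}$, as you note); the isometry argument sidesteps both issues and covers the full generality of the lemma.
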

\begin{proof}
Following \cite{Mercuri Moroz Van Schaftingen}, we note that we can equip $E(\R^N)$ with the equivalent norm
\begin{equation}\label{equivnorm}
\|u\|_{1}=\left(\|u\|^2_{H^1(\R^N)}+\lambda\left(\int_{\R^N}\left| I_1 \star (\sqrt{\rho}|u|)^2\right|^2\right)^{1/2}\right)^{1/2}.
\end{equation}
 Here, we have set $\alpha=1$ in $I_{\alpha}:\R^N\to\R$, the Riesz potential of order $\alpha\in(0,N)$, defined for $x\in\R^N\setminus\{0\}$ as
\[I_{\alpha}(x)=\frac{A_{\alpha}}{|x|^{N-\alpha}},\quad A_{\alpha}=\frac{\Gamma(\frac{N-\alpha}{2})}
{\Gamma(\frac{\alpha}{2})\pi^{{N}/{2}}2^{\alpha}},\]
and the choice of normalisation constant $A_{\alpha}$ ensures that the kernel $I_{\alpha}$ enjoys the semigroup property 
\begin{equation*}
I_{\alpha+\beta}=I_{\alpha}\star I_{\beta} \text{ for each } \alpha,\beta\in(0,N) \text{ such that } \alpha+\beta<N.
\end{equation*}
We first notice that the operator $T:E(\R^N)\to H^1(\R^N)\times L^4(\R^N;L^2(\R^N))$ defined by
\[(Tu)(x_0,x_1,x_2)=[u(x_0),(\lambda I_1(x_2-x_1)\rho(x_1))^{\frac{1}{2}}u(x_1)],\]
is a linear isometry from $E(\R^N)$ into the product space $H^1(\R^N)\times L^4(\R^N;L^2(\R^N)),$ endowed with the norm $$\|[u,v]\|_{\times}=\left(\|u\|^2_{H^1(\R^N)}+\|v\|_{L^4(\R^N;L^2(\R^N))}^{2}\right)^{1/2}.$$ Here $L^4(\R^N;L^2(\R^N))$ is the mixed norm Lebesgue space of  functions $v:\R^N\times\R^N\to\R$ such that
\[||v||_{L^4(\R^N;L^2(\R^N))}= \left(\int_{\R^N}\left(\int_{\R^N}|v(x_1,x_2)|^2\dif x_1\right)^2\dif x_2\right)^{{1}/{4}}<+\infty,\]
see \cite{Benedek Panzone}. Since $L^4(\R^N;L^2(\R^N))$ is a separable (see e.g. \cite[\ p.\ 107]{Pick Kufner John Fucik}) Banach space (see e.g.\ \cite{Benedek Panzone}), it follows that the linear subspace $T(E(\R^N))\subseteq H^1(\R^N)\times L^4(\R^N;L^2(\R^N)),$ and hence $E(\R^N),$ also satisfies each of these properties. Since every separable Banach space admits a Markushevic basis (see e.g. \cite{Hajek Santalucia Vanderwerff Zizler}), the proof is complete. 
\end{proof}

Reasoning as in \cite{RuizARMA} and \cite{Mercuri Moroz Van Schaftingen} it is easy to see that $C^\infty_c(\R^N)$ is dense in $E(\R^N)$ and that the unit ball in $E(\R^N)$ is weakly compact; in fact this space is uniformly convex and hence is reflexive.  The following variant to the classical Brezis-Lieb lemma will be useful to study the convergence of bounded sequences in $E(\R^N)$; see e.g. \cite{Bellazzini Frank Visciglia}, \cite{Mercuri Moroz Van Schaftingen}.
\begin{lemma}[{\bf Nonlocal Brezis-Lieb lemma}] \label{nonlocalBL}
Assume $N \geq 3$ and  $\rho(x) \in L^{\infty}_{\textrm{loc}} (\R^N)$ is nonnegative. Let $(u_n)_{n\in \mathbb N}\subset E(\R^N)$ be a bounded sequence such that $u_n \rightarrow u$ almost everywhere in $\R^N$. Then it holds that 
$$\lim_{n\rightarrow \infty} \Big[\|\nabla\phi_{u_n}\|^2_{L^2(\R^N)}-\|\nabla \phi_{(u_n-u)}\|^2_{L^2(\R^N)}\Big]=\| \nabla \phi_{u} \|^2_{L^2(\R^N)}.$$
\end{lemma}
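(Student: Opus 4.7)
My plan is to identify $\|\nabla\phi_u\|^2_{L^2}$ with the square of the norm of $\rho u^2$ in $D^{1,2}(\R^N)^*$, and then apply a Hilbert-space version of Brezis-Lieb. Indeed, since $-\Delta\phi_u=\rho u^2$ with $\phi_u\in D^{1,2}(\R^N)$, we have
\[
\|\nabla\phi_u\|^2_{L^2}=\int_{\R^N}\rho u^2\phi_u=\|\rho u^2\|^2_{*},
\]
where $\|\cdot\|_{*}$ denotes the norm in $D^{1,2}(\R^N)^*$ under the canonical identification via $(v,w)\mapsto\int\nabla v\cdot\nabla w$. The conclusion then becomes $\|\rho u_n^2\|^2_{*}-\|\rho(u_n-u)^2\|^2_{*}\to\|\rho u^2\|^2_{*}$.

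The first step would be to show $\rho u_n^2\rightharpoonup\rho u^2$ and $\rho(u_n-u)^2\rightharpoonup 0$ in $D^{1,2}(\R^N)^*$. Boundedness of the relevant $*$-norms follows from the assumption that $(u_n)$ is bounded in $E(\R^N)$ (for $\rho(u_n-u)^2$ one invokes the linear decomposition $\phi_{u_n}=\phi_{u_n-u}+2\psi_n+\phi_u$ arising from $u_n^2=(u_n-u)^2+2(u_n-u)u+u^2$, where $-\Delta\psi_n=\rho(u_n-u)u$, together with the triangle inequality in $D^{1,2}(\R^N)$). For testing against $\varphi\in C_c^\infty(\R^N)$, dominated convergence gives $\int\rho u_n^2\varphi\to\int\rho u^2\varphi$, using $\rho\in L^\infty_{\mathrm{loc}}$, the a.e.\ convergence of $u_n$, and the fact that $(u_n^2)$ converges in $L^1_{\mathrm{loc}}$ by Rellich-Kondrachov. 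Density of $C_c^\infty$ in $D^{1,2}(\R^N)$ then gives weak convergence.

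Next I would apply the elementary Hilbert identity $\|f_n\|^2_{*}-\|f_n-f\|^2_{*}=2\mathrm{Re}\langle f_n,f\rangle_{*}-\|f\|^2_{*}$ with $f_n=\rho u_n^2$ and $f=\rho u^2$, whose right-hand side tends to $\|\rho u^2\|^2_{*}$ by the weak convergence above. This yields $\|\rho u_n^2\|^2_{*}-\|\rho(u_n^2-u^2)\|^2_{*}\to\|\rho u^2\|^2_{*}$. To replace $\rho(u_n^2-u^2)$ by $\rho(u_n-u)^2$, I note their difference is $2\rho u(u_n-u)$, so by polarization and Cauchy-Schwarz
\[
\bigabs{\|\rho(u_n^2-u^2)\|^2_{*}-\|\rho(u_n-u)^2\|^2_{*}}\leq 2\|\rho(u_n^2-u^2)+\rho(u_n-u)^2\|_{*}\|\rho u(u_n-u)\|_{*},
\]
where the first factor is bounded by the triangle inequality; it thus suffices to show $\|\rho u(u_n-u)\|_{*}\to 0$.

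For this final step, expanding the $*$-norm as a positive double integral and applying $|ab|\leq\tfrac{1}{2}(a^2+b^2)$ to the factorization $(u_n-u)(x)u(x)\cdot(u_n-u)(y)u(y)=[(u_n-u)(x)u(y)][(u_n-u)(y)u(x)]$ gives, using symmetry of the kernel,
\[
\|\rho u(u_n-u)\|^2_{*}\leq \langle\rho(u_n-u)^2,\rho u^2\rangle_{*},
\]
and the right-hand side tends to $0$ by the weak convergence $\rho(u_n-u)^2\rightharpoonup 0$ established in the first step, since $\rho u^2$ is a fixed element of $D^{1,2}(\R^N)^*$. I anticipate the main technical step is verifying the weak convergence on $C_c^\infty$ test functions, which is routine given local Rellich-Kondrachov compactness; the rest is an algebraic chain using the Hilbert-space structure.
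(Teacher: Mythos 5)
Your proof is correct, and it is in the spirit of the argument the paper delegates to \cite{Bellazzini Frank Visciglia} and \cite{Mercuri Moroz Van Schaftingen}: recast $\|\nabla\phi_u\|^2_{L^2}$ as the squared $D^{1,2}(\R^N)^*$-norm of $\rho u^2$, establish the two weak convergences in that dual Hilbert space, and combine the parallelogram/Hilbert identity with the pointwise Coulomb estimate $D(\rho uv,\rho uv)\leq D(\rho u^2,\rho v^2)$. All four steps check out; in particular the final estimate $\|\rho u(u_n-u)\|_*^2 \leq \langle\rho(u_n-u)^2,\rho u^2\rangle_*$ is exactly what makes the error term vanish via the weak convergence $\rho(u_n-u)^2\rightharpoonup 0$, since pairing with $\rho u^2$ in $D^{1,2}(\R^N)^*$ is the same as pairing with the fixed element $\phi_u\in D^{1,2}(\R^N)$.

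One place worth tightening is the boundedness of $\|\rho(u_n-u)^2\|_*$ in Step~1. Your route via the decomposition $\phi_{u_n}=\phi_{u_n-u}+2\psi_n+\phi_u$ and the triangle inequality is slightly circular as written, since controlling $\|\psi_n\|_{D^{1,2}}=\|\rho u(u_n-u)\|_*$ already requires a bound involving $\|\rho(u_n-u)^2\|_*^{1/2}$, so one must first know the latter is finite before absorbing. A cleaner and fully a priori argument: use the pointwise bound $(u_n-u)^2\leq 2u_n^2+2u^2$ together with the monotonicity of $f\mapsto D(f,f)$ on nonnegative $f$ (the kernel is positive), which gives
$\|\rho(u_n-u)^2\|_*^2=D\bigl(\rho(u_n-u)^2,\rho(u_n-u)^2\bigr)\leq D\bigl(2\rho u_n^2+2\rho u^2,\,2\rho u_n^2+2\rho u^2\bigr)\leq 8\bigl(\|\rho u_n^2\|_*^2+\|\rho u^2\|_*^2\bigr),$
with no circularity. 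Alternatively, note that the Coulomb part of $\|\cdot\|_{E(\R^N)}$ is a genuine norm via the mixed-norm identification of Lemma~\ref{separability}, so $\|u_n-u\|_{E(\R^N)}\leq\|u_n\|_{E(\R^N)}+\|u\|_{E(\R^N)}$ is bounded (after noting $u\in E(\R^N)$ by Fatou), which directly bounds $\|\nabla\phi_{u_n-u}\|_{L^2}$. With either fix the proof is complete.
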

\noindent The next simple estimate is based on an observation of  P.-L. Lions, given in \cite{ Lions Hartree Fock} for $\rho \equiv 1$; see also \cite{RuizARMA}, and \cite{Bellazzini}, \cite{Mercuri Moroz Van Schaftingen}.
\begin{lemma}[{\bf Coulomb-Sobolev inequality}]\label{weightedL3boundlemma}
Assume $N \geq 3$, $\rho(x) \in L^{\infty}_{\textrm{loc}} (\R^N)$ is nonnegative. Then the following inequality holds for all $u \in E(\R^N)$,
\begin{equation}
    \intrn \rho (x) \abs{u}^3 \leq \left( \intrn \abs{\nabla u}^2 \right)^{\half} \left( \intrn \abs{\nabla \phi_{u}}^2 \right)^{\half}.
\end{equation}
\end{lemma}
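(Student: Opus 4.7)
The plan is to follow the classical observation of P.-L. Lions: test the Poisson equation (\ref{poissoneqn}) against $|u|$, use the Stampacchia chain rule to bound $\nabla|u|$ by $\nabla u$, and close with Cauchy--Schwarz. The only real subtlety is that $\rho|u|^3\in L^1(\R^N)$ is precisely the quantity to be bounded, and is not a priori known to be finite for arbitrary $u\in E(\R^N)$. I would bypass this by first establishing the inequality for smooth compactly supported $u$ and then extending by density, using that $C^{\infty}_c(\R^N)$ is dense in $E(\R^N)$ as recorded just after Lemma \ref{separability}.

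For $u\in C^{\infty}_c(\R^N)$ every integral in sight is manifestly finite. The function $|u|$ lies in $H^1(\R^N)\subset D^{1,2}(\R^N)$ with $|\nabla|u||\leq|\nabla u|$ pointwise a.e.\ by Stampacchia, while $\phi_u\in D^{1,2}(\R^N)$ from (\ref{definition of phi u}) is the unique weak $D^{1,2}$-solution of (\ref{poissoneqn}). Testing against $v=|u|$ yields
\begin{equation*}
\intrn \rho(x)|u|^3\,\dx \;=\; \intrn \rho(x)\, u^2\, |u|\,\dx \;=\; \intrn \nabla\phi_u\cdot\nabla|u|\,\dx,
\end{equation*}
and Cauchy--Schwarz combined with the Stampacchia bound $|\nabla|u||\leq|\nabla u|$ immediately produces the claimed estimate in this smooth case.

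For a general $u\in E(\R^N)$ I would pick an approximating sequence $u_n\in C^{\infty}_c(\R^N)$ with $u_n\to u$ in $E(\R^N)$ and, up to a subsequence, $u_n\to u$ a.e.\ on $\R^N$. The $E$-norm convergence together with identity (\ref{double integral}) forces $\|\nabla u_n\|_{L^2(\R^N)}\to\|\nabla u\|_{L^2(\R^N)}$ and $\|\nabla\phi_{u_n}\|_{L^2(\R^N)}\to\|\nabla\phi_u\|_{L^2(\R^N)}$, so the right-hand side of the smooth-case inequality evaluated along $u_n$ converges to the right-hand side for $u$. Applying Fatou's lemma to the nonnegative integrand $\rho|u_n|^3$ on the left then delivers simultaneously the a posteriori integrability $\rho|u|^3\in L^1(\R^N)$ and the full inequality. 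The main (and rather modest) obstacle is precisely this a priori integrability question, which the density plus Fatou argument resolves cleanly; everything else is one integration by parts and one application of Cauchy--Schwarz.
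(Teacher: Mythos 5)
Your proposal is correct and takes essentially the same approach as the paper: test the Poisson equation with $|u|$, use the pointwise bound $|\nabla|u||\leq|\nabla u|$, and close with Cauchy--Schwarz. The paper records this as a one-line argument, while you supply the additional density-plus-Fatou scaffolding to justify testing by the non-compactly-supported $|u|$ when only $u\in E(\R^N)$ is given; that extra care is sound (the one point worth spelling out is why $\|u_n-u\|_{E(\R^N)}\to 0$ implies $\|\nabla\phi_{u_n}\|_{L^2}\to\|\nabla\phi_u\|_{L^2}$, which follows from the triangle inequality in the mixed-norm space $L^4(\R^N;L^2(\R^N))$ via the isometry $T$ of Lemma \ref{separability}).
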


\begin{proof}
    Testing the Poisson equation \eqref{poissoneqn} with $|u|,$ the statement follows immediately by Cauchy-Schwarz inequality. 
\end{proof}

\subsection{Regularity and positivity}
Using standard elliptic regularity theory and the maximum principle, we now provide a result giving the regularity and positivity of the solutions to the Schr\"{o}dinger-Poisson system.\\

\begin{proposition}\label{reg}[{\bf Regularity and positivity}] Let $N\in[3,6]$ and $q\in[1,\tstar -1],$ $\rho\in L^\infty_{\textrm{loc}}(\R^N)$ be nonnegative and $\rho(x) \not\equiv 0$ and $(u,\phi_u)\in E(\R^N)\times D^{1,2}(\R^N)$ be a nontrivial weak solution to
\begin{equation}\label{poh system}
\left\{
\begin{array}{lll}
  - \Delta u+b u +c  \rho (x) \phi u = d|u|^{q-1} u, \qquad &x\in \R^N,  \\
  \,\,\, -\Delta \phi=\rho(x) u^2,\qquad &x\in \R^N,
\end{array}
\right.
\end{equation}
\noindent with $b, c, d \in \R_+$.
 Then, $u$, $\phi_u\in W^{2,s}_{\textrm{loc}}(\R^N),$ for every $s\geq1$, and so $u$, $\phi_u \in C^{1,\alpha}_{\textrm{loc}}(\R^N);$ moreover $\phi_u>0.$
If, in addition, $u\geq 0$, then $u>0$ everywhere.

\end{proposition}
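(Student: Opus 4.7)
My plan is to split the statement into two parts---the Sobolev/H\"older regularity of $(u,\phi_u)$ and the sign properties---and attack each by the standard elliptic template adapted to the coupled system \eqref{poh system}. For regularity, I would bootstrap alternately between the two equations. Starting from $u\in E(\R^N)\hookrightarrow H^1(\R^N)\hookrightarrow L^{2^\ast}(\R^N)$ and $\rho\in L^\infty_{\textrm{loc}}$, the source $\rho u^2$ of the Poisson equation lies in $L^{N/(N-2)}_{\textrm{loc}}$, so Calder\'on--Zygmund gives $\phi_u\in W^{2,N/(N-2)}_{\textrm{loc}}$ and, via Sobolev embedding, a first gain of integrability for $\phi_u$ (immediately $L^\infty_{\textrm{loc}}$ when $N=3$, $L^p_{\textrm{loc}}$ for every finite $p$ when $N=4$, and at least $L^{N/2}_{\textrm{loc}}$ when $N=5,6$). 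This allows one to read the equation for $u$ as $-\Delta u+bu=W(x)u$ with
\[
W(x):=-c\rho(x)\phi_u(x)+d|u(x)|^{q-1}\in L^{N/2}_{\textrm{loc}}(\R^N),
\]
the inclusion using $q\le 2^\ast-1$ together with $u\in L^{2^\ast}$ to obtain $|u|^{q-1}\in L^{2^\ast/(q-1)}_{\textrm{loc}}\subseteq L^{N/2}_{\textrm{loc}}$. A Brezis--Kato truncation argument then gives $u\in L^s_{\textrm{loc}}$ for every $s\ge 1$; feeding this back into the Poisson equation produces $\phi_u\in W^{2,s}_{\textrm{loc}}$ and in particular $\phi_u\in L^\infty_{\textrm{loc}}$; and one final Calder\'on--Zygmund application to the equation for $u$, whose right-hand side now lies in $L^s_{\textrm{loc}}$ for every $s$, yields $u\in W^{2,s}_{\textrm{loc}}$. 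Morrey's embedding $W^{2,s}\hookrightarrow C^{1,\alpha}$ with $s>N$, $\alpha=1-N/s$, closes the regularity part.

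For the sign properties, nonnegativity of $\phi_u$ is immediate from the Riesz representation \eqref{definition of phi u}. Applied to $-\Delta\phi_u=\rho u^2\ge 0$, the strong maximum principle, together with $\phi_u\in D^{1,2}(\R^N)$ vanishing at infinity, forces $\phi_u$ to be either identically zero or strictly positive; the former would entail $\rho u^2\equiv 0$, and combining this with $\rho\not\equiv 0$, the nontriviality of $u$, and strong unique continuation applied to the equation satisfied by $u$ (which on $\R^N\setminus\mathrm{supp}\,\rho$ reduces to $-\Delta u+bu=d|u|^{q-1}u$) yields a contradiction, so $\phi_u>0$ everywhere. For $u$, assuming $u\ge 0$ and $u\not\equiv 0$, the $C^{1,\alpha}_{\textrm{loc}}$ regularity just established allows the equation to be written as $-\Delta u+V(x)u=du^q\ge 0$ with $V:=b+c\rho\phi_u\in L^\infty_{\textrm{loc}}$ nonnegative, and the Hopf--V\'azquez strong maximum principle then gives $u>0$ throughout $\R^N$.

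The step I expect to be most delicate is the borderline integrability encountered in dimension $N=6$ at the critical exponent $q=2^\ast-1$: the first pass of elliptic regularity places $\phi_u$ only in $L^{N/2}_{\textrm{loc}}=L^3_{\textrm{loc}}$ and simultaneously $|u|^{q-1}$ sits exactly in $L^{N/2}_{\textrm{loc}}$, so the iteration cannot proceed by merely raising Lebesgue exponents and one must invoke the truncation version of Brezis--Kato, exploiting absolute continuity of the $L^{N/2}$ norm on small balls to launch the bootstrap. A milder subtlety is the unique-continuation step in the proof of $\phi_u>0$, which I can safely invoke for the semilinear equation $-\Delta u+bu=d|u|^{q-1}u$ once $u$ has been upgraded to $C^{1,\alpha}_{\textrm{loc}}$ and $\rho\not\equiv 0$ is used to identify a nontrivial region on which $u$ would have to vanish.
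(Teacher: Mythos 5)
Your bootstrap is the same one the paper uses: read $-\Delta u = g(x,u)$ with $g$ dominated by $h(x)(1+|u|)$, $h=C(1+\rho\phi_u+|u|^{q-1})\in L^{N/2}_{\mathrm{loc}}$ (the dimensional restriction $N\le 6$ being what puts $\phi_u\in L^{2^\ast}\subseteq L^{N/2}_{\mathrm{loc}}$), run Brezis--Kato to get $u\in L^s_{\mathrm{loc}}$ for all finite $s$, feed back into the Poisson equation via Calder\'on--Zygmund to get $\phi_u\in W^{2,s}_{\mathrm{loc}}$, then close via Calder\'on--Zygmund and Morrey; the positivity comes from the strong maximum principle with $L^\infty_{\mathrm{loc}}$ coefficients. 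Your preliminary pass of regularity through the Poisson equation before invoking Brezis--Kato is redundant (the direct embedding $D^{1,2}\hookrightarrow L^{2^\ast}\subseteq L^{N/2}_{\mathrm{loc}}$ for $N\le 6$ already suffices, which is all the paper uses), and your assessment of the $N=6$, $q=2^\ast-1$ borderline is accurate.

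Where you depart from the paper is in the justification that $\phi_u\not\equiv 0$: the paper simply cites the strong maximum principle and leaves the non-degeneracy of $\rho u^2$ implicit, whereas you argue by contradiction via unique continuation. This is a fair point to make explicit, but as written your argument is not quite tight: when $\phi_u\equiv 0$ you only know $u=0$ a.e.\ on $\{\rho>0\}$, and since $\rho$ is merely a nonnegative $L^\infty_{\mathrm{loc}}$ function this set need not contain an open ball, so the ordinary unique continuation principle (vanishing on an open set) does not directly apply; you would have to invoke a unique-continuation-from-sets-of-positive-measure theorem for $-\Delta u + Vu=0$ with $V\in L^\infty_{\mathrm{loc}}$, or else observe that if $\{u\ne 0\}=\R^N$ then $\rho\equiv 0$ a.e.\ outright, and if $\{u\ne 0\}\subsetneq\R^N$ then $u$ vanishes on the closed set $\{u=0\}$ which contains $\{\rho>0\}$ up to a null set, a set of positive measure, and only then appeal to the positive-measure form of unique continuation. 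Flagging this dependence explicitly would make your addition watertight.
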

\begin{proof}
Under the hypotheses of the proposition, both $u$ and $\phi_u$ have weak second derivatives in $L^s_{\textrm{loc}
}(\R^N)$ for all $s<+\infty$. To show this, note that from the first equation in \eqref{poh system}, we have that $-\Delta u = g(x,u)$, where 
\begin{align*}
|g(x,u)|&=|(-b u -c  \rho (x) \phi_u u + d|u|^{q-1} u| \\
&\leq C(1+|\rho \phi_u |+|u|^{q-1})(1+|u|)\\
&=: h(x)(1+|u|).
\end{align*}
Using our assumptions on $\rho$, $\phi_u$, $u$, and that $q \leq \tstar -1$, we can show that $h\in L^{N/2}_{\textrm{loc}
}(\R^N)$, which implies that $u\in L^s_{\textrm{loc}
}(\R^N)$ for all $s<+\infty$ (see e.g. \cite[p.270]{Struwe Book}). Note that here the restriction on the dimension implies that $\phi_u\in L^{N/2}_{\textrm{loc}}(\R^N).$ Since $u^2\rho \in L^s_{\textrm{loc}
}(\R^N)$ for all $s<+\infty$, then by the second equation in \eqref{poh system} and the Calder\'{o}n-Zygmund estimates, we have that $\phi_u\in W^{2,s}_{\textrm{loc}
}(\R^N)$  (see e.g.\ \cite{Gilbarg and Trudinger}). This then enables us to show that $g\in L^s_{\textrm{loc}
}(\R^N)$ for all $s<+\infty$, which implies, by Calder\'{o}n-Zygmund estimates, that $u\in W^{2,s}_{\textrm{loc}
}(\R^N)$ (see e.g.\ \cite{Gilbarg and Trudinger}). The $C^{1,\alpha}_{\textrm{loc}}(\R^N)$ regularity of both $u,\phi_u$ is a consequence of Morrey's embedding theorem. Finally, the strict positivity
is a consequence of the strong maximum principle with $L^\infty_{\textrm{loc}}(\R^N)$ coefficients \cite{Montenegro}, and this concludes the proof.
\end{proof}

\subsection{Nonexistence}
The following lemma, proved in the Appendix, will be extensively used.
\begin{lemma}\label{pohozaevlemma}[{\bf Pohozaev-type condition}]
Assume $N\in[3,6],$ $q\in[1,\tstar -1]$, $\rho \in L^\infty_{\textrm{loc}
}(\R^N) \cap W^{1,1}_{\textrm{loc}
}(\R^N)$ is nonnegative, and $k\rho(x)\leq (x,\nabla \rho)$ for some $k\in\R$. Let $(u, \phi_u) \in E(\R^N) \times D^{1,2}(\R^N)$ be a weak solution to \eqref{poh system}. Then, it holds that 
\begin{equation}\label{pohozaev}
\begin{split}
\frac{N-2}{2}\int_{\R^N}&|\nabla u|^2 \dif x +\frac{Nb}{2}\int_{\R^N}u^2\dif x  \\
&+\frac{(N+2+2k)c}{4}\int_{\R^N}\rho\phi_u  u^2 \dif x-\frac{Nd}{q+1}\int_{\R^N}|u|^{q+1}\dif x \leq 0.
\end{split}
\end{equation}
In particular the above is an identity, provided $k\rho(x)= (x,\nabla \rho)$ (by Euler's theorem, this is the case if $\rho$ is homogeneous of order $k,$ see e.g. \cite[p. 296]{Gelfand and Shilov}). 
\end{lemma}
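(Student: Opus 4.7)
My plan is to derive the inequality as a standard Pohozaev-type identity obtained by testing the first equation of \eqref{poh system} against $(x\cdot\nabla u)\eta_R$, where $\eta_R(x)=\eta(x/R)$ is a smooth cutoff with $\eta\equiv 1$ on $B_1$ and $\operatorname{supp}\eta\subset B_2$, then combining with an auxiliary Pohozaev identity for $\phi_u$, and finally invoking the pointwise bound $k\rho\leq(x,\nabla\rho)$ to replace an equality by an inequality. By Proposition \ref{reg}, both $u$ and $\phi_u$ lie in $W^{2,s}_{\mathrm{loc}}(\R^N)\cap C^{1,\alpha}_{\mathrm{loc}}(\R^N)$ for every $s\geq 1$, so all local integrations by parts below are justified; the only care needed is in controlling the tails as $R\to\infty$, which I address at the end.

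For the standard terms, multiplying $-\Delta u+bu+c\rho\phi_u u=d|u|^{q-1}u$ by $(x\cdot\nabla u)\eta_R$ and integrating, the classical manipulations (writing $u(x\cdot\nabla u)=\tfrac{1}{2}(x\cdot\nabla u^2)$, $|u|^{q-1}u(x\cdot\nabla u)=\tfrac{1}{q+1}(x\cdot\nabla|u|^{q+1})$, and using $-\int\Delta u\,(x\cdot\nabla u)=-\tfrac{N-2}{2}\int|\nabla u|^2$ up to boundary terms) will produce
\begin{equation*}
\tfrac{N-2}{2}\int_{\R^N}|\nabla u|^2+\tfrac{Nb}{2}\int_{\R^N}u^2-\tfrac{Nd}{q+1}\int_{\R^N}|u|^{q+1}-c\int_{\R^N}\rho\phi_u u(x\cdot\nabla u)=0
\end{equation*}
in the limit $R\to\infty$ (along a suitable subsequence).

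The nonlocal term is the heart of the argument. Writing $u(x\cdot\nabla u)=\tfrac{1}{2}(x\cdot\nabla u^2)$ and integrating by parts on $\rho\phi_u$,
\begin{equation*}
\int_{\R^N}\rho\phi_u u(x\cdot\nabla u)=-\tfrac{N}{2}\int_{\R^N}\rho\phi_u u^2-\tfrac{1}{2}\int_{\R^N}\phi_u u^2(x,\nabla\rho)-\tfrac{1}{2}\int_{\R^N}\rho u^2(x,\nabla\phi_u).
\end{equation*}
To handle the last integral I plan to test the Poisson equation $-\Delta\phi_u=\rho u^2$ against $(x\cdot\nabla\phi_u)\eta_R$: the same Pohozaev computation applied to $\phi_u\in D^{1,2}(\R^N)$ yields $\int\rho u^2(x,\nabla\phi_u)=-\tfrac{N-2}{2}\int|\nabla\phi_u|^2=-\tfrac{N-2}{2}\int\rho\phi_u u^2$ by \eqref{double integral}. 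Substituting back gives
\begin{equation*}
\int_{\R^N}\rho\phi_u u(x\cdot\nabla u)=-\tfrac{N+2}{4}\int_{\R^N}\rho\phi_u u^2-\tfrac{1}{2}\int_{\R^N}\phi_u u^2(x,\nabla\rho).
\end{equation*}
Plugging this in and using the hypothesis $k\rho\leq(x,\nabla\rho)$ together with the nonnegativity of $c,\phi_u, u^2$ (so that $\tfrac{c}{2}\int\phi_u u^2(x,\nabla\rho)\geq\tfrac{ck}{2}\int\rho\phi_u u^2$) immediately gives \eqref{pohozaev}. When the hypothesis is an equality, no inequality is introduced at this step, and one obtains the identity claimed.

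The main technical obstacle, as usual with Pohozaev identities on $\R^N$, is the control of the tail contributions from the cutoff $\eta_R$. The candidate boundary/tail terms involve quantities such as $\int|\nabla u|^2|x\cdot\nabla\eta_R|$, $\int u^2|x\cdot\nabla\eta_R|$, $\int|u|^{q+1}|x\cdot\nabla\eta_R|$, and the analogous ones arising from the Pohozaev identity for $\phi_u$ together with $\int\rho\phi_u u^2|x\cdot\nabla\eta_R|$. Since each of the densities $|\nabla u|^2$, $u^2$, $|u|^{q+1}$, $|\nabla\phi_u|^2$, and $\rho\phi_u u^2$ belongs to $L^1(\R^N)$ (the last two by \eqref{double integral}), and $|x\cdot\nabla\eta_R|$ is uniformly bounded in $R$ and supported on $R\leq|x|\leq 2R$, a standard radial-average argument (Fatou/Fubini on $f(R)=\int_{|x|=R}(\cdots)\,dS$) gives a sequence $R_n\to\infty$ along which all such tail integrals vanish, allowing me to pass to the limit and complete the proof.
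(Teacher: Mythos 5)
Your proposal is correct and follows essentially the same route as the paper's Appendix A proof: test the first equation against $(x\cdot\nabla u)$, handle the nonlocal term by integrating by parts on $\rho\phi_u$ together with the Pohozaev identity for the Poisson equation (turning $\int\rho u^2(x\cdot\nabla\phi_u)$ into $-\tfrac{N-2}{2}\int\rho\phi_u u^2$), then apply $k\rho\leq(x,\nabla\rho)$ with $c\phi_u u^2\geq 0$, and kill the remainder using the $L^1(\R^N)$ integrability of $|\nabla u|^2,u^2,|u|^{q+1},|\nabla\phi_u|^2,\rho\phi_u u^2$ along a suitable sequence $R_n\to\infty$. The only difference is cosmetic: you localize with a smooth cutoff $\eta_R$, whereas the paper integrates over $B_R$ and carries the $\partial B_R$ boundary terms explicitly.
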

Although we will use the above necessary condition mainly for existence purposes,  this also allows us to find a family of nonexistence results in a certain range of the parameters $N,q,\lambda,k$.
\begin{proposition}[{\bf Nonexistence: the critical case $q=2^*-1$}]\label{nonexistence for critical q}
Assume $N\in[3,6],$ $q=\tstar -1$, $\rho \in L^\infty_{\textrm{loc}
}(\R^N) \cap W^{1,1}_{\textrm{loc}
}(\R^N)$ nonnegative, $k\rho(x)\leq (x,\nabla \rho)$ for some $k\geq\frac{N-6}{2},$ and $\lambda> 0$. Let $(u, \phi_u) \in E(\R^N)\times D^{1,2}(\R^N)$ be a weak solution to \eqref{main SP system multiplicity}. Then, $(u, \phi_u)=(0,0).$
\end{proposition}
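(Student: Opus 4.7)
The plan is to combine the Pohozaev-type inequality of Lemma \ref{pohozaevlemma} with the standard Nehari-type identity obtained by testing the equation with $u$ itself. The critical exponent $q = 2^\ast - 1$ is exactly the value at which the $|\nabla u|^2$ and the $|u|^{q+1}$ terms can be eliminated simultaneously, since $\frac{N}{q+1} = \frac{N}{2^\ast} = \frac{N-2}{2}$.

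First, I would note that Proposition \ref{reg} applies, so $u \in C^{1,\alpha}_{\mathrm{loc}}(\R^N)$ and $u \in E(\R^N)$ is an admissible test function, yielding
\begin{equation*}
\int_{\R^N}|\nabla u|^2 + \int_{\R^N} u^2 + \lambda^2 \int_{\R^N}\rho \phi_u u^2 \;=\; \int_{\R^N}|u|^{2^\ast}.
\end{equation*}
Next, applying Lemma \ref{pohozaevlemma} with $b=d=1$ and $c=\lambda^2$ and using $\frac{N}{2^\ast} = \frac{N-2}{2}$ gives
\begin{equation*}
\frac{N-2}{2}\int_{\R^N}|\nabla u|^2 + \frac{N}{2}\int_{\R^N}u^2 + \frac{(N+2+2k)\lambda^2}{4}\int_{\R^N}\rho \phi_u u^2 - \frac{N-2}{2}\int_{\R^N}|u|^{2^\ast} \;\leq\; 0.
\end{equation*}

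I would then subtract $\frac{N-2}{2}$ times the Nehari identity from the Pohozaev inequality. The terms $\frac{N-2}{2}\int|\nabla u|^2$ and $\frac{N-2}{2}\int|u|^{2^\ast}$ cancel, leaving
\begin{equation*}
\int_{\R^N} u^2 \;+\; \frac{2k + 6 - N}{4}\,\lambda^2 \int_{\R^N}\rho \phi_u u^2 \;\leq\; 0.
\end{equation*}
The hypothesis $k \geq \frac{N-6}{2}$ is equivalent to $2k + 6 - N \geq 0$, so both terms on the left-hand side are nonnegative. Since $\lambda > 0$ and $\rho \phi_u u^2 \geq 0$ pointwise, each term must vanish; in particular $\int_{\R^N} u^2 = 0$, forcing $u \equiv 0$, and then $\phi_u \equiv 0$ by the uniqueness of the $D^{1,2}(\R^N)$ solution to the Poisson equation with zero right-hand side.

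The only delicate point is really a bookkeeping one: one must check that the two coefficients $\frac{N-2}{2}$ coming from the kinetic term and from $\frac{N}{q+1}$ at $q=2^\ast - 1$ match exactly, which is the classical Pohozaev obstruction mechanism. The remaining hypothesis $k \geq (N-6)/2$ is precisely what is needed to make the Coulomb contribution cooperate (or at worst be neutral) with the $L^2$ term, rather than compete with it. No further compactness or regularity arguments are required beyond those already collected in Proposition \ref{reg} and Lemma \ref{pohozaevlemma}.
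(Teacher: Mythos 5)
Your proof is correct and follows the same approach as the paper: combine the Nehari identity with the Pohozaev-type inequality of Lemma \ref{pohozaevlemma}, observe that at $q=2^*-1$ the coefficients $\frac{N-2}{2}$ and $\frac{N}{q+1}$ coincide so the gradient and nonlinear terms cancel, and conclude $\int u^2\leq 0$ using $k\geq\frac{N-6}{2}$. The bookkeeping matches the paper's displayed computation exactly.
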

\begin{proof}
Combining the Nehari identity $I_\lambda'(u)(u)=0$ with Lemma \ref{pohozaevlemma} yields
\begin{equation*}
\Big(\frac{N-2}{2}-\frac{N}{q+1}\Big)\int_{\R^N}|\nabla u|^2 \dif x +\Big(\frac{N}{2}-\frac{N-2}{2}\Big)\int_{\R^N}u^2\dif x +\Big(\frac{2k+6-N}{4}\Big)\lambda^2\int_{\R^N}\rho\phi_u  u^2 \dif x \leq 0.
\end{equation*}
Hence, $$\int_{\R^N}u^2\dif x \leq 0,$$ and this concludes the proof.
\end{proof}

\begin{remark}
Similar nonexistence results have been obtained in the case of constant potentials and for $N=3$, in \cite{D'Aprile and Mugnai}. We point out that the in the above proposition $\lambda>0$ is arbitrary and the condition on $\rho$ is compatible with \ref{vanishing_rho}, as well as with \ref{coercive_rho}. It is interesting to note that for $N=6$ we have  $q=2^*-1=2,$ namely nonexistence occurs in a `low-$q$' regime, under both conditions \ref{vanishing_rho} and \ref{coercive_rho}. The proof shows also that for supercritical exponents $q+1>2^*$ and higher dimensions, under further regularity assumptions required for Lemma \ref{pohozaevlemma} to hold, nonexistence also occurs. 
\end{remark}

\begin{proposition}[{\bf Nonexistence: the case $q\in(1,2]$}]\label{nonexistence for lowest q}
Assume $N\geq3,$ $q\in(1,2]$, $\rho \in L^\infty_{\textrm{loc}
}(\R^N)$ and $\rho(x)\geq 1$ almost everywhere and $\lambda\geq\frac{1}{2}$. Let $u\in E(\R^N) \cap L^{q+1}(\R^N)$ satisfy  
\begin{equation}\label{nonex-eq}
- \Delta u+ u + \lambda^2 \left(\frac{1}{\omega|x|^{N-2}}\star \rho u^2\right) \rho(x) u  = |u|^{q-1} u, \qquad \textrm{in}\,\,\mathcal D'( \mathbb R^N). 
\end{equation}
Then, $u\equiv 0.$
\end{proposition}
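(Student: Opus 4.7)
The plan is to test the distributional equation \eqref{nonex-eq} against $u$ itself to obtain the Nehari-type identity
$$
\int_{\R^N} |\nabla u|^2 + \int_{\R^N} u^2 + \lambda^2 \int_{\R^N} \rho\,\phi_u\,u^2 = \int_{\R^N} |u|^{q+1},
$$
and then, after interpolating the right-hand side between $L^2$ and $L^3$, to let the Coulomb--Sobolev inequality of Lemma \ref{weightedL3boundlemma} absorb the remaining $L^3$ contribution into the left-hand side. All four integrals above are finite because $u\in E(\R^N)\cap L^{q+1}(\R^N)$; since each term of the equation is a continuous functional of a test function in the $E\cap L^{q+1}$ topology, and $C^\infty_c(\R^N)$ is dense in $E(\R^N)$ (and in $L^{q+1}(\R^N)$), a standard approximation justifies choosing $u$ itself as a test function.

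Since $q+1\in(2,3]$ when $q\in(1,2]$, H\"older's inequality with conjugate exponents $1/(2-q)$ and $1/(q-1)$ gives
$$
\int_{\R^N} |u|^{q+1} \leq \left(\int_{\R^N} u^2\right)^{2-q}\left(\int_{\R^N} |u|^3\right)^{q-1},
$$
and a further Young's inequality with the same conjugate pair linearises this as
$$
\int_{\R^N} |u|^{q+1} \leq (2-q)\int_{\R^N} u^2 + (q-1)\int_{\R^N}|u|^3.
$$
Using the assumption $\rho\geq 1$, Lemma \ref{weightedL3boundlemma}, and the identity $\int|\nabla\phi_u|^2=\int\rho\phi_u u^2$, I would then estimate
$$
\int_{\R^N}|u|^3 \leq \int_{\R^N}\rho|u|^3 \leq \left(\int_{\R^N}|\nabla u|^2\right)^{1/2}\left(\int_{\R^N}\rho\phi_u u^2\right)^{1/2},
$$
and split this product via Young's inequality with a free parameter $\varepsilon>0$ into a gradient contribution plus a Coulomb contribution.

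Substituting everything back into the Nehari identity and regrouping yields
$$
(q-1)\int_{\R^N} u^2 + \left(1-\frac{(q-1)\varepsilon}{2}\right)\int_{\R^N}|\nabla u|^2 + \left(\lambda^2-\frac{q-1}{2\varepsilon}\right)\int_{\R^N}\rho\phi_u u^2 \leq 0.
$$
The optimal choice $\varepsilon=2/(q-1)$ kills the gradient coefficient and leaves the Coulomb coefficient equal to $\lambda^2-(q-1)^2/4$; this last quantity is nonnegative precisely when $\lambda\geq(q-1)/2$, and since $q\leq 2$ implies $(q-1)/2\leq 1/2$, the hypothesis $\lambda\geq 1/2$ is sufficient. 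All three coefficients are then $\geq 0$, with $q-1>0$ strictly, forcing $\int_{\R^N} u^2=0$, i.e.\ $u\equiv 0$. The only delicate step I anticipate is the rigorous extension of the test-function identity to $\varphi=u$; once this density argument is in place, the rest is a clean balancing of Young exponents, with $\lambda=1/2$ arising as the exact saturating value at $q=2$.
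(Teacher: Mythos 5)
Your proof is correct. It shares with the paper the two essential ingredients — the Nehari identity obtained by testing with $u$, and the Coulomb--Sobolev inequality of Lemma~\ref{weightedL3boundlemma} — but differs in how it exploits $q+1\in(2,3]$. The paper works pointwise: a \emph{fixed} Young split $ab\le a^2+\tfrac{1}{4}b^2$ applied to Lemma~\ref{weightedL3boundlemma} (the factor $1/4$ being exactly why $\lambda\ge\tfrac12$ suffices), combined with the Nehari identity and $\rho\ge1$, reduces matters to $0\ge\int f(u)$ with $f(s)=s^2+|s|^3-|s|^{q+1}$, which is strictly positive off the origin for all $q\in(1,2]$. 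You instead linearise $|u|^{q+1}$ at the integral level by interpolating $L^{q+1}$ between $L^2$ and $L^3$ and applying Young twice, keeping a free parameter $\varepsilon$; choosing $\varepsilon=2/(q-1)$ annihilates the gradient coefficient and leaves the Coulomb coefficient $\lambda^2-(q-1)^2/4$, nonnegative once $\lambda\ge(q-1)/2$, which $\lambda\ge\tfrac12$ guarantees for $q\le 2$. Your route is slightly more computational but exposes the $q$-dependent threshold $(q-1)/2$, strictly sharper than $1/2$ when $q<2$, whereas the paper's pointwise $f$ is cleaner and bypasses the H\"older interpolation entirely. One point you should state explicitly rather than leave implicit: the interpolation step requires $u\in L^3(\R^N)$, which does \emph{not} follow from $u\in H^1(\R^N)$ alone once $N\ge7$; it does follow from $\rho\ge1$ together with $u\in E(\R^N)$ and Lemma~\ref{weightedL3boundlemma}, i.e.\ from the very estimate you invoke in the next line — so there is no gap, but the finiteness of $\int|u|^3$ belongs \emph{before} the H\"older step in a rigorous writeup.
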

 We note that this proposition is stated to cover also the dimensions $N> 2\left(\frac{q+1}{q-1}\right),$ namely the supercritical cases $3\geq q+1>2^*$ where $E(\R^N)$ does not embed in $L^{q+1}(\R^N).$ 
\begin{proof}
 By density we can test (\ref{nonex-eq}) by $u$ and so we obtain
\begin{equation}\label{nonexistenceforlowpequation}
    \intrn \abs{\nabla u}^2 + u^2 + \lambda^2 \rho (x) \phi_u u^2 - \abs{u}^{q+1} = 0.
\end{equation}
Following \cite[Theorem 4.1]{RuizJFA}, 
by Lemma \ref{weightedL3boundlemma} and Young's inequality we have
\begin{equation}\label{poissonon}
    \intrn \rho(x) \abs{u}^3 \leq  \intrn \abs{\nabla u}^2 + \frac{1}{4} \int_{\R^N}\rho (x) \phi_u u^2.
\end{equation}
Combining \eqref{nonexistenceforlowpequation} and \eqref{poissonon}, we have for all $\lambda \geq \half$
\begin{equation*}
  0\geq \intrn u^2 + \rho(x) \abs{u}^3 - \abs{u}^{q+1}\geq\int_{\R^N}f(u),
\end{equation*}
where $f(u) = u^2 +  \abs{u}^3 - \abs{u}^{q+1}$ is positive except at zero. Hence $u \equiv 0,$ and this concludes the proof.
\end{proof}

\subsection{Min-max setting}
The present section is devoted to the min-max properties of $I_{\lambda},$ which will be used in our existence results.

\begin{lemma}[\bf{Mountain-Pass Geometry for $I_{\lambda}$}]\label{mpgfori}
	Assume $N = 3,4,5$, $\rho(x) \in L^\infty_{\textrm{loc}}(\R^N)$ is nonnegative and $q \in (2, 2^* -1]$. Then, it holds that
	\begin{enumerate}[label=(\roman*)]
		\item $I_{\lambda} (0) = 0$ and there exist constants $r,a > 0$ such that $I_{\lambda}(u) \geq a$ if $\norme{u}{\R^N} = r;$
		\item there exist $v \in E(\R^N)$ with $\norme{v}{\R^N} > r$ such that $I_{\lambda}(v) \leq 0$.
	\end{enumerate}	
\end{lemma}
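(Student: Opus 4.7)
The plan is to treat (i) and (ii) separately. For (i), the identity $I_\lambda(0)=0$ is immediate. For the sphere estimate, I will parametrise $a:=\|u\|_{H^1(\R^N)}^2\in[0,r^2]$; the constraint $\|u\|_{E(\R^N)}^2=a+\lambda\|\nabla\phi_u\|_{L^2(\R^N)}=r^2$ determines $\|\nabla\phi_u\|_{L^2(\R^N)}=(r^2-a)/\lambda$, so discarding the nonnegative Coulomb contribution where needed and using the Sobolev embedding $H^1(\R^N)\hookrightarrow L^{q+1}(\R^N)$ (valid since $q+1\le 2^*$, with constant $S>0$) gives
\[
I_\lambda(u)\;\ge\;F(a)\;:=\;\frac{a}{2}+\frac{(r^2-a)^2}{4}-\frac{S}{q+1}\,a^{(q+1)/2}.
\]
Since $F(0)=r^4/4$, the claim reduces to $F(a)\ge F(0)$ for $a\in[0,r^2]$. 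A direct computation gives $F(a)-F(0)=\tfrac{a(1-r^2)}{2}+\tfrac{a^2}{4}-\tfrac{S}{q+1}a^{(q+1)/2}$; combining with $a^{(q+1)/2}=a\cdot a^{(q-1)/2}\le a\,r^{q-1}$ (valid on $[0,r^2]$ for $r<1$ as $q>1$) yields
\[
F(a)-F(0)\;\ge\;a\!\left[\frac{1-r^2}{2}-\frac{S\,r^{q-1}}{q+1}\right]+\frac{a^2}{4},
\]
which is nonnegative as soon as $r$ is small enough for the bracket to be positive. This proves (i) with constant $r^4/4$.

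For (ii), I will fix any $u_0\in C^\infty_c(\R^N)\setminus\{0\}$ with $\operatorname{supp} u_0\subset B_R(0)$ and test the functional on the concentration scaling $u_\sigma(x):=\sigma^\alpha u_0(\sigma x)$ for a parameter $\alpha>0$ to be selected. A change of variables gives
\[
\|\nabla u_\sigma\|_{L^2}^2=\sigma^{2\alpha+2-N}\|\nabla u_0\|_{L^2}^2,\qquad \|u_\sigma\|_{L^2}^2=\sigma^{2\alpha-N}\|u_0\|_{L^2}^2,\qquad \|u_\sigma\|_{L^{q+1}}^{q+1}=\sigma^{(q+1)\alpha-N}\|u_0\|_{L^{q+1}}^{q+1},
\]
and the Coulomb term is controlled by $\sigma^{4\alpha-N-2}M_R^2K_0$, where $M_R:=\|\rho\|_{L^\infty(B_R)}$ is finite by local boundedness of $\rho$ and $K_0:=\iint u_0^2(x)u_0^2(y)/(\omega|x-y|^{N-2})\,\dif x\,\dif y<+\infty$ by Hardy--Littlewood--Sobolev. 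I will then pick $\alpha$ so that the exponent $(q+1)\alpha-N$ of the (negative) nonlinear term strictly dominates every positive exponent as $\sigma\to\infty$. The binding inequalities are $\alpha>2/(q-1)$ (against $\sigma^{2\alpha+2-N}$) and $(q-3)\alpha>-2$ (against $\sigma^{4\alpha-N-2}$): the latter is automatic when $q\ge 3$, while for $q\in(2,3)$ it reads $\alpha<2/(3-q)$, opening the nonempty window $\alpha\in(2/(q-1),\,2/(3-q))$ precisely because $q>2$. Choosing $\alpha$ additionally larger than $(N-2)/2$ (which is compatible with the above interval for $N\in\{3,4,5\}$ and $q\in(2,2^*-1]$) ensures $\|u_\sigma\|_E\to\infty$. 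With such $\alpha$, $I_\lambda(u_\sigma)\to-\infty$ and $\|u_\sigma\|_E\to\infty$ as $\sigma\to\infty$, so for $\sigma$ sufficiently large $v:=u_\sigma$ satisfies the required conditions.

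The delicate point, and the main obstacle, is the range $q\in(2,3]$. Under pure dilation $v=tu_0$ one has $I_\lambda(tu_0)\sim t^2\mathcal A+t^4\mathcal B-t^{q+1}\mathcal D$; for $q+1\le 4$ the (positive) Coulomb term dominates and the path cannot escape below zero. The concentration scaling decouples the two exponents: localising $u_\sigma$ on ever smaller balls lets the Coulomb growth in $\sigma$ stay at $4\alpha-N-2$ rather than $4\alpha-N$, thanks to local boundedness of $\rho$, and this is exactly what opens the window $(q-3)\alpha>-2$ when $q>2$. This explains why the assumption $q>2$ is essentially sharp for the geometry condition (ii).
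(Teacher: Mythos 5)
Your argument for (ii) is exactly the paper's: the authors take the concentration scaling $v_t(x)=t^2u(tx)$ with $u$ supported in the unit ball, use the local boundedness of $\rho$ on $B_1$ to control the Coulomb kernel uniformly in $t\ge 1$, and observe that the exponent $2q+2-N$ of the negative term beats $6-N$ precisely when $q>2$; your window $\alpha\in\bigl(2/(q-1),\,2/(3-q)\bigr)$ is the general form of this, and $\alpha=2$ is the interior point they implicitly pick. For (i) the paper appeals to the Young-inequality computation in Lemma \ref{nonzeroweaklimitlemma}, whereas you parametrise $a=\|u\|_{H^1}^2$ directly on the sphere and verify $F(a)\ge F(0)$; this is a mild rephrasing rather than a different route (note only that, from the definition of $\|\cdot\|_{E(\R^N)}$ and \eqref{double integral}, the correct relation is $\lambda\omega^{1/2}\|\nabla\phi_u\|_{L^2}=r^2-a$ and the Coulomb contribution to $F$ carries a factor $1/(4\omega)$ rather than $1/4$ — this changes nothing in the argument). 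Your closing remark, that concentration shifts the Coulomb exponent from $4\alpha-N$ to $4\alpha-N-2$ and that this is what opens the $q\in(2,3]$ range inaccessible to pure dilation, is a correct and useful explanation of the mechanism behind the paper's choice of path.
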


\begin{proof}
Statement $(i)$ follows reasoning as in Lemma \ref{nonzeroweaklimitlemma}. To show $(ii)$, pick $u\in C^{1}(\R^N)$, supported in the unit ball, $B_1$. Setting $v_t(x) \coloneqq t^2u(tx)$ we find that 
\begin{equation}\label{rescaledI}
\begin{split}
I_{\lambda} (v_t) =\frac{t^{6-N}}{2} \int_{\R^N} &|\nabla u|^2 + \frac{t^{4-N}}{2} \int_{\R^N} u^2 + \frac{ t^{6-N}}{4} \lambda^2\int_{\R^N}\int_{\R^N} \frac{ u^2(y)\rho(\frac{y}{t})u^2(x)\rho(\frac{x}{t})}{\omega |x-y|^{N-2}}\dif y\,\dif x \\
&- \frac{ t^{(2q+2-N)}}{q+1} \int_{\R^N}|u|^{q+1}.
\end{split}
\end{equation}
Since for every $t\geq 1$ and for almost every $x\in B_1$ we have $\rho(x/t)\leq ||\rho||_{L^{\infty}(B_1)},$ the fact that  $2q+2>6$ in \eqref{rescaledI} yields $I_{\lambda} (v_{t}) \to -\infty$ as $t\to +\infty,$ and this is enough to conclude the proof.
\end{proof}

To prove our results for $q<3$, we will need to work with a perturbed functional, $I_{\mu,\lambda}:E(\R^N)\to\R^N$, defined by
\begin{equation}\label{definition I mu lambda}
I_{\mu,\lambda}(u) = \frac{1}{2}\int_{\R^N}(|\nabla u|^2 + u^2)+\frac{\lambda^2}{4}\int_{\R^N}  \rho\phi_u u^2 -\frac{\mu}{q+1}\int_{\R^N}|u|^{q+1},\quad \mu\in\left[\frac{1}{2}, 1\right].
\end{equation}
As in Lemma \ref{mpgfori}, $I_{\mu,\lambda}$ has the mountain-pass geometry in $E(\R^N)$ for all $\mu\in\left[\frac{1}{2},1\right]$. This, as well as the monotonicity of $I_{\mu,\lambda}$ with respect to $\mu$, imply that we can define
the min-max level associated with $I_{\mu,\lambda}$ as
\begin{equation}\label{mp level low q}
c_{\mu,\lambda}= \inf_{\gamma\in\Gamma_{\lambda}}\max_{t\in[0,1]} I_{\mu,\lambda} (\gamma(t)),\quad \mu\in\left[\frac{1}{2},1\right]
\end{equation}
where 
\begin{equation}\label{gamma lambda}
{\Gamma}_{\lambda} = \{ \gamma \in C([0,1], E(\R^N)) : \gamma (0) =0, \, I_{\frac{1}{2},\lambda}(\gamma(1))< 0\}.
\end{equation}
Since the mapping $[1/2,1] \ni \mu \mapsto c_{\mu,\lambda}$ is non-increasing and left-continuous in $\mu$ (see  \cite[Lemma $2.2$]{Ambrosetti and Ruiz}) and the non-perturbed functional $I_{\lambda}$ has the mountain-pass geometry by Lemma \ref{mpgfori}, we are now in position to define the min-max level associated with $I_{\lambda}$ for all $q\in(2,\tstar-1)$.
\begin{definition}[\bf{Definition of mountain-pass level for $I_{\lambda}$}]
We set
\begin{equation}\label{mountainpasslevelforI}
{c}_{\lambda}= \left\{
\begin{array}{lll}
  c_{1,\lambda},\  &q\in(2,3),  \\
   \inf\limits_{\gamma \in \bar{\Gamma}_{\lambda}} \max\limits_{t \in [0,1]} \ I_{\lambda}(\gamma (t)),\ &  q\in[3,\tstar-1),
\end{array}
\right.
\end{equation}
where $c_{1,\lambda}$ is given by \eqref{mp level low q} and $\bar{\Gamma}_{\lambda}$ is the family of paths defined as
\begin{equation}\label{bar gamma lambda}
	\bar{\Gamma}_{\lambda} = \left\{ \gamma \in C([0,1]; E(\R^N)) : \gamma(0) = 0, I_{\lambda}(\gamma(1)) < 0 \right\}.
\end{equation}
\end{definition}
\color{black}

The remainder of this subsection is devoted to further characterisations of the min-max level $c_{\lambda}$ for $q\leq3$. We first require the following technical lemma.

\begin{lemma}\label{derivatives of f}
Suppose $N\geq 3$, $q>2$ and $\nu>\max\left\{\frac{N}{2},\frac{2}{q-1}\right\}$. Let $\bar{k}\in\left(\frac{\nu(3-q)-2}{2},\frac{4\nu-N-2}{2} \right)$. Define $f:\R^+_0\to \R$ as
\[f(t)=a{t^{2\nu+2-N}}+bt^{2\nu-N}+c t^{4\nu-N-2-2\bar{k}}-d t^{\nu(q+1)-N}, \quad t\geq0,\]
where $a, b, c, d\in\R$ are such that $a, b, d>0$, $c\geq0$. Then, $f$ has a unique critical point corresponding to its maximum.
\end{lemma}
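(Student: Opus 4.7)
The plan is to analyse $f'(t)$ by factoring out the highest-order term and reducing the problem to a strictly monotone auxiliary function. First I would catalogue the four exponents appearing in $f$,
\[
\alpha_1 := 2\nu+2-N, \quad \alpha_2 := 2\nu-N, \quad \alpha_3 := 4\nu-N-2-2\bar{k}, \quad \alpha_4 := \nu(q+1)-N,
\]
and verify they are all strictly positive under the standing hypotheses: $\alpha_1,\alpha_2>0$ follow from $\nu>N/2$, the inequality $\alpha_3>0$ follows from the upper bound $\bar{k}<\tfrac{4\nu-N-2}{2}$, and $\alpha_4>0$ follows from $\nu>N/2$ together with $q>2$. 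In particular $f(0)=0$ and $f$ is initially increasing from zero.

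The crucial observation is then that $\alpha_4$ strictly dominates each of $\alpha_1,\alpha_2,\alpha_3$. Indeed, $\alpha_4-\alpha_1 = \nu(q-1)-2>0$ by the hypothesis $\nu>\tfrac{2}{q-1}$; trivially $\alpha_4-\alpha_2>\alpha_4-\alpha_1>0$; and $\alpha_4-\alpha_3 = \nu(q-3)+2+2\bar{k}>0$ is precisely equivalent to the lower bound $\bar{k}>\tfrac{\nu(3-q)-2}{2}$. Confirming this last equivalence is the main (though elementary) bookkeeping point of the proof, and it is where the peculiar form of the lower bound on $\bar{k}$ in the statement is genuinely used.

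With $\alpha_4$ strictly largest, I would then factor
\[
f'(t) = d\, t^{\alpha_4-1}\, G(t), \qquad G(t) := \tfrac{a\alpha_1}{d}\, t^{\alpha_1-\alpha_4} + \tfrac{b\alpha_2}{d}\, t^{\alpha_2-\alpha_4} + \tfrac{c\alpha_3}{d}\, t^{\alpha_3-\alpha_4} - \alpha_4.
\]
Each of the three power terms in $G$ carries a strictly negative exponent $\alpha_i-\alpha_4<0$ together with a nonnegative coefficient (strictly positive for $i=1,2$ since $a,b>0$), so $G$ is strictly decreasing on $(0,\infty)$, with $G(t)\to +\infty$ as $t\to 0^+$ and $G(t)\to -\alpha_4<0$ as $t\to\infty$. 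By the intermediate value theorem $G$ admits a unique zero $t_0\in(0,\infty)$, and since $d\, t^{\alpha_4-1}>0$ for all $t>0$, the sign of $f'$ coincides with that of $G$: $f'>0$ on $(0,t_0)$ and $f'<0$ on $(t_0,\infty)$. Hence $t_0$ is the unique critical point of $f$ in $(0,\infty)$, and combined with $f(0)=0$ it is the location of the global maximum of $f$, as claimed.
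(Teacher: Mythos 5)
Your proof is correct, and the exponent bookkeeping (all four exponents positive, $\alpha_4 = \nu(q+1)-N$ strictly largest, with $\alpha_4-\alpha_3>0$ being exactly the lower bound on $\bar k$) is exactly what the lemma's hypotheses are engineered to produce. The route you take, however, is genuinely different from the paper's. You differentiate, factor out the dominant-power term $d\,t^{\alpha_4-1}$, and observe that the quotient $G(t)$ is a strictly decreasing function (nonnegative coefficients, all negative exponents, with at least two strictly positive coefficients coming from $a,b>0$), running from $+\infty$ to $-\alpha_4<0$, so $f'$ changes sign exactly once. The paper instead normalises $d=1$ (harmless, since scaling $f$ does not move critical points) and performs the change of variable $s=t^{\alpha_4}$, writing $f$ as $\sum_i a_i s^{p_i/\alpha_4}-s$ with each $p_i/\alpha_4\in(0,1)$; this is a strictly concave function of $s$ which, together with $f\to-\infty$, immediately gives a unique critical point, and since $t\mapsto t^{\alpha_4}$ is an increasing bijection of $(0,\infty)$ this transfers back to $t$. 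Both arguments are short and correct; your factoring argument is perhaps a touch more elementary, while the paper's concavity-via-substitution is slicker and packages the sign-change analysis into a single invocation of strict concavity. One small point worth making explicit in your write-up: you should remark (as you implicitly do through $f(0)=0$ and the initial increase) that $t=0$ is a boundary point of $\R^+_0$, so the unique interior critical point $t_0$ is indeed the global maximiser.
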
 
\begin{remark}\label{k bound}
We point out that our range of parameters ensures that $f(t)\to -\infty$ as $t\to +\infty$ and it holds that $$\left(\frac{\nu(3-q)-2}{2},\frac{4\nu-N-2}{2} \right)\bigcap \left(\frac{(\nu+1)(3-q)-2}{2},\frac{4(\nu+1)-N-2}{2} \right)\neq \emptyset.$$ In Theorem \ref{least energy pure homog rho} and Theorem \ref{partial multiplicity result low q}, we use Lemma \ref{derivatives of f}, assuming
$$ 
\bar{k}> \max\left\{\frac{N}{4},\frac{1}{q-1}\right\}(3-q)-1
$$
for $\bar{k}$ to belong to one of these intervals.
\end{remark}
\begin{proof}[Proof of Lemma \ref{derivatives of f}]
Note that by our assumptions, we can write
$$f(t)=\sum_{i=1}^k a_it^{p_i}-t^p,$$
where $a_i\geq0$, $0\leq p_i<p$ and both $a_i$, $p_i\neq0$ for some $i$. Setting $s=t^p$, we find
$$f(s)=\sum_{i=1}^k a_is^{\frac{p_i}{p}}-s.$$
It follows that $f(s)$ is strictly concave and has a unique critical point, which is a maximum. Since our assumptions ensure that $f(t)\to -\infty$ as $t\to +\infty,$ we can conclude.
\end{proof}

\noindent To state our next result, for any $\nu\in\R$, we set
\begin{equation}\label{definition of M lambda}
\bar{\mathcal{M}}_{\lambda,\nu}= \left\{u\in E(\R^N)\setminus \{0\}:J_{\lambda,\nu}(u)=0\right\},
\end{equation}
where $J_{\lambda,\nu}:E(\R^N)\to\R^N$ is defined as
\begin{equation}\label{definition of J lambda}
    \begin{split}
      J_{\lambda,\nu}(u)=\frac{2\nu+2-N}{2}\int_{\R^N}&|\nabla u|^2+\frac{2\nu-N}{2}\int_{\R^N} u^2\\
      &+\frac{4\nu-N-2-2\bar{k}}{4}\cdot \lambda^2\int_{\R^N}\rho\phi_u u^2- \frac{\nu(q+1)-N}{q+1}\int_{\R^N}| u|^{q+1}. 
    \end{split}
\end{equation}
Notice that, if $\rho$ is homogeneous of order $\bar{k},$ $J_{\lambda,\nu}(u)$ is the derivative of the polynomial $f(t)=I_{\lambda}(t^{\nu}u(t\cdot))$ at $t=1$.
\begin{proposition}[\bf{Mountain-pass characterisation of groundstates}]\label{variational characterisation MP level low q}
Let $N=3,4,5$, $q \in (2, 3]$ if $N=3$ and $q\in(2,\tstar-1)$ if $N=4,5$. Suppose $\rho \in L^\infty_{\textrm{loc}}(\R^N)\cap W^{1,1}_{loc}(\R^N)$ is nonnegative and is homogeneous of degree $\bar{k}$, namely $ \rho(tx)=t^{\bar{k}}\rho(x)$ for all $t>0$, for some 
\[\bar{k}>\max\left\{\frac{N}{4},\frac{1}{q-1}\right\}(3-q)-1.\]
Then, there exists $\nu>\max\{\frac{N}{2},\frac{2}{q-1}\}$ such that $${c}_{\lambda}=\inf_{u\in \bar{\mathcal{M}}_{\lambda,\nu}}I_{\lambda}(u)=\inf_{u\in E(\R^N)\setminus\{0\}}\max_{t\geq0} I_{\lambda}(t^{\nu} u(t\cdot)),$$
where ${c}_{\lambda}$ and $\bar{\mathcal{M}}_{\lambda,\nu}$ are defined in \eqref{mountainpasslevelforI} and \eqref{definition of M lambda}, respectively.
\end{proposition}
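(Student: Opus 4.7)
The plan is to first pick $\nu$ so that the fibre map $t\mapsto I_\lambda(t^{\nu}u(t\cdot))$ falls within the framework of Lemma \ref{derivatives of f}, then to identify its unique-maximum value with the infimum of $I_\lambda$ over $\bar{\mathcal{M}}_{\lambda,\nu}$, and finally to match this common value with $c_\lambda$. By Remark \ref{k bound}, the hypothesis on $\bar{k}$ produces $\nu>\max\{N/2,2/(q-1)\}$ with $\bar{k}\in\left(\frac{\nu(3-q)-2}{2},\frac{4\nu-N-2}{2}\right)$; I fix such a $\nu$. A change of variable in each term of $I_\lambda$, using $\rho(x/t)=t^{-\bar{k}}\rho(x)$, yields
\[I_\lambda(t^{\nu}u(t\cdot))=\tfrac{t^{2\nu+2-N}}{2}\int_{\R^N}|\nabla u|^2+\tfrac{t^{2\nu-N}}{2}\int_{\R^N}u^2+\tfrac{\lambda^2 t^{4\nu-N-2-2\bar{k}}}{4}\int_{\R^N}\rho\phi_u u^2-\tfrac{t^{\nu(q+1)-N}}{q+1}\int_{\R^N}|u|^{q+1},\]
which is exactly a polynomial $f(t)$ as in Lemma \ref{derivatives of f}, with strictly positive coefficients. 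Hence $f$ admits a unique critical point $t_u>0$, realized at its global maximum, and $f'(t_u)=0$ is equivalent to $t_u^{\nu}u(t_u\cdot)\in\bar{\mathcal{M}}_{\lambda,\nu}$. Since every $v\in\bar{\mathcal{M}}_{\lambda,\nu}$ has $t_v=1$, this immediately produces the second equality in the statement:
\[\inf_{u\in E(\R^N)\setminus\{0\}}\max_{t>0}I_\lambda(t^{\nu}u(t\cdot))=\inf_{v\in\bar{\mathcal{M}}_{\lambda,\nu}}I_\lambda(v).\]

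To identify this with $c_\lambda$, I would obtain the two inequalities separately. For $c_\lambda\leq\inf_u\max_t I_\lambda(t^{\nu}u(t\cdot))$, given $u\in E(\R^N)\setminus\{0\}$ and $T>0$ I would take the path $\gamma(s)=(sT)^{\nu}u((sT)\cdot)$ for $s\in(0,1]$, extended by $\gamma(0)=0$. The strict positivity of all four exponents of $sT$ appearing in $\|\gamma(s)\|_E$, which follows from $\nu>N/2$ and $\bar{k}<(4\nu-N-2)/2$, makes $\gamma$ continuous at $s=0$. For $T$ large, the dominant exponent $\nu(q+1)-N$ (the largest of the four thanks to $\nu>2/(q-1)$ and $\bar{k}>(\nu(3-q)-2)/2$) forces $I_{1/2,\lambda}(\gamma(1))<0$, so that $\gamma\in\Gamma_\lambda\cap\bar{\Gamma}_\lambda$; then by \eqref{mountainpasslevelforI} one gets $c_\lambda\leq\max_s I_\lambda(\gamma(s))=\max_{t>0}I_\lambda(t^{\nu}u(t\cdot))$ both for $q\in(2,3)$ and $q=3$.

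The reverse inequality $c_\lambda\geq\inf_{\bar{\mathcal{M}}_{\lambda,\nu}}I_\lambda$ is the heart of the matter. Since $I_{1/2,\lambda}\geq I_\lambda$ gives $\Gamma_\lambda\subset\bar{\Gamma}_\lambda$, it suffices to show that every $\gamma\in\bar{\Gamma}_\lambda$ intersects $\bar{\mathcal{M}}_{\lambda,\nu}$. First, the implicit function theorem applied to $f_u'(t_u)=0$ at the non-degenerate maximum (non-degeneracy coming from the strict concavity in the variable $s=t^p$ used in the proof of Lemma \ref{derivatives of f}) shows that $u\mapsto t_u$ is continuous on $E(\R^N)\setminus\{0\}$. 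Next, balancing the leading quadratic terms in $f_u'(t_u)=0$ against the $L^{q+1}$ term as $\|u\|_E\to 0$, together with $\nu(q-1)-2>0$, forces $t_u\to+\infty$; hence $t_{\gamma(s)}\to+\infty$ as $s\to 0^+$. On the other hand, since $f_{\gamma(1)}(0)=0>I_\lambda(\gamma(1))=f_{\gamma(1)}(1)$ and $f_{\gamma(1)}$ has a unique maximum, one must have $t_{\gamma(1)}<1$. The intermediate value theorem applied to the continuous map $s\mapsto t_{\gamma(s)}$ then supplies $s_0\in(0,1)$ with $t_{\gamma(s_0)}=1$, meaning $\gamma(s_0)\in\bar{\mathcal{M}}_{\lambda,\nu}$, so that $\max_s I_\lambda(\gamma(s))\geq I_\lambda(\gamma(s_0))\geq\inf_{\bar{\mathcal{M}}_{\lambda,\nu}}I_\lambda$. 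The main obstacle is precisely this crossing argument, in particular the asymptotic $t_u\to+\infty$ as $u\to 0$ together with the continuity of $u\mapsto t_u$, where the lower bound $\nu>2/(q-1)$ and the two-sided range for $\bar{k}$ granted by Lemma \ref{derivatives of f} play the decisive role.
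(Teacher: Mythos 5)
Your proof is correct and matches the paper on Claims~1 and~2 (the equality between $\inf_{\bar{\mathcal M}_{\lambda,\nu}} I_\lambda$ and $\inf_u\max_t I_\lambda(t^\nu u(t\cdot))$ via the uniqueness of the maximizer $\tau_u$ supplied by Lemma~\ref{derivatives of f}, and the upper bound $c_\lambda\le\inf_u\max_t$ via the radial dilation paths). Where you genuinely diverge from the paper is in the reverse inequality $c_\lambda\ge\inf_{\bar{\mathcal M}_{\lambda,\nu}}I_\lambda$ (Claim~3). The paper introduces the open region $A_{\lambda,\nu}=\{u:J_{\lambda,\nu}(u)>0\}\cup\{0\}$, shows directly by a Young/Sobolev estimate on $J_{\lambda,\nu}$ that $A_{\lambda,\nu}$ contains a small ball around the origin, deduces $I_\lambda\ge 0$ on $A_{\lambda,\nu}$ from the monotonicity of the fiber map on $[0,\tau_u]$, and then applies the intermediate value theorem to $s\mapsto J_{\lambda,\nu}(\gamma(s))$. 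You instead prove continuity of the maximizing parameter $u\mapsto t_u$ by the implicit function theorem (using the strict concavity of $f$ in the variable $s=t^p$, so $f''<0$ at the max), establish the asymptotics $t_u\to+\infty$ as $\|u\|_E\to0$ by comparing the leading Sobolev-controlled terms in $f_u'(t_u)=0$, note $t_{\gamma(1)}<1$ since $f_{\gamma(1)}(1)<0=f_{\gamma(1)}(0)$, and apply the intermediate value theorem to $s\mapsto t_{\gamma(s)}$. Both routes are sound. Yours is more Nehari-flavoured and makes the geometry of the fiber maps explicit, but it requires establishing $t_u\to\infty$ as $u\to 0$: be aware that you should bound $t_u$ from below using the full sum $a+b\simeq\|u\|_{H^1}^2$ against $d\lesssim\|u\|_{H^1}^{q+1}$ (neither $a$ nor $b$ alone controls $\|u\|_{H^1}$), and that the exponent that appears ($\nu(q-1)$ when $t_u\ge 1$, $\nu(q-1)-2$ when $t_u<1$) is positive by the choice of $\nu$. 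The paper's route sidesteps the continuity of $t_u$ entirely, which buys a somewhat more elementary (if less transparent) argument, at the cost of an auxiliary computation for the small-ball estimate on $J_{\lambda,\nu}$. One small imprecision in your write-up: $\|\gamma(s)\|_E$ involves three, not four, powers of $sT$, all strictly positive; the fourth exponent $\nu(q+1)-N$ belongs to the $L^{q+1}$ term, which is not part of the norm.
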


\begin{proof}
We first note that under the assumptions on the parameters, it holds that
\[\frac{4\nu-N-2}{2}>\frac{(\nu+1)(3-q)-2}{2}.\]
It follows from this and the lower bound assumption on $\bar{k}$ that we can always find at least one interval 
\[\left(\frac{\nu(3-q)-2}{2},\frac{4\nu-N-2}{2}\right), \quad \text{with } \nu>\max\left\{\frac{N}{2},\frac{2}{q-1}\right\},\]
that contains $\bar{k}$. We fix $\nu$ corresponding to such an interval. 
We break the remainder of the proof into a series of claims.\\

\noindent \textbf{Claim 1.} $\inf_{u\in E(\R^N)\setminus\{0\}}\max_{t\geq0} I_{\lambda}(t^{\nu} u(t\cdot))\leq \inf_{u\in \bar{\mathcal{M}}_{\lambda,\nu}}I_{\lambda}(u)$\\

\noindent To see this, let $u\in E(\R^N)\setminus \{0\}$ be fixed and consider the function 
\begin{equation}\label{definition of g least energy}
    \begin{split}
    g(t)&= I_{\lambda}(t^{\nu} u(t\cdot))\\
    &=a{t^{2\nu+2-N}}+bt^{2\nu-N}+c t^{4\nu-N-2-2\bar{k}}-d t^{\nu(q+1)-N}, \quad t\geq0,
\end{split} 
\end{equation}
where \[a=\frac{1}{2} \int_{\R^N}|\nabla u|^2, \,\, b=\frac{1}{2} \int_{\R^N} u^2,\,\, c=\frac{\lambda^2}{4} \int_{\R^N}\rho\phi_u u^2, \,\, d=\frac{1}{q+1} \int_{\R^N}|u|^{q+1}.\]
By Lemma \ref{derivatives of f}, it holds that $g$ has a unique critical point, $t=\tau_u$, corresponding to its maximum. Moreover, we can see that
\begin{align*}
    g'(t)&=\frac{\dif I_{\lambda}(t^{\nu} u(t\cdot))}{\dif t}\\
    &=\frac{2\nu+2-N}{2}\cdot t^{2\nu+1-N}\int_{\R^N}|\nabla u|^2+\frac{2\nu-N}{2}\cdot t^{2\nu-N-1} \int_{\R^N} u^2\\
      &\qquad +\frac{4\nu-N-2-2\bar{k}}{4}\cdot t^{4\nu-N-3-2\bar{k}}\cdot \lambda^2\int_{\R^N}\rho\phi_u u^2- \frac{\nu(q+1)-N}{q+1}\cdot t^{\nu(q+1)-N-1}\int_{\R^N}| u|^{q+1},  
\end{align*}
and so
\[g'(t)=0 \iff t^{\nu} u(t\cdot)\in \bar{\mathcal{M}}_{\lambda,\nu}.\]
Taken together, we have shown that for any $u\in E(\R^N)\setminus\{0\}$, there exists a unique $t=\tau_u$ such that $\tau_u^{\nu} u({\tau_u}\cdot)\in \bar{\mathcal{M}}_{\lambda,\nu}$ and the maximum of $I_{\lambda}(t^{\nu} u(t\cdot))$ for $t\geq0$ is achieved at $\tau_u$. Thus, it holds that
\begin{equation*}
    \begin{split}
    \inf_{u\in E(\R^N)\setminus\{0\}}\max_{t\geq0} I_{\lambda}(t^{\nu} u(t\cdot))&\leq \max_{t\geq0} I_{\lambda}(t^{\nu} u(t\cdot))
    =I_{\lambda}(\tau_u^{\nu} u({\tau_u}\cdot)), \quad \forall u\in E(\R^N)\setminus\{0\},    
    \end{split}
\end{equation*}
from which we can deduce that the claim holds.\\

\noindent \textbf{Claim 2.} ${c}_{\lambda} \leq \inf_{u\in E(\R^N)\setminus\{0\}}\max_{t\geq0} I_{\lambda}(t^{\nu}u(t\cdot)).$\\

\noindent By the assumptions on our parameters, we can deduce that $\nu(q+1)-N>2\nu+2-N$ and $\nu(q+1)-N>4\nu-N-2-2\bar{k}$. It follows that $I_{\lambda}(t^{\nu} u(t\cdot))< 0$ for every $u\in E(\R^N)\setminus \{0\}$ and $t$ large. Similarly, $I_{\frac{1}{2},\lambda}(t^{\nu} u(t\cdot))< 0$ for every $u\in E(\R^N)\setminus \{0\}$ and $t$ large. Therefore, we obtain
\[{c}_{\lambda} \leq \max_{t\geq0} I_{\lambda}(t^{\nu} u(t\cdot)),\quad \forall u\in E(\R^N)\setminus\{0\},\]
and the claim follows.\\

\noindent \textbf{Claim 3.} $\inf_{u\in \bar{\mathcal{M}}_{\lambda,\nu}}I_{\lambda}(u)\leq {c}_{\lambda}.$\\

\noindent We define
\[A_{\lambda,\nu}=\left\{u\in E(\R^N)\setminus \{0\}: J_{\lambda,\nu}(u) > 0\right\}\cup \{0\},\]
and first note that $A_{\lambda,\nu}$ contains a small ball around the origin. Indeed, arguing as in the proof of Lemma \ref{nonzeroweaklimitlemma}, we can show that for every $u\in E(\R^N)\setminus \{0\}$ and any $\beta>0$, we have
\begin{equation*}\label{localmin least energy low q}
\begin{split}
J_{\lambda,\nu} (u) \geq \frac{2\nu-N}{2}&||u||_{H^1(\R^N)}^2- \left(\frac{4\nu-N-2-2\bar{k}}{\omega}\right)\left(\frac{\beta-1}{4}\right)||u||^4_{H^1(\R^N)}\\
&+\left(\frac{4\nu-N-2-2\bar{k}}{\omega}\right)\left(\frac{\beta-1}{4\beta}\right) ||u||_{E(\R^N)}^4 -\frac{S_{q+1}^{-(q+1)}(\nu(q+1)-N)}{q+1}||u||_{H^1(\R^N)}^{q+1}.
\end{split}
\end{equation*}
\noindent We now pick $\delta=\left(\frac{(2\nu-N)(q+1)S_{q+1}^{q+1}}{4(\nu(q+1)-N)}\right)^{1/(q-1)}$ and note that since $\nu>\frac{N}{2}$, it follows that $\delta>0$. We assume $||u||_{E(\R^N)}<\delta$ and choosing $\beta>1$ sufficiently near $1$ we obtain
\begin{align*}
J_{\lambda,\nu} (u) &\geq \left[ \frac{2\nu - N}{4}-\left(\frac{4\nu-N-2-2\bar{k}}{\omega}\right)\left( \frac{\beta-1}{4} \right) \delta^2\right] ||u||_{H^1(\R^N)}^2 +\left(\frac{4\nu-N-2-2\bar{k}}{\omega}\right)\left(\frac{\beta-1}{4\beta}\right) ||u||_{E(\R^N)}^4\\
&\geq \left(\frac{4\nu-N-2-2\bar{k}}{\omega}\right)\left(\frac{\beta-1}{4\beta}\right) ||u||_{E(\R^N)}^4,
\end{align*}
which is strictly positive by our choice of $\nu$. This is enough to prove that $A_{\lambda,\nu}$ contains a small ball around the origin. Now, notice that if ${u}\in A_{\lambda,\nu}$, then $g'(1)>0$, where $g$ is defined in \eqref{definition of g least energy}. Since $g(0)=0$ and we showed in Claim $1$ that $\tau_u$ is the unique critical point of $g$ corresponding to its maximum, it follows that $1<\tau_u$. Using the facts that $I_{\lambda}(0)=0$ and $g'(t)=\frac{\dif I_{\lambda}(t^{\nu} u(t\cdot))}{\dif t}\geq0$ for all $t\in[0, \tau_u]$, we obtain that $I_{\lambda}(t^{\nu} u(t\cdot))\geq0$ for all $t\in[0, \tau_u]$ and, in particular, at $t=1$. Thus, we have shown $I_{\lambda}(u)\geq 0$, which also implies that $I_{\frac{1}{2},\lambda}(u)\geq 0$, for every $u\in A_{\lambda,\nu}$. Therefore, every $\gamma \in \Gamma_{\lambda}$ and every $\gamma \in \bar{\Gamma}_{\lambda}$, where $\Gamma_{\lambda}$ and $\bar{\Gamma}_{\lambda}$ are given by \eqref{gamma lambda} and \eqref{bar gamma lambda} respectively, has to cross $\bar{\mathcal{M}}_{\lambda,\nu}$, and so the claim holds. \\

\noindent \textbf{Conclusion.} Putting the claims together, it is clear that the statement holds.
\end{proof}

\subsection{Palais-Smale sequences}
We recall that a sequence $(\un)_{n\in\N} \subset E(\R^N)$ is said to be a Palais-Smale sequence for $I_{\lambda}$ at some level $c\in\R$ if
\begin{equation*}
    I(\un) \rightarrow c, \quad I'(\un) \rightarrow 0, \quad \text{as} \ n \rightarrow \infty.
\end{equation*}
If any such a sequence is relatively compact in the $E(\R^N)$ topology, then we say that the functional $I_{\lambda}$ satisfies the Palais-Smale condition at level $c$.

\begin{lemma}[\bf{Boundedness of Palais-Smale sequences}]\label{suff conds bounded PS large q}
	Assume $N = 3,4$, $\rho\in L^\infty_{\textrm{loc}}(\R^N)$ is nonnegative, $q\in[3,\tstar-1]$, and $(u_n)_{n\in\N} \subset E(\R^N)$ is a Palais-Smale sequence for $I_{\lambda}$ at any level $c>0$. Then, for any fixed $\lambda > 0$, $(u_n)_{n\in\N}$ is bounded in $E(\R^N)$.
\end{lemma}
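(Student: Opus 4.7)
The plan is to exploit the classical linear combination $I_\lambda(\un) - \frac{1}{q+1} I_\lambda'(\un)(\un)$ together with the fact that $I_\lambda'(\un) \to 0$ in $E^*(\R^N)$, which gives $I_\lambda'(\un)(\un) = \lilo \cdot \norme{\un}{\R^N}$. Using \eqref{double integral} to rewrite the nonlocal term as $\normLt{\nabla \phi_{\un}}{\R^N}^2$, a direct computation yields
\begin{equation*}
c + \lilo + \lilo \cdot \norme{\un}{\R^N} = \left(\frac{1}{2} - \frac{1}{q+1}\right) \normHil{\un}{\R^N}^2 + \lambda^2\left(\frac{1}{4} - \frac{1}{q+1}\right) \normLt{\nabla \phi_{\un}}{\R^N}^2.
\end{equation*}
The critical observation is that $q \geq 3$ forces $\frac{1}{q+1} \leq \frac{1}{4}$, so both coefficients on the right-hand side are nonnegative.

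When $q > 3,$ both coefficients are strictly positive, and I would immediately bound $\normHil{\un}{\R^N}^2$ and $\normLt{\nabla \phi_{\un}}{\R^N}^2$ separately by $c + \lilo + \lilo \cdot \norme{\un}{\R^N}$. Recalling the defining identity $\norme{\un}{\R^N}^2 = \normHil{\un}{\R^N}^2 + \lambda \bigl(\omega \normLt{\nabla \phi_{\un}}{\R^N}^2\bigr)^{1/2}$, a standard absorption argument concludes: if $\norme{\un}{\R^N}$ were unbounded along a subsequence, the left-hand side would grow quadratically in $\norme{\un}{\R^N}$ while the right-hand side only sublinearly, yielding a contradiction.

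The main obstacle will be the threshold case $q = 3,$ where the Coulomb coefficient above vanishes and only $\normHil{\un}{\R^N}^2 \leq 4c + \lilo + \lilo \cdot \norme{\un}{\R^N}$ is immediate. To control $\normLt{\nabla \phi_{\un}}{\R^N}^2$ I would return to $I_\lambda'(\un)(\un) = \lilo \cdot \norme{\un}{\R^N}$, which reads
\begin{equation*}
\lambda^2 \normLt{\nabla \phi_{\un}}{\R^N}^2 = \intrn |\un|^{4} - \normHil{\un}{\R^N}^2 + \lilo \cdot \norme{\un}{\R^N}.
\end{equation*}
The restriction $N \in \{3,4\}$ enters precisely here: since $4 \leq \tstar$ in these dimensions, the Sobolev embedding $H^1(\R^N) \hookrightarrow L^{4}(\R^N)$ gives $\intrn |\un|^{4} \leq C \normHil{\un}{\R^N}^{4}$. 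Combined with the bound already obtained on $\normHil{\un}{\R^N}$, this controls $\normLt{\nabla \phi_{\un}}{\R^N}^2$ polynomially in $\norme{\un}{\R^N}$, and the absorption argument from the $q > 3$ case concludes. For $N \geq 5$ the embedding $H^1 \hookrightarrow L^{4}$ fails, which is exactly where the present strategy would break down.
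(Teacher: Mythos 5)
Your proof is correct and relies on the same key ingredients as the paper's, namely the linear combination $I_\lambda(u_n) - \tfrac{1}{q+1}I_\lambda'(u_n)(u_n)$ and, for the threshold case $q=3$, the Sobolev embedding $H^1(\R^N) \hookrightarrow L^4(\R^N)$, which is precisely why $N \in \{3,4\}$ is required. The differences from the paper's proof --- replacing its explicit case analysis on which of $\|u_n\|_{H^1(\R^N)}$ and $\lambda\bigl(\int_{\R^N}\rho\,\phi_{u_n}u_n^2\bigr)^{1/2}$ diverges with a direct absorption via the norm identity $\|u_n\|_{E(\R^N)}^2 = \|u_n\|_{H^1(\R^N)}^2 + \lambda\omega^{1/2}\|\nabla\phi_{u_n}\|_{L^2(\R^N)}$, and applying the Sobolev bound to $I_\lambda'(u_n)(u_n)$ rather than to $I_\lambda(u_n)$ --- are cosmetic reorganizations of the same argument.
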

\noindent We stress that our assumption on $N$ yields $3\leq 2^*-1.$ 
\begin{proof}
For convenience, set $$a_n= ||u_n||_{H^1(\R^N)},\qquad b_n=\lambda\left( \intrn\phi_{u_n}u_n^2\rho(x)\right)^{\frac{1}{2}},\qquad c_q=\min{\left\{\left(\frac{q-1}{2}\right),\left(\frac{q-3}{4}\right)\right\}}$$
and note that, as $n\to+\infty,$

	\begin{equation}\label{whole equation bdd PS}
	C_1+o(1)||u_n||_{E(\R^N)}\geq (q+1)I_{\lambda}(u_n)-I_{\lambda}'(u_n)(u_n)=\left(\frac{q-1}{2}\right)a^2_n+ \left(\frac{q-3}{4}\right)b^2_n
	\end{equation}
	for some $C_1>0$. Assuming $||u_n||_{E(\R^N)}\to+\infty$, we show a contradiction in each of the cases:
	
	\begin{enumerate}[label=(\roman*)]
		\item $a_n$, $b_n\to +\infty$,
		\item $a_n$ bounded and $b_n \to+\infty$,
		\item $a_n\to+\infty$ and $b_n$ bounded.
	\end{enumerate}
		First consider $q>3$. If $b_n\to+\infty$, for large $n$ we have $b_n^2\geq b_n$ and by (\ref{whole equation bdd PS}) we get
	$$C_1+o(1)||u_n||_{E(\R^N)}\geq c_q||u_n||_{E(\R^N)}^2,\,n\to+\infty,
$$
a contradiction in case $(\textrm{i})$ and $(\textrm{ii})$. If $a_n\to+\infty$ and $b_n$ is bounded, then $||u_n||_{E(\R^N)}\sim a_n,$
hence
	$$
	C_1+o(1)a_n\geq c_q a_n^2,\,\,n\to+\infty,
	$$
	a contradiction in case $(\textrm{iii})$. This makes the proof complete for $q>3$.\\
	Consider now $q=3$. By Sobolev inequality we have
$$
	C_2 \geq I_{\lambda}(u_n)\geq \frac{1}{2} a_n^2+\frac{1}{4} b_n^2 -C_3 a_n^{4},
$$
for some $C_2$, $C_3>0,$
	which yields a contradiction in case $(\textrm{ii})$. On the other hand if $a_n\to+\infty$, from the same estimate we have
	\begin{equation}\label{lessim ineq}
	b_n\lesssim a_n^2,\,n\to+\infty.
	\end{equation}
	Note that \eqref{whole equation bdd PS} yields
	\begin{equation}\label{chain of inequality bdd PS 2}
	C_1+o(1)||u_n||_{E(\R^N)}\geq a_n^2,\,\,n\to+\infty.
	\end{equation} 
Dividing by $||u_n||_{E(\R^N)}=\left(a_n^2+b_n\right)^{\frac{1}{2}}$, we get
$\frac{a_n^4}{a_n^2+b_n}= o(1),\,n\to+\infty,$
	hence
	\[b_n\gtrsim a_n^4,\,\,n\to+\infty,\]
	a contradiction in case $(\textrm{iii})$. This and \eqref{lessim ineq}, give
	\[a_n^4\lesssim a_n^2,\,\,n\to+\infty,\]
	a contradiction in case $(\textrm{i})$. This completes the proof.
\end{proof}

\begin{lemma}[\bf{Lower bound uniform in $\lambda$ for PS sequences at level $c_\lambda$}]\label{nonzeroweaklimitlemma}
Assume $N= 3,4,5$, $\lambda > 0$, $q \in (2,\tstar -1]$, $\rho \in L^\infty_{\textrm{loc}}(\R^N)$ is nonnegative. There exists a universal constant $\alpha=\alpha(q) > 0$ independent of $\lambda$ such that for any Palais-Smale sequence $(\un)_{n \in \N}\subset E(\R^N)$ for $I_{\lambda}$ at level $c_{\lambda},$ it holds that
	\begin{equation*}
	\liminf_{n \rightarrow \infty} \normLqplusp{\un}{\R^N} \geq \alpha.
	\end{equation*}
\end{lemma}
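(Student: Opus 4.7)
The plan is to argue by contradiction: suppose $\liminf_n \normLqplusp{u_n}{\R^N} < \alpha$ for a constant $\alpha=\alpha(q)>0$ to be pinned down, and pass to a subsequence along which $\normLqplusp{u_n}{\R^N}\to L$ with $L<\alpha$. The central ingredient is the elementary identity
\[
2I_\lambda(u)-I'_\lambda(u)(u)\;=\;-\frac{\lambda^2}{2}\intrn \rho\,\phi_u u^2\;+\;\frac{q-1}{q+1}\normLqplusp{u}{\R^N},
\]
obtained by directly expanding and using that the nonlocal term is quartic. Since $\rho\phi_u u^2\geq 0$, this gives the one-sided bound $\frac{q-1}{q+1}\normLqplusp{u}{\R^N}\geq 2I_\lambda(u)-I'_\lambda(u)(u)$, and the whole proof reduces to evaluating the right-hand side along the sequence.

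First I would check that $\norme{u_n}{\R^N}$ is bounded, so that $I'_\lambda(u_n)(u_n)=o(1)$. Indeed, from $I_\lambda(u_n)\to c_\lambda$ one reads
\[
\frac{1}{2}\normHil{u_n}{\R^N}^2+\frac{\lambda^2}{4}\intrn \rho\,\phi_{u_n}u_n^2\;=\;c_\lambda+\tfrac{1}{q+1}\normLqplusp{u_n}{\R^N}+o(1),
\]
and since the $L^{q+1}$-term converges to $L<\infty$, both $\normHil{u_n}{\R^N}$ and $\intrn \rho\phi_{u_n}u_n^2$ stay bounded; by the very definition of $\|\cdot\|_{E(\R^N)}$ this forces $\norme{u_n}{\R^N}$ to be bounded. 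Plugging $I_\lambda(u_n)\to c_\lambda$ and $I'_\lambda(u_n)(u_n)\to 0$ into the identity yields $\tfrac{q-1}{q+1}L\geq 2c_\lambda$, i.e.\ $L\geq \tfrac{2(q+1)c_\lambda}{q-1}$.

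It remains to bound $c_\lambda$ from below by a \emph{universal} constant $c_0=c_0(q)>0$ independent of $\lambda$. My candidate is the classical mountain-pass level of the ``free'' functional $I_0(u):=\tfrac{1}{2}\normHil{u}{\R^N}^2-\tfrac{1}{q+1}\normLqplusp{u}{\R^N}$ on $H^1(\R^N)$, which is positive throughout the range $q\in(2,\tstar-1]$ by the standard Ambrosetti-Rabinowitz geometry and the Sobolev embedding. Since $E(\R^N)\subset H^1(\R^N)$ and $I_\lambda\geq I_0$ pointwise, every path used to compute $c_\lambda$ is a valid $H^1$-path and yields a larger maximum than $I_0\circ\gamma$. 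The endpoint condition transfers in both regimes: for $q\in[3,\tstar-1]$, any $\gamma\in\bar{\Gamma}_\lambda$ satisfies $I_0(\gamma(1))\leq I_\lambda(\gamma(1))<0$ immediately; for $q\in(2,3)$, the condition $I_{1/2,\lambda}(\gamma(1))<0$ forces $\tfrac{1}{2}\normHil{\gamma(1)}{\R^N}^2<\tfrac{1}{2(q+1)}\normLqplusp{\gamma(1)}{\R^N}$, hence $I_0(\gamma(1))<-\tfrac{1}{2(q+1)}\normLqplusp{\gamma(1)}{\R^N}<0$. Taking infima gives $c_\lambda\geq c_0$, and setting $\alpha(q)=\tfrac{2(q+1)c_0}{q-1}$ closes the contradiction.

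The main obstacle is precisely the universal lower bound on $c_\lambda$: the two definitions in \eqref{mountainpasslevelforI} use different admissible path classes, and in the low-$q$ regime the endpoint sign condition is imposed on the perturbed functional $I_{1/2,\lambda}$ rather than on $I_\lambda$, so one has to check separately in each case that paths transfer to the $I_0$ mountain-pass class in a way compatible with the pointwise comparison $I_\lambda\geq I_0$ used to compare maxima.
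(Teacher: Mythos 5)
Your proof is correct, and it takes a genuinely different route from the paper in both halves. For the uniform lower bound $c_\lambda \geq c_0(q) > 0$, you exploit the pointwise domination $I_\lambda \geq I_0$ by the free NLS functional and then inherit positivity from the classical mountain-pass level $c_0$ of $I_0$ on $H^1(\R^N)$; the paper instead derives the bound from scratch, using Young's inequality on the quartic nonlocal term to show $I_\lambda(u) \geq \tfrac{1}{\omega}\bigl(\tfrac{\beta-1}{4\beta}\bigr)\|u\|_{E}^4$ on a small $E$-ball, so that $I_\lambda \geq \underline{c}(q) > 0$ on the sphere of radius $\delta/2$ and $I_\lambda \geq 0$ on the surrounding ball, forcing every admissible path to cross that sphere. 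Your route is shorter but outsources the positivity of $c_0$, while the paper's produces a self-contained explicit constant. The endpoint-transfer discussion you include (immediate for $\bar\Gamma_\lambda$ when $q\geq 3$; via $I_{\frac12,\lambda}\geq I_{1,\lambda}$ when $q<3$) is exactly the right place to be careful and is handled correctly. For the $L^{q+1}$ lower bound, you test with the combination $2I_\lambda - I_\lambda'(\cdot)(\cdot)$, which kills the $H^1$ term and leaves the nonlocal term with a favorable sign so it can simply be dropped; you then run a contradiction argument which incidentally yields the boundedness of $\|u_n\|_{E}$ needed to make $I_\lambda'(u_n)(u_n) = o(1)$. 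The paper uses $I_\lambda - I_\lambda'(\cdot)(\cdot)$, which leaves a negative $H^1$-quadratic and a negative nonlocal-quartic term, and then handles $\|I_\lambda'(u_n)\|_{E'}\|u_n\|_{E}$ by a Young-inequality absorption against those quadratic and quartic contributions; this cleverly sidesteps any need to establish boundedness of the sequence and thus avoids arguing by contradiction. Your resulting constant $\alpha = 2(q+1)c_0/(q-1)$ is, if anything, slightly more generous than the paper's $\underline{c}$, which is harmless since the lemma only asks for some positive $\lambda$-independent constant.
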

\begin{proof}
	For every $u\in E(\R^N),$ denoting $S_{q+1}$ the best constant such that $S_{q+1}\|u\|_{L^{q+1}(\mathbb R^N)}\leq \| u\|_{H^1(\mathbb R^N)},$ we have 
\begin{align*}
I_{\lambda} (u) \geq \frac{1}{2}||u||_{H^1(\R^N)}^2+\frac{\lambda^2}{4}\int_{\R^3}\rho \phi_u u^2  -\frac{S_{q+1}^{-(q+1)}}{q+1}||u||_{H^1(\R^N)}^{q+1}.
\end{align*}
\noindent Since $\omega\lambda^2\int_{\R^N} \rho \phi_u u^2 = \left(||u||^2_{E(\R^N)} - ||u||_{H^1(\R^N)}^2\right)^2,$ estimating the term $||u||^2_{E(\R^N)} ||u||_{H^1(\R^N)}^2$ with Young's inequality, we have for any $\beta>0$
\begin{align}\label{localmin}
I_{\lambda} (u) \geq \frac{1}{2}||u||_{H^1(\R^N)}^2- \frac{1}{\omega}\left(\frac{\beta-1}{4}\right)||u||^4_{H^1(\R^N)}+\frac{1}{\omega}\left(\frac{\beta-1}{4\beta}\right) ||u||_{E(\R^N)}^4 -\frac{S_{q+1}^{-(q+1)}}{q+1}||u||_{H^1(\R^N)}^{q+1}.\nonumber
\end{align}
\noindent We now pick $\delta=\left(\frac{(q+1)S_{q+1}^{q+1}}{4}\right)^{1/(q-1)}$ and assume $||u||_{E(\R^N)}<\delta$, which also implies that $||u||_{H^1(\R^N)}<\delta$. Then, choosing $\beta>1$ sufficiently near $1$ we obtain
\begin{align*}
I_{\lambda} (u) &\geq \left[ \frac{1}{4}-\frac{1}{\omega}\left( \frac{\beta-1}{4} \right) \delta^2\right] ||u||_{H^1(\R^N)}^2 +\frac{1}{\omega}\left(\frac{\beta-1}{4\beta}\right) ||u||_{E(\R^N)}^4\\
&\geq \frac{1}{\omega}\left(\frac{\beta-1}{4\beta}\right) ||u||_{E(\R^N)}^4.
\end{align*}
 We note here that both $\delta$ and $\beta$ depend on $q$ but not on $\lambda.$ Thus, we have shown that if $||u||_{E(\R^N)}=\delta/2$, then $I_{\lambda}(u)\geq\cbarlow$, for some $\cbarlow>0$ independent of $\lambda.$  So, since every path connecting the origin to where the functional $I_{\lambda}$ is negative crosses the sphere of radius $\delta/2$, it follows that $$\clambda \geq \cbarlow \text{ for every } \lambda \geq 0.$$ 
	For convenience, set $$a_n= ||u_n||_{H^1(\R^N)},\qquad b^2_n=\lambda\left( \intrn\phi_{u_n}u_n^2\rho(x)\right)^{\frac{1}{2}},$$
where $(u_n)_{n\in \mathbb N}$ is an arbitrary Palais-Smale sequence at the level $c_\lambda.$ It holds that
	\begin{align*}
	\clambda +o(1)-\|I_\lambda'(u_n)\|_{E'(\R^N)}\|u_n\|_{E(\R^N)} &\leq  I_{\lambda} (\un) -  I^{\prime}_{\lambda} (\un)  \un \\
	&= \left(\frac{1}{2}-1\right)a_n^2+\left(\frac{1}{4}-1\right)b_n^4+\left(1-\frac{1}{q+1}\right)\|u_n\|_{q+1}^{q+1}.
	\end{align*}
By concavity note that $\|u_n\|_{E(\R^N)}\leq a_n+b_n,$ hence the above yields 
	$$\cbarlow +o(1)\underbrace{-\|I_\lambda'(u_n)\|_{E'(\R^N)}(a_n+b_n)+\frac{1}{2}\left(a_n^2+b_n^4\right)}_{c_n}\leq\|u_n\|_{q+1}^{q+1},$$
	and it is easy to see that $\liminf c_n\geq0.$ The conclusion follows then with $\alpha:=\cbarlow.$

	\color{black}
\end{proof}

\section{The case of $\rho$ vanishing on a region}\label{vanishing rho section}	
\noindent Throughout this section we will make the assumption that
\begin{enumerate}[label=$\mathbf{(\rho_{\arabic*})}$]
	\item \label{vanishing_rho} $\rho^{-1} (0)$ has non-empty interior and there exists $\overline{M} > 0$ such that
	\begin{equation*}
		\abs{x \in \R^N : \rho(x) \leq \overline{M}} < \infty.
	\end{equation*}
\end{enumerate}

\noindent In what follows it is convenient to set 
	\begin{equation*}
		A(R) = \{ x \in \R^N : \abs{x} > R, \ \rho (x) \geq \overline{M} \},
	\end{equation*}
	\begin{equation*}
		B(R) = \{ x \in \R^N : \abs{x} > R, \ \rho (x) < \overline{M} \},
	\end{equation*}
for any $R>0$.
\begin{lemma}[\bf{Key vanishing property}]\label{measureofBgoingtozerolemma}
Suppose $\rho$ is a measurable function and that for some $\overline{M} \in \R$ it holds that
$$
\overline{B}:=\abs{x \in \R^N : \rho(x) < \overline{M}} < \infty.
$$
Then $$\lim\limits_{R\rightarrow\infty}|B(R)|=0.$$
\end{lemma}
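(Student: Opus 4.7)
The plan is to reduce the statement to a standard continuity-of-measure argument. First, observe that $B(R)$ is a decreasing family in $R$ in the sense of set inclusion: if $R_1 \leq R_2$, then $\{|x| > R_2\} \subseteq \{|x| > R_1\}$, and therefore
\begin{equation*}
B(R_2) = \{|x|>R_2,\ \rho(x)<\overline{M}\} \subseteq \{|x|>R_1,\ \rho(x)<\overline{M}\} = B(R_1).
\end{equation*}
Moreover, every $B(R)$ is contained in the set $B_0 := \{x \in \R^N : \rho(x) < \overline{M}\}$, whose Lebesgue measure is finite by hypothesis, so in particular $|B(R)| \leq \overline{B} < \infty$ for every $R > 0$.

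Next I would check that the sets exhaust nothing in the limit: for every fixed $x \in \R^N$, once $R > |x|$ the point $x$ is no longer in $B(R)$, so
\begin{equation*}
\bigcap_{R>0} B(R) = \emptyset.
\end{equation*}
Combining this with the previous observations, continuity of Lebesgue measure from above for a decreasing sequence of sets of finite measure yields
\begin{equation*}
\lim_{R\to\infty} |B(R)| = \Bigl|\bigcap_{R>0}B(R)\Bigr| = 0.
\end{equation*}

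Alternatively, one can phrase the same argument via dominated convergence applied to the indicator functions $\chi_{B(R)}$: they converge pointwise to $0$ as $R\to\infty$ and are bounded above by $\chi_{B_0}$, which is integrable by assumption, so $|B(R)| = \int_{\R^N}\chi_{B(R)}\to 0$.

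There is no real obstacle here; the only point to be careful about is to observe that finiteness of $\overline{B}$ is exactly what allows the passage to the limit (continuity from above, or equivalently the existence of an integrable dominating function), and this is precisely the hypothesis we are given.
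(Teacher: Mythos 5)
Your proof is correct and, in fact, contains the paper's argument verbatim: the paper's own proof is precisely your "alternative" phrasing via dominated convergence applied to $\chi_{B(R)}$, dominated by the indicator of the finite-measure set $\{\rho < \overline{M}\}$. Your primary phrasing via continuity of Lebesgue measure from above for a decreasing family of finite-measure sets is an equivalent elementary route to the same conclusion.
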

\begin{proof}
The conclusion follows by the dominated convergence theorem as $B(R)\subseteq \overline{B}$ yields $$|B(R)|=\int_{\overline{B}}\chi_{B(R)}(x)\dif x\leq |\overline{B}|.$$ 
\end{proof}

\begin{lemma}[\bf{Uniform bounds in $\lambda$ for PS sequences at level $c_\lambda$}]\label{boundedPSforvanishingrho}
	Assume $N = 3,4$, $\rho \in L^\infty_{\textrm{loc}}(\R^N)$ is nonnegative, satisfying \ref{vanishing_rho}, $q\in[3,\tstar-1]$, $\lambda > 0.$ There exists a universal constant $\overline{C}=\overline{C}(q,N)>0$ independent of $\lambda,$  such that for any Palais-Smale sequence $(u_n)_{n\in\N} \subset E(\R^N)$ for $I_{\lambda}$ at level $c_{\lambda}$ it holds that $\|u_n\|_{E(\R^N)}<\overline{C}.$
\end{lemma}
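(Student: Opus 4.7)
The plan is to combine the argument of Lemma \ref{suff conds bounded PS large q} with a uniform-in-$\lambda$ upper bound on the mountain-pass level $c_\lambda$, the latter being the key new ingredient and the central role of assumption \ref{vanishing_rho}.

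\textbf{Step 1: Uniform upper bound on $c_\lambda$.} Under \ref{vanishing_rho}, the set $\rho^{-1}(0)$ has non-empty interior, so I can fix once and for all a single $u_0 \in C^\infty_c(\R^N)\setminus\{0\}$ with $\mathrm{supp}(u_0) \subset \rho^{-1}(0)$. Then $\rho u_0^2 \equiv 0$, hence $\phi_{t u_0} \equiv 0$, and
\begin{equation*}
I_\lambda(t u_0) = \frac{t^2}{2}\|u_0\|_{H^1(\R^N)}^2 - \frac{t^{q+1}}{q+1}\|u_0\|_{L^{q+1}(\R^N)}^{q+1},
\end{equation*}
which is a polynomial in $t$ \emph{independent of} $\lambda$. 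Choosing $T>0$ large enough so that $I_\lambda(T u_0)<0$, the path $\gamma_0(s) = s T u_0$ belongs to $\bar\Gamma_\lambda$ for every $\lambda>0$, so
\begin{equation*}
c_\lambda \leq \max_{s\in[0,1]} I_\lambda(\gamma_0(s)) =: M_0,
\end{equation*}
with $M_0$ depending only on $u_0$ and $q$, hence only on $q$ and $N$.

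\textbf{Step 2: Adapting the proof of Lemma \ref{suff conds bounded PS large q}.} Set $a_n = \|u_n\|_{H^1(\R^N)}$ and $b_n = \lambda \bigl(\int_{\R^N}\rho\phi_{u_n}u_n^2\bigr)^{1/2}$, so that $\|u_n\|_{E(\R^N)}^2 = a_n^2 + \omega^{1/2} b_n$ and in particular $\|u_n\|_{E(\R^N)} \leq a_n + \omega^{1/4} b_n^{1/2}$. From $I_\lambda(u_n) = c_\lambda + o(1) \leq M_0 + o(1)$ and $I_\lambda'(u_n)(u_n) = o(1)\|u_n\|_{E(\R^N)}$ we get
\begin{equation*}
(q+1) M_0 + o(1)\bigl(1 + \|u_n\|_{E(\R^N)}\bigr) \geq \frac{q-1}{2}a_n^2 + \frac{q-3}{4}b_n^2,
\end{equation*}
uniformly in $\lambda$. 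For $q>3$ the three-cases argument of Lemma \ref{suff conds bounded PS large q} applies verbatim with $c$ replaced by $M_0$, giving uniform bounds on $a_n$ and $b_n$.

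\textbf{Step 3: The critical case $q=3$.} Here only $a_n^2 \leq 4M_0 + o(1)(1+\|u_n\|_{E(\R^N)})$ is obtained directly. To control $b_n$, I combine $I_\lambda(u_n) \leq M_0 + o(1)$ with the Sobolev embedding $\|u_n\|_{L^4}^4 \leq S\, a_n^4$ (valid for $N=3,4$ since $4\leq 2^\ast$):
\begin{equation*}
\frac{b_n^2}{4} \leq M_0 + o(1) + \frac{S}{4}a_n^4 - \frac{a_n^2}{2},
\end{equation*}
so $b_n \lesssim 1 + a_n^2$ and consequently $\|u_n\|_{E(\R^N)} \lesssim 1 + a_n$. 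Plugging this back into $a_n^2 \leq 4M_0 + o(1)(1+\|u_n\|_{E(\R^N)})$ yields $a_n$ bounded by a universal constant, and then $b_n$ is bounded as well; thus $\|u_n\|_{E(\R^N)} \leq \overline{C}(q,N)$.

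\textbf{Main obstacle.} The natural boundedness argument of Lemma \ref{suff conds bounded PS large q} yields constants that depend on the Palais-Smale level, and a priori $c_\lambda$ may blow up with $\lambda$. What makes the conclusion possible \emph{uniformly} in $\lambda$ is precisely the geometric content of \ref{vanishing_rho}: the existence of an open ball on which $\rho$ vanishes gives a $\lambda$-independent competitor $u_0$ for the mountain-pass infimum, so the nonlocal term contributes nothing along the path $t u_0$. All other ingredients (Sobolev, the identity $\|u\|_E^2 = a^2 + \omega^{1/2}b$) are already $\lambda$-free.
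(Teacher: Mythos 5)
Your proof is correct and takes essentially the same approach as the paper: a $\lambda$-independent upper bound on $c_\lambda$ via a test function supported in $\rho^{-1}(0)$, then extracting bounds on $a_n$ and $b_n$ from the Nehari-type combination $(q+1)I_\lambda(u_n) - I'_\lambda(u_n)(u_n)$, with a Sobolev bootstrap for the critical case $q=3$. The only presentational difference is that the paper first invokes Lemma \ref{suff conds bounded PS large q} to conclude $I'_\lambda(u_n)\cdot u_n \to 0$ and then reads off the uniform bound from the limit $c_\lambda = \lim\bigl[(\tfrac12-\tfrac1{q+1})a_n^2 + \lambda^2(\tfrac14-\tfrac1{q+1})\int\rho\phi_{u_n}u_n^2\bigr] \leq \overline{c}$, whereas you carry the error term $o(1)\|u_n\|_{E(\R^N)}$ explicitly and close the bootstrap without appealing to that lemma; both are valid and yield the same constants.
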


\begin{proof}
	Let $v \in C^{\infty}_c (\R^N)\setminus \{0\}$ have support in $\rho^{-1} (0)$. Pick $t_v>0$ such that $I_0(t_v v)<0$ and set $v_t=tt_v v.$ Then, by definition of $\clambda$,
	\begin{equation}\label{uniform upper bound mp level}
		\clambda \leq \max_{t \in[0,1]} I_{\lambda} (v_t)=\max_{t\geq 0}I_0(tv)=:\overline{c}\,\,\footnote{In fact this bound holds in dimensions $N= 3,4,5$ and every $q\in(2,2^*-1].$}.
	\end{equation}
	Note that since $(u_n)$ is bounded by Lemma \ref{suff conds bounded PS large q}, it holds that
	\begin{align*}
	\clambda &= \lim_{n \rightarrow \infty} ( I_{\lambda} (\un) - \frac{1}{q+1} I^{\prime}_{\lambda} (\un) \cdot \un )\\
	&= \lim_{n \rightarrow \infty} \Big( \Big( \frac{1}{2} - \frac{1}{q+1} \Big) \|\un\|^2_{H^1(\mathbb R^N)}+\lambda^2 \Big( \frac{1}{4} - \frac{1}{q+1} \Big) \int_{\R^N}\phi_{\un} \rho(x) \un^2 \Big).
	\end{align*}
	The conclusion follows immediately in the case $q>3$.
	For $q=3$ the above yields a uniform bound independent on $\lambda$ for the $H^1(\R^N)$ norm and hence for the $L^{q+1}(\R^N)$ norm as well by Sobolev's inequality. Since
	$$\lambda^2\limsup_{n \rightarrow\infty}   \int_{\R^N} \phi_{\un} \un^2 \rho(x)\leq 4 \left(c_\lambda+\limsup_{n \rightarrow\infty}\left( \|u_n\|^2_{H^1(\R^N)}+\|u_n\|^{q+1}_{L^{q+1}(\R^N)}\right)\right),$$
	this concludes the proof.
\end{proof}

\begin{lemma}[\bf{Control on the tails of uniformly bounded sequences}]\label{intoutsideballlemma}
	Assume $N = 3,4,5,$ $\rho \in L^{\infty}_{\textrm{loc}}(\R^N)$ is nonnegative,  satisfying \ref{vanishing_rho}, and $(\un)_{n \in \N}\subset E(\R^N)$ is bounded uniformly with respect to $\lambda$. Then, for every $\beta > 0$ there exists $\lambda_{\beta} > 0$ and $R_{\beta} > 0$ such that for $\lambda > \lambda_{\beta}$ and $R > R_{\beta}$,
	\begin{equation*}
		||{\un}||_{L^{3}(\R^N \setminus B_R)}^{3} < \beta.
	\end{equation*}
\end{lemma}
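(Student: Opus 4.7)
The plan is to split the tail region as $\R^N \setminus B_R = A(R) \cup B(R)$ and to handle the two pieces with complementary tools. The structural observation driving the proof is that the definition of $\norme{\cdot}{\R^N}$ together with the identities in \eqref{double integral} give
$$
\lambda\sqrt{\omega}\,\|\nabla \phi_{u_n}\|_{L^2(\R^N)}\leq \norme{u_n}{\R^N}^2.
$$
Since $(u_n)_{n\in\N}$ is bounded in $E(\R^N)$ uniformly in $\lambda$ by some constant $C$, the Coulomb-Sobolev estimate of Lemma \ref{weightedL3boundlemma} will therefore decay like $1/\lambda$ on the region where $\rho$ is bounded below, whereas on the remaining region $B(R)$ one exploits instead the vanishing of $|B(R)|$ furnished by Lemma \ref{measureofBgoingtozerolemma}.

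On $A(R)$ I would use $\rho\geq \overline{M}$ combined with Lemma \ref{weightedL3boundlemma} to write
$$
\overline{M}\int_{A(R)} |u_n|^3 \leq \intrn \rho(x)|u_n|^3 \leq \|\nabla u_n\|_{L^2(\R^N)}\,\|\nabla \phi_{u_n}\|_{L^2(\R^N)} \leq \frac{\norme{u_n}{\R^N}^3}{\lambda\sqrt{\omega}} \leq \frac{C^3}{\lambda\sqrt{\omega}}.
$$
Taking $\lambda_\beta$ large enough that the right-hand side is smaller than $\overline{M}\beta/2$ for every $\lambda>\lambda_\beta$ makes this contribution bounded by $\beta/2$ uniformly in $n$ and $R$.

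On $B(R)$ I would apply Hölder's inequality with exponents $2^*/3$ and $2^*/(2^*-3)$ (which is legitimate since $N\in\{3,4,5\}$ forces $2^*>3$) and the Sobolev embedding $E(\R^N)\hookrightarrow H^1(\R^N)\hookrightarrow L^{2^*}(\R^N)$, obtaining
$$
\int_{B(R)} |u_n|^3 \leq \|u_n\|_{L^{2^*}(\R^N)}^3\,|B(R)|^{\,1-3/2^*} \leq C'\,|B(R)|^{\,1-3/2^*},
$$
for a constant $C'$ independent of $n$, $\lambda$, and $R$. Since $1-3/2^*>0$, Lemma \ref{measureofBgoingtozerolemma} ensures $|B(R)|\to 0$ as $R\to\infty$, so one can choose $R_\beta$ so that the right-hand side is smaller than $\beta/2$ whenever $R>R_\beta$. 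Adding the two estimates gives the statement. I do not anticipate any substantive obstacle: the argument is essentially a two-region bookkeeping, and the nontrivial ingredient --- that the $E$-norm encodes a factor of $\lambda$ in front of the nonlocal term, so that Coulomb-Sobolev becomes effective in the large-$\lambda$ regime --- has already been isolated in the displayed inequality above.
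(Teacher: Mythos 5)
Your proposal is correct and follows essentially the same route as the paper: decompose the tail into $A(R)$ and $B(R)$, control $A(R)$ via the Coulomb--Sobolev inequality of Lemma \ref{weightedL3boundlemma} (noting that the $E$-norm carries a factor of $\lambda$ in front of the nonlocal term so that this piece decays like $1/\lambda$), and control $B(R)$ via H\"older, Sobolev embedding, and the vanishing of $|B(R)|$ from Lemma \ref{measureofBgoingtozerolemma}. The only difference from the paper's proof is a slightly more explicit bookkeeping of the constant $\lambda\sqrt{\omega}$, which is a presentational point rather than a mathematical one.
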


\begin{proof}
By Lemma \ref{weightedL3boundlemma} we have
\begin{equation}
\lambda \int_{\R^N} \rho(x) \abs{\un}^3  \leq C \|u_n\|_{E(\R^N)}^3 \leq C', 
\end{equation}
for some positive constant $C'$ independent of $\lambda.$ Hence
$$
		\int_{A(R)} \abs{\un}^3 
		\leq \frac{C'}{\lambda \overline{M}} .
$$
Also observe that by H\"older's inequality and Lemma \ref{measureofBgoingtozerolemma} we have
	\begin{align*}
		\int_{B(R)} \abs{\un}^3 &\leq \Big( \int_{\R^N} \abs{\un}^{2^*} \Big)^{\frac{3}{2^*}} \Big( \int_{B(R)} 1 \Big)^{\frac{2^*-3}{2^*}}\\
		&\leq C'' \norme{\un}{\R^N}^3 \cdot \abs{B(R)}^{\frac{2^*-3}{2^*}}\\
		&\leq C''' \abs{B(R)}^{\frac{2^*-3}{2^*}}\rightarrow 0 .
	\end{align*}
	as $R\rightarrow \infty,$ again for some uniform constant $C'''>0.$ Note that our assumption on $N$ yields $3<2^*.$ This is enough to conclude the proof.
\end{proof}

\begin{proposition}[{\bf Nonzero weak limits of PS sequences at level $c_\lambda$ for $\lambda$ large}]\label{positivityofweakuthm}
    Let $N=3$, $\rho \in L^\infty_{\textrm{loc}}(\R^N)$ be nonnegative, satisfying \ref{vanishing_rho}, and $q \in [3, 5).$  There exist universal positive constants $\lambda_0=\lambda_0(q,\overline{M})$ and $\alpha_0=\alpha_0(q),$ such that if for some $\lambda\geq\lambda_0,$ $u\in E(\R^3)$ is the weak limit of a Palais-Smale sequence for $I_{\lambda}$ at level $c_{\lambda},$ then it holds that
    \begin{equation*}
         \int_{ \R^3}|u|^3\dif x > \alpha_0 .
    \end{equation*}
\end{proposition}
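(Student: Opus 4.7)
The plan is to combine the uniform (in $\lambda$) boundedness of Palais-Smale sequences at the mountain-pass level (Lemma \ref{boundedPSforvanishingrho}) with the universal $L^{q+1}$ lower bound (Lemma \ref{nonzeroweaklimitlemma}) and the $L^3$-tail control (Lemma \ref{intoutsideballlemma}), transferring the mass from $L^{q+1}$ to $L^3$ via Hölder interpolation and then extracting a nontrivial weak limit by Rellich--Kondrachov compactness on bounded domains.

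Let $(u_n)_{n\in\N}$ be a Palais-Smale sequence for $I_\lambda$ at level $c_\lambda$ with $u_n\weakly u$ in $E(\R^3)$. First, Lemma \ref{boundedPSforvanishingrho} gives $\|u_n\|_{E(\R^3)}\leq \overline{C}$ with $\overline{C}=\overline{C}(q)$ independent of $\lambda$, so by the Sobolev embedding $H^1(\R^3)\hookrightarrow L^6(\R^3)$ there is a universal $C_1=C_1(q)$ with $\|u_n\|_{L^6(\R^3)}\leq C_1$. Second, Lemma \ref{nonzeroweaklimitlemma} yields $\liminf_{n\to\infty}\|u_n\|_{L^{q+1}(\R^3)}^{q+1}\geq \alpha(q)>0$.

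Next I would use the Hölder interpolation inequality between $L^3$ and $L^6$: writing $p=q+1\in[4,6)$ and $\theta=(6-p)/p\in(0,\tfrac12]$, one has
\begin{equation*}
\|u_n\|_{L^p(\R^3)}^{p}\leq \|u_n\|_{L^3(\R^3)}^{6-p}\,\|u_n\|_{L^6(\R^3)}^{2p-6}\leq C_1^{2p-6}\,\|u_n\|_{L^3(\R^3)}^{5-q},
\end{equation*}
since $5-q=6-p>0$. Combined with the lower bound on $\|u_n\|_{L^p}^p$, this produces a universal constant $\gamma=\gamma(q)>0$ such that, eventually in $n$,
\begin{equation*}
\int_{\R^3}|u_n|^3\,\dx\geq \gamma.
\end{equation*}

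The decisive step is to prevent this $L^3$ mass from escaping to infinity. I would apply Lemma \ref{intoutsideballlemma} with the choice $\beta=\gamma/4$: since $(u_n)$ is bounded uniformly in $\lambda$, there exist $\lambda_0=\lambda_0(q,\overline{M})$ and $R_0=R_0(q,\overline{M})$ so that, for every $\lambda\geq\lambda_0$,
\begin{equation*}
\int_{\R^3\setminus B_{R_0}}|u_n|^3\,\dx<\frac{\gamma}{4}\qquad\text{for all large }n,
\end{equation*}
and therefore $\int_{B_{R_0}}|u_n|^3\,\dx\geq\gamma/2$ eventually. Because $(u_n)$ is bounded in $H^1(B_{R_0})$, Rellich--Kondrachov gives $u_n\to u$ strongly in $L^3(B_{R_0})$ (after extraction), so passing to the limit yields $\int_{B_{R_0}}|u|^3\,\dx\geq\gamma/2$, and we conclude with $\alpha_0:=\gamma/2=\alpha_0(q)$.

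The main obstacle is the tail control in the third step: without knowing $\rho$ at infinity, the bounded sequence could in principle concentrate $L^3$-mass in the sublevel set $\{\rho<\overline M\}$ or escape to where $\rho$ is large. It is exactly Lemma \ref{intoutsideballlemma}, built on the Coulomb--Sobolev estimate of Lemma \ref{weightedL3boundlemma}, that handles both cases (using $\lambda\overline M$ to dominate on $A(R)$ and the smallness of $|B(R)|$ from \ref{vanishing_rho} on the complement), provided $\lambda$ is chosen large enough in terms of the target threshold $\gamma/4$; this is where the dependence $\lambda_0=\lambda_0(q,\overline M)$ enters, while $\alpha_0$ remains independent of $\lambda$.
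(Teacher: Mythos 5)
Your proposal is correct and follows essentially the same route as the paper's own proof: the same uniform bound from Lemma \ref{boundedPSforvanishingrho}, the same $L^{q+1}$ lower bound from Lemma \ref{nonzeroweaklimitlemma}, the same $L^3$--$L^6$ interpolation (the paper's integral form is exactly your Hölder inequality with $p=q+1$), the same tail control via Lemma \ref{intoutsideballlemma}, and the same local Rellich argument to pass to the weak limit. The only cosmetic difference is your bookkeeping with $\gamma/4$ and $\gamma/2$ where the paper uses $\alpha$ and $\alpha/2$.
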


\begin{proof} 
Let $(u_n)_{n\in\mathbb N}\subset E(\R^3)$ be an arbitrary Palais-Smale sequence at level $c_\lambda.$ Note that we can pick $\alpha(q) > 0$ independent of $\lambda$ and of the sequence such that
	\begin{equation*}
\liminf_{n \rightarrow \infty}\|u_n\|^3_{L^3(\R^3)} \geq \alpha(q).
	\end{equation*}
	Indeed by interpolation 
	$$\int_{\R^3}|u_n|^{q+1}\leq\Big(\int_{\R^3}|u_n|^3\Big)^{\frac{5-q}{3}}\Big(\int_{\R^3}|u_n|^6\Big)^{\frac{q-2}{3}}$$
	and the claim follows by Sobolev inequality and the uniform bound given by Lemma \ref{boundedPSforvanishingrho} and by Lemma \ref{nonzeroweaklimitlemma}. In particular, recall that by Lemma \ref{boundedPSforvanishingrho}, there exists a universal constant $\overline{C}=\overline{C}(q,N)>0$ independent of $\lambda$ and of the sequence, such that $\|u_n\|_{E(\R^N)}<\overline{C}.$ By Lemma \ref{intoutsideballlemma}, it follows than that we can pick $\lambda_0(q,\overline{M})$  and  $R_\alpha>0$ such that such that for every $\lambda\geq \lambda_0$ and every $R>R_\alpha$ we have $$\limsup_{n\rightarrow\infty} \norm{\un}^{3}_{L^{3} (\R^3 \setminus B_R)} < \frac{\alpha}{2}.$$  By the classical Rellich theorem, passing if necessary to a subsequence, we can assume that $u_n\rightarrow u$ in $L^{3}_{\textrm{loc}}(\mathbb R^3).$ Therefore, for every $R>R_\alpha$, we have
    \begin{equation*}
     \norm{u}^{3}_{L^{3} (B_R)} =\lim_{n\rightarrow \infty}\norm{u_n}^{3}_{L^{3} (B_R)}\geq \liminf\limits_{n\rightarrow \infty}\norm{\un}^{3}_{L^{3} (\mathbb R^3)}  -\limsup\limits_{n\rightarrow \infty}\norm{\un}^{3}_{L^{3} (\R^3 \setminus B_R)}   > \frac{\alpha}{2}.
    \end{equation*}
    The conclusion follows with $\alpha_0=\alpha/2.$
\end{proof}

\begin{proposition}[\bf{Energy estimates for $\lambda$ large}]\label{lambdaone} Let $N=3$, $\rho \in L^\infty_{\textrm{loc}}(\R^3)$ be nonnegative, satisfying  \ref{vanishing_rho}, and $q \in [3, 5).$  Let $\lambda_0$ be defined as in Proposition \ref{positivityofweakuthm}.  There exists a universal constant $\lambda_1=\lambda_1(q,\overline{M})>0$ such that, if $\lambda\geq\max\left(\lambda_0,\lambda_1\right)$ and $u$ is the nontrivial weak limit in $E(\R^3)$ of some Palais-Smale sequence $(u_n)_{n\in\mathbb N}\subset E(\mathbb R^3)$ for $I_\lambda$ at level $c_\lambda,$ then it holds that
\begin{itemize}
\item $I_\lambda(u)=  c_\lambda, \quad \textrm{for}\,\,q\in(3,5),$
\item []
\item $\inf_{v\in\mathcal N_\lambda}I_\lambda(v)\leq I_{\lambda}(u)\leq c_\lambda,\quad \textrm{for}\,\,q=3.$
\item []
\end{itemize}
In particular, for all $\lambda\geq\max\left(\lambda_0,\lambda_1\right),$ the mountain-pass level $c_\lambda$ is critical for $q\in(3,5),$ as well as the level $I_\lambda(u)$ for $q=3.$  
\end{proposition}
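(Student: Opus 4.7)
The plan is to run a Brezis--Lieb style decomposition of the Palais--Smale sequence $(u_n)_n\subset E(\R^3)$ around its nontrivial weak limit $u$, combine it with the Nehari identity for $I_\lambda$, and use the weighted $L^3$-estimate of Lemma \ref{weightedL3boundlemma} to quantify the interaction between $\lambda$ and $\overline{M}$.

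First I would check that $u$ is itself a critical point of $I_\lambda$. By Lemma \ref{boundedPSforvanishingrho}, $(u_n)_n$ is bounded in $E(\R^3)$ uniformly in $\lambda$, so up to a subsequence $u_n\rightharpoonup u$ in $E(\R^3)$, $u_n\to u$ in $L^s_{\textrm{loc}}(\R^3)$ for $s\in[1,2^\ast)$, and $\phi_{u_n}\rightharpoonup\phi_u$ in $D^{1,2}(\R^3)$. Passing to the limit in $\langle I'_\lambda(u_n),v\rangle = o(1)$ against $v\in C^\infty_c(\R^3)$ yields $I'_\lambda(u)=0$, in particular $u\in\mathcal{N}_\lambda$. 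Setting $w_n:=u_n-u$ with $w_n\rightharpoonup 0$, the classical Brezis--Lieb identities for $\|\cdot\|_{H^1}^2$ and $\|\cdot\|_{L^{q+1}}^{q+1}$ together with Lemma \ref{nonlocalBL} for the Coulomb term give
\[ c_\lambda = I_\lambda(u)+I_\lambda(w_n)+o(1), \qquad I'_\lambda(w_n)[w_n]=o(1). \]
Substituting the Nehari-type identity for $w_n$ into this decomposition yields
\[ c_\lambda - I_\lambda(u) = \lim_n\!\Bigl[\Bigl(\tfrac12-\tfrac{1}{q+1}\Bigr)\|w_n\|_{H^1(\R^3)}^2 + \lambda^2\Bigl(\tfrac14-\tfrac{1}{q+1}\Bigr)\!\int_{\R^3}\rho\phi_{w_n}w_n^2\Bigr] \geq 0, \]
since both coefficients are non-negative for $q\geq 3$. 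This already establishes $I_\lambda(u)\leq c_\lambda$ in both cases.

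For the lower bound I would argue according to the value of $q$. When $q\in(3,5)$, for every $v\neq 0$ the fiber $t\mapsto I_\lambda(tv)$ has a unique positive maximum, hence the mountain-pass level admits the Nehari characterisation $c_\lambda=\inf_{\mathcal{N}_\lambda}I_\lambda$; combined with $u\in\mathcal{N}_\lambda$ this forces $I_\lambda(u)\geq c_\lambda$ and gives equality. When $q=3$ the quartic character of $I_\lambda$ in $t$ makes the behaviour of $t\mapsto I_\lambda(tv)$ genuinely $\lambda$-sensitive and the Nehari-characterisation is not obviously valid, but $u\in\mathcal{N}_\lambda$ still gives $\inf_{\mathcal{N}_\lambda}I_\lambda\leq I_\lambda(u)$, completing the sandwich bound. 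The quantitative threshold $\lambda_1=\lambda_1(q,\overline{M})$ enters through the weighted $L^3$-bound of Lemma \ref{weightedL3boundlemma}: splitting the integration via \ref{vanishing_rho} into $A(R)$ (where $\rho\geq \overline{M}$) and $B(R)$ (whose measure is controlled by Lemma \ref{measureofBgoingtozerolemma}), and using Sobolev interpolation between $L^3$ and $L^6$ to estimate $\|w_n\|_{L^{q+1}}^{q+1}$, one obtains a factor of order $(\lambda\overline{M})^{-1}$ that has to be small enough for the residual Brezis--Lieb terms to contribute with the correct sign, which is the interaction highlighted in the introduction.

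The main obstacle I foresee is the case $q=3$. Here the coefficient $\tfrac14-\tfrac{1}{q+1}$ in the Brezis--Lieb decomposition degenerates to zero, so the Coulomb part of the residual cannot be exploited to force $w_n\to 0$ strongly, and a clean Nehari characterisation of $c_\lambda$ is not available either. Extracting the correct value of $\lambda_1$ requires carefully balancing the weighted $L^3$-bound, the tail decay coming from \ref{vanishing_rho}, and the interpolation inequality controlling $\|w_n\|_{L^{q+1}}^{q+1}$ in terms of $\|w_n\|_{L^3}$ and $\|w_n\|_{H^1}$; the criticality of the level $I_\lambda(u)$ for $q=3$ and of $c_\lambda$ for $q\in(3,5)$ follows at once once Step 1 is in place.
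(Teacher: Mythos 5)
Your argument is correct, and it takes a genuinely different route from the paper's. Both proofs start from the Brezis--Lieb splitting $c_\lambda = I_\lambda(u) + \lim_n I_\lambda(w_n)$ (with $w_n=u_n-u$), but they diverge in how $\lim_n I_\lambda(w_n)\geq 0$ is obtained. You apply the same Brezis--Lieb identities to $I'_\lambda(u_n)(u_n)\to 0$ together with $I'_\lambda(u)(u)=0$ to deduce $I'_\lambda(w_n)(w_n)\to 0$, and then the algebraic identity
\[
I_\lambda(w_n)-\tfrac{1}{q+1}I'_\lambda(w_n)(w_n)=\Bigl(\tfrac12-\tfrac{1}{q+1}\Bigr)\|w_n\|_{H^1}^2+\Bigl(\tfrac14-\tfrac{1}{q+1}\Bigr)\lambda^2\!\int_{\R^3}\rho\,\phi_{w_n}w_n^2\ \geq\ 0
\]
for $q\geq 3$ closes the upper bound $I_\lambda(u)\leq c_\lambda$ immediately; the lower bound is then the trivial $u\in\mathcal N_\lambda\Rightarrow\inf_{\mathcal N_\lambda}I_\lambda\leq I_\lambda(u)$, and for $q>3$ the classical Nehari characterisation $c_\lambda=\inf_{\mathcal N_\lambda}I_\lambda$ gives equality. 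The paper instead derives a purely energetic lower bound on $I_\lambda(w_n)$: it splits $\tfrac12=\tfrac14+\tfrac14$ on the gradient term, uses Lemma~\ref{weightedL3boundlemma} and Young's inequality to convert a quarter of the Dirichlet term and the Coulomb term into $\tfrac{\lambda\overline M}{2}\int_{A(R)}|w_n|^3$, splits $\int|w_n|^{q+1}$ over $B_R$, $A(R)$ and $B(R)$, controls the $B(R)$ piece via Lemma~\ref{measureofBgoingtozerolemma}, interpolates the $A(R)$ piece between $L^3$ and $L^6$, and absorbs terms via Sobolev to reach \eqref{boundforerrorterm}, whose leading coefficient $\tfrac14 - S_\lambda S^{-1}(\cdots)^{2/3}$ is nonnegative only if $\lambda\geq\lambda_1(q,\overline M)$. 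What your route buys is that, at the level of Proposition~\ref{lambdaone} as stated, no $\lambda_1$-threshold appears at all: the sign of $\lim_n I_\lambda(w_n)$ is obtained algebraically, and the only role of large $\lambda$ is to guarantee $u\neq 0$ (i.e.\ $\lambda\geq\lambda_0$ via Proposition~\ref{positivityofweakuthm}). What the paper's heavier estimate buys, by contrast, is that it does not invoke $I'_\lambda(w_n)(w_n)\to 0$ and hence applies verbatim to arbitrary uniformly bounded sequences; this is the version the authors reuse in the Nehari-constrained argument of Theorem~\ref{groundstate sol} and in the compactness discussion of Remark~\ref{PSremark}.

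The one place your write-up drifts is the final paragraph. You single out $q=3$ as the ``main obstacle'' because the coefficient $\tfrac14-\tfrac1{q+1}$ vanishes, but your argument only needs the sign of $\lim_n I_\lambda(w_n)$ and not strong convergence of $w_n$; the $H^1$ coefficient $\tfrac12-\tfrac1{q+1}=\tfrac14>0$ already gives nonnegativity. Likewise, the paragraph explaining how $\lambda_1$ ``enters through the weighted $L^3$-bound'' describes the paper's mechanism, not yours: nothing in the decomposition argument you actually presented uses $\lambda_1$, Lemma~\ref{weightedL3boundlemma}, or the $A(R)/B(R)$ split. You should either drop these speculative remarks or state plainly that your proof of Proposition~\ref{lambdaone} requires no threshold beyond $\lambda_0$ and explain why the paper's $\lambda_1$-estimate remains relevant elsewhere.
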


\begin{proof}
By Proposition \ref{positivityofweakuthm}, for every $q\in[3,\tstar-1)$ and $\lambda\geq\lambda_0$, passing if necessary to a subsequence, we can assume that $u_n\rightharpoonup u\in E(\R^3)\setminus\{0\}$ weakly in $E(\R^3)$ and almost everywhere, for some Palais-Smale sequence $(u_n)_{n\in\mathbb N}\subset E(\mathbb R^3)$ for $I_\lambda$ at level $c_\lambda.$ By a standard argument $u$ is a critical point of $I_\lambda.$ For sake of clarity we break the proof into two steps.\\
\textbf{Step 1:} We first show that there exists a universal constant $C=C(q)>0$ such that for every $\lambda\geq \lambda_0,$ $R>0$ and $n\in\mathbb N,$ it holds that
\begin{equation}
\begin{aligned}\label{boundforerrorterm}
    I_{\lambda}(\un - u) &\geq \left( \frac{1}{4} - S_{\lambda} S^{-1} \left( \int_{A(R)} \abs{\un - u}^6 \right)^{\frac{2}{3}} \right) \int_{\R^3} \abs{\nabla(\un - u)}^2\\
    & \qquad \qquad - C \abs{B(R)}^{\frac{5-q}{6}} - \overqplus \int_{\abs{x} < R} \abs{\un - u}^{q+1},
\end{aligned}
\end{equation}
where
\begin{equation*}
    S_{\lambda} := (q-2)[3(q+1)]^{\frac{-3}{q-2}}\left( \frac{2(5-q)}{\lambda \overline{M}} \right)^{\frac{5-q}{q-2}},
\end{equation*}
$S=3(\pi/2)^{4/3}$ is the Sobolev constant, and $\overline{M}$ is defined as in \ref{vanishing_rho}. Reasoning as in Lemma \ref{weightedL3boundlemma} and by Lemma \ref{intoutsideballlemma} we obtain, 
\begin{align}\label{initialboundforerrorterm}
    \nonumber I_{\lambda}(\un - u) &\geq \quarter \int_{\R^3} \abs{\nabla (\un - u)}^2 + \quarter \int_{\R^3} \abs{\nabla (\un - u)}^2\\ 
    \nonumber&\qquad \qquad + \frac{\lambda^2}{4} \int_{\R^3} \phi_{(\un - u)} (\un - u)^2 \rho (x) - \overqplus \int_{\R^3} \abs{\un - u}^{q+1}\\
    \nonumber&\geq \quarter \int_{\R^3} \abs{\nabla (\un - u)}^2 + \frac{\lambda}{2} \int_{\R^3} \rho (x) \abs{\un - u}^3 - \overqplus \int_{\R^3} \abs{\un - u}^{q+1}\\
    &\geq \quarter \int_{\R^3} \abs{\nabla (\un - u)}^2 + \frac{\lambda \overline{M}}{2} \int_{A(R)} \abs{\un - u}^3 - \overqplus \int_{\R^3} \abs{\un - u}^{q+1}.
\end{align}
 Note that
\begin{equation*}
     \int_{\R^3} \abs{\un - u}^{q+1} =  \int_{\abs{x} < R} ... +  \int_{A(R)}... + \int_{B(R)} ...
\end{equation*}
Using that $(u_n)_{n\in\mathbb N}$ is uniformly bounded in $E(\R^3)$ and arguing as in  Lemma \ref{intoutsideballlemma} and by Sobolev inequality, we have
\begin{equation}\label{errortermBbound}
    \int_{B(R)} \abs{\un - u}^{q+1} \leq C_1 \norm{\un - u}_{L^6 (\R^3)}^{q+1} \abs{B(R)}^{\frac{5 - q}{6}} \leq C_2 \abs{B(R)}^{\frac{5-q}{6}}. 
\end{equation}
By the interpolation and Young's inequalities we obtain for all $\delta>0$ that
\begin{align*}
    \overqplus \int_{A(R)} \abs{\un - u}^{q+1} &\leq \overqplus \left( \int_{A(R)} \abs{\un - u}^3 \right)^{\frac{5-q}{3}} \left( \int_{A(R)} \abs{\un - u}^6 \right)^{\frac{q-2}{3}}\\
    &\leq \left( \frac{5-q}{3} \right) \left( \frac{\delta}{q+1} \right)^{\frac{3}{5-q}} \int_{A(R)} \abs{\un - u}^3 + \left( \frac{q-2}{3} \right) \delta^{\frac{-3}{q-2}} \int_{A(R)} \abs{\un - u}^6.
\end{align*}
In particular, we can set
\begin{equation*}
    \delta = \left( \frac{\lambda \overline{M}}{2} \cdot \frac{3}{5-q} \right)^{\frac{5-q}{3}} (q+1).
\end{equation*}
Hence
\begin{align}\label{errortermAbound}
    \nonumber \overqplus \int_{A(R)} \abs{\un - u}^{q+1} &\leq \frac{\lambda \overline{M}}{2} \int_{A(R)} \abs{\un - u}^3 + S_{\lambda} \int_{A(R)} \abs{\un - u}^6\\
    &\leq \frac{\lambda \overline{M}}{2} \int_{A(R)} \abs{\un - u}^3 + S_{\lambda} S^{-1} \left( \int_{A(R)} \abs{\un - u}^6 \right)^{\frac{2}{3}} \int_{\R^3} \abs{\nabla (\un - u)}^2,
\end{align}
where we have used Sobolev's inequality written as
\begin{equation*}
    S \left( \int_{A(R)} \abs{\un - u}^6 \right)^{\frac{1}{3}} \leq \int_{\R^3} \abs{\nabla(\un - u)}^2.
\end{equation*}
Putting together \eqref{initialboundforerrorterm},  \eqref{errortermBbound} and \eqref{errortermAbound}, the claim \eqref{boundforerrorterm} follows.\\
\textbf{Step 2: Conclusion.}  By the classical Brezis-Lieb lemma and Lemma \ref{nonlocalBL} we have 
\begin{equation}\label{BL decompositions}
c_\lambda=\lim_{n\rightarrow\infty}I_\lambda(u_n)=I_\lambda(u)+\lim_{n\rightarrow\infty}I_\lambda(u_n-u).
\end{equation}
Note that there exists a positive constant $\lambda_1=\lambda_1(q,\overline{M})$ such that for every $\lambda\geq \lambda_1$ it holds that
\begin{equation}\label{positivity of error term}
    \quarter - S_{\lambda} S^{-3} \overline{C}^4 \geq 0,
\end{equation}
where $\overline{C}$ is defined via Lemma \ref{boundedPSforvanishingrho} by the property $\|u_n\|_{E(\R^3)}<\overline{C} .$ Note that, again by the Brezis-Lieb lemma, we have 
\begin{equation*}
   \int_{A(R)} \abs{\un - u}^6  = \int_{A(R)} \abs{\un}^6 - \int_{A(R)} \abs{u}^6 + o_n (R),
\end{equation*}
with $\lim_{n\rightarrow \infty} o_n (R)=0$ for any fixed $R>0$; since by Sobolev's inequality it holds that
\begin{equation*}
   \int_{A(R)} \abs{\un}^6 \leq S^{-3} \left( \int_{\R^3} \abs{\nabla \un}^2 \right)^3 \leq S^{-3} \overline{C}^6,
\end{equation*}
we obtain the estimate
\begin{equation}\label{BL6}
    \limsup_{R \rightarrow \infty} \limsup_{n \rightarrow \infty} \int_{A(R)} \abs{\un - u}^6 \leq  S^{-3} \overline{C}^6.
\end{equation}
We conclude, by \eqref{boundforerrorterm}, \eqref{positivity of error term}, \eqref{BL6} and the classical Rellich theorem that
\begin{align*}
    \lim\limits_{n \rightarrow \infty} I_{\lambda} (\un - u) &\geq \liminf\limits_{R \rightarrow \infty} \liminf\limits_{n \rightarrow \infty} \left( \frac{1}{4} - S_{\lambda} S^{-1} \left( \int_{A(R)} \abs{\un - u}^6 \right)^{\frac{2}{3}} \right) \int_{\R^3} \abs{\nabla(\un - u)}^2\\
    &\geq \left[ \quarter - S_{\lambda} S^{-3} \overline{C}^4 \right] \liminf\limits_{n \rightarrow \infty} \int_{\R^3} \abs{\nabla (\un - u)}^2 \geq 0,
\end{align*}
and hence by \eqref{BL decompositions} that $I_{\lambda} (u) \leq \clambda$. On the other hand, since $u \in \mathcal{N}_{\lambda}$, it holds that
\begin{equation*}
    \inf\limits_{v \in \mathcal{N}_{\lambda}} I_{\lambda} (v) \leq I_{\lambda} (u) \leq \clambda,
\end{equation*}
and this completes the proof for $q=3$. For $q\in(3,\tstar-1)$, since $$\clambda = \inf\limits_{v \in \mathcal{N}_{\lambda}} I_{\lambda} (v),$$ it follows that $I_{\lambda} (u) = \clambda,$ and this concludes the proof.
\end{proof}

\begin{remark}[{\bf On the Palais-Smale condition}]\label{PSremark}
When $q>3,$ the fact that $\lim I_\lambda(u_n-u)=0$ for $\lambda$ large suggests that the Palais-Smale condition at the mountain-pass level $c_\lambda$ can be recovered in some cases. To illustrate this, note that the assumption  \ref{vanishing_rho} is compatible with having, say $\rho(x)\rightarrow 2\overline{M},$ as $|x|\rightarrow\infty,$ namely a situation where lack of compactness phenomena may occur for the system \eqref{main SP system multiplicity} as a consequence of the invariance by translations of \eqref{A-Ruiz PDE}, which plays the role of a `problem at infinity', see e.g. \cite{Mercuri and Tyler}. We stress here that $\rho$ may approach its limit from below as well as from above. To see that in this case the Palais-Smale condition is satisfied for $\lambda$ large, denote by $I_{\lambda}^{\rho\equiv2\overline{M}}$ the functional associated to \eqref{SP one equation} with $\rho\equiv2\overline{M},$ and observe that in this situation $E(\R^3)\simeq H^1(\R^3),$ with equivalent norms by \eqref{HLS intro}. Reasoning as in  \cite[Proposition 1.6]{Mercuri and Tyler}, there exist $l\in \N\cup\{0\}$, functions $(v_1,\ldots, v_l)\subset H^1(\R^3)$, and sequences of points $(y_n^j)_{n\in\N}\subset \R^3$, $1\leq j \leq l$, such that, passing if necessary to  a subsequence,
\begin{itemize}
\item $v_j$ are possibly nontrivial critical points of $I_{\lambda}^{\rho\equiv2\overline{M}}$ for  $1\leq j\leq l$,
\item []
\item $| y_n^j | \to +\infty$, $|y_n^j - y_n^{j'}| \to +\infty$ as $n\to+\infty$ if $j\neq j'$,
\item []
\item $||u_n-u- \sum_{j=1}^{l} v_j(\cdot -y_n^j)||_{H^1(\R^3)}\to 0$ as $n\to+\infty$,
\item []
\item $c_\lambda = I_{\lambda}(u)+\sum_{j=1}^{l}I_{\lambda}^{\rho\equiv2\overline{M}}(v_j)$.\item []
\end{itemize}
It is standard to see that $I_{\lambda}^{\rho\equiv2\overline{M}}$ is uniformly bounded below on the set of its nontrivial critical points by a positive constant, independent on $\lambda$. It then follows that for all $\lambda \geq\max\left(\lambda_0,\lambda_1\right),$ Proposition \ref{lambdaone} and the above yield $c_\lambda=I_\lambda(u)$ and at the same time $l=0;$ as a consequence the Palais-Smale condition is satisfied at the level $c_\lambda.$ These considerations yield the following
\begin{proposition}[{\bf Palais-Smale condition under \ref{vanishing_rho}}]\label{PSconditionVan}
Let $N=3<q$ and $\rho\geq 0$ be locally bounded such that \ref{vanishing_rho} is satisfied and such that $\rho(x)\rightarrow \rho_\infty>\overline{M}$ as $|x|\rightarrow \infty$. Let $\lambda_0$ and $\lambda_1$
be as in Proposition \ref{lambdaone}. Then, for all $\lambda\geq\max\left(\lambda_0,\lambda_1\right),$ $I_\lambda$ satisfies the Palais-Smale condition at the mountain-pass level $c_\lambda.$
\end{proposition}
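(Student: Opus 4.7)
The plan is to combine the profile decomposition sketched in Remark \ref{PSremark} with the energy identity provided by Proposition \ref{lambdaone}. Let $(u_n)_{n\in\mathbb N}\subset E(\R^3)$ be an arbitrary Palais-Smale sequence at level $c_\lambda$. By Lemma \ref{boundedPSforvanishingrho} this sequence is bounded uniformly in $\lambda$, so up to a subsequence $u_n\rightharpoonup u$ weakly in $E(\R^3)$ and almost everywhere, and a standard computation shows that $u$ is a critical point of $I_\lambda$. Since by hypothesis $\rho(x)\to \rho_\infty>\overline M$ as $|x|\to\infty$ and $\rho\in L^\infty_{\textrm{loc}}(\R^3)$, the Hardy-Littlewood-Sobolev inequality \eqref{HLS intro} yields that $E(\R^3)$ and $H^1(\R^3)$ coincide with equivalent norms, a fact that will allow us to apply the $H^1$-valued profile decomposition to the residual sequence $u_n-u$.

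The next step is to invoke the profile decomposition of \cite[Proposition 1.6]{Mercuri and Tyler}, applied to the Palais-Smale sequence $(u_n)$: there exist $l\in\N\cup\{0\}$, nontrivial critical points $v_1,\dots,v_l\in H^1(\R^3)$ of the ``problem at infinity'' functional $I_\lambda^{\rho\equiv\rho_\infty}$, and translation sequences $(y_n^j)_{n\in\N}\subset\R^3$ with $|y_n^j|\to\infty$ and $|y_n^j-y_n^{j'}|\to\infty$ for $j\neq j'$, such that
\[
\Big\|u_n-u-\sum_{j=1}^{l} v_j(\cdot-y_n^j)\Big\|_{H^1(\R^3)}\longrightarrow 0,
\qquad c_\lambda=I_\lambda(u)+\sum_{j=1}^{l}I_\lambda^{\rho\equiv\rho_\infty}(v_j).
\]
Because the bubble centers $y_n^j$ diverge and $\rho-\rho_\infty$ decays at infinity, the nonlocal Coulomb term evaluated on translates of $v_j$ converges to the one computed with the constant weight $\rho_\infty$; together with the classical Brezis--Lieb lemma and Lemma \ref{nonlocalBL} this produces the above energy splitting.

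To conclude, we appeal to Proposition \ref{lambdaone}: since $\lambda\geq\max(\lambda_0,\lambda_1)$ and $q>3$, the weak limit $u$ is nontrivial and satisfies $I_\lambda(u)=c_\lambda$. Substituting in the energy identity we obtain
\[
\sum_{j=1}^{l}I_\lambda^{\rho\equiv\rho_\infty}(v_j)=0.
\]
A routine Nehari-type estimate shows that any nontrivial critical point $v$ of $I_\lambda^{\rho\equiv\rho_\infty}$ satisfies
\[
I_\lambda^{\rho\equiv\rho_\infty}(v)=\Big(\tfrac12-\tfrac{1}{q+1}\Big)\|v\|_{H^1(\R^3)}^2+\lambda^2\Big(\tfrac14-\tfrac{1}{q+1}\Big)\!\int_{\R^3}\!\rho_\infty\phi_v v^2\geq \kappa>0
\]
for some constant $\kappa$ independent of $\lambda$ (bounding the $L^{q+1}$-norm from below via Sobolev and the Nehari identity). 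Consequently the sum above forces $l=0$, and the residual sequence $u_n-u$ converges to zero in $H^1(\R^3)$, hence in $E(\R^3)$.

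The main obstacle is making rigorous the transition from the profile decomposition in $H^1(\R^3)$ to the energy splitting for the full functional $I_\lambda$, since the nonlocal term involves the non-constant weight $\rho$. This step is handled by replacing $\rho$ with $\rho_\infty$ along each translated bubble via dominated convergence (using $\rho\to\rho_\infty$ at infinity and the uniform boundedness of $(u_n)$ in $E(\R^3)$), together with the orthogonality of the bubbles encoded in $|y_n^j-y_n^{j'}|\to\infty$. Once this is in place, Proposition \ref{lambdaone} closes the argument immediately.
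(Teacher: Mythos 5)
Your proposal is correct and follows essentially the same route as the paper: the paper's "proof" is simply a pointer to Remark \ref{PSremark}, which develops precisely the argument you give — identify $E(\R^3)$ with $H^1(\R^3)$ via \eqref{HLS intro}, invoke the profile decomposition of \cite[Proposition 1.6]{Mercuri and Tyler} with $I_\lambda^{\rho\equiv\rho_\infty}$ as the problem at infinity, feed in $I_\lambda(u)=c_\lambda$ from Proposition \ref{lambdaone}, and use the uniform positive lower bound on nontrivial critical values of the limiting functional to force $l=0$. You are slightly more explicit than the paper about the localization step (replacing $\rho$ by $\rho_\infty$ along diverging bubbles), which is a genuine technical point the paper elides behind the citation to \cite{Mercuri and Tyler}.
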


It is not obvious how to prove the above proposition in the case $q=3;$ nevertheless the same considerations on strong convergence apply instead to approximated critical points of $I_\lambda$ constrained on the Nehari manifold, see the proof Theorem \ref{groundstate sol} and Proposition \ref{CPSconditionVan} below. 
\end{remark}
\subsection{Proof of Theorem \ref{groundstate sol}}
Now that we have the necessary preliminaries we present the proof of Theorem \ref{groundstate sol}.

\begin{proof}[Proof of Theorem \ref{groundstate sol}]
We recall that 
\[\mathcal{N}_{\lambda}\coloneqq\left\{u\in E(\R^3)\setminus\{0\}:G_{\lambda}(u)=0\right\},\]
where
\[G_{\lambda}(u)=I_{\lambda}'(u)(u)=||u||_{H^1(\R^3)}^2+\lambda^2\int_{\R^3}\rho\phi_u u^2-||u||_{L^{q+1}(\R^3)}^{q+1}.\]
We note that for all $q\in[3,\tstar-1)$, it is standard to see that $\mathcal{N}_{\lambda}$ is nonempty.  Moreover, we claim that the conditions
\begin{enumerate}[label=(\roman*)]
\item $\exists r>0:B_r\cap \mathcal{N}_{\lambda}=\emptyset$,
\item $G_{\lambda}'(u)(u)\neq 0,\quad \forall u\in\mathcal{N}_{\lambda}$,
\end{enumerate}
are satisfied, and so, by standard arguments, it follows that the Nehari manifold $\mathcal{N}_{\lambda}$ is a natural constraint (see e.g.\ \cite{Ambrosetti and Malchiodi}). Indeed, for $(i)$, we notice that if $u\in\mathcal{N}_{\lambda}$, then
\[0=||u||_{H^1(\R^3)}^2+\lambda^2\int_{\R^3}\rho\phi_u u^2-||u||_{L^{q+1}(\R^3)}^{q+1}\geq ||u||_{H^1(\R^3)}^2-S_{q+1}^{-(q+1)}||u||_{H^1(\R^3)}^{q+1}, \]
from which it follows that
\begin{equation}\label{lower bound nehari}
||u||_{E(\R^3)}\geq ||u||_{H^1(\R^3)} \geq S_{q+1}^{(q+1)/(q-1)},\quad\forall u\in\mathcal{N}_{\lambda}.
\end{equation}
Setting $r=S_{q+1}^{(q+1)/(q-1)}-\delta$ for some small $\delta>0$ yields $(i)$. For $(ii)$, we notice that if $u\in\mathcal{N}_{\lambda}$, then by the definition of the Nehari manifold, the assumption $q\geq3$ and \eqref{lower bound nehari}, it holds that
\begin{equation}\label{G' less that zero}
\begin{split}
    G_{\lambda}'(u)(u)&=2||u||^2_{H^1(\R^3)}+4\lambda^2\int_{\R^3}\rho\phi_u u^2-(q+1)||u||_{L^{q+1}(\R^3)}^{q+1}\\
    &=(1-q) ||u||^2_{H^1(\R^3)}+(3-q)\lambda^2\int_{\R^3}\rho\phi_u u^2\\
    &\leq (1-q) S_{q+1}^{2(q+1)/(q-1)}\\
    &<0.
    \end{split}
\end{equation}
Thus, the claim holds and so the Nehari manifold is a natural constraint. Now, if $q\in(3,\tstar-1)$, setting $\lambda_*= \max\{\lambda_0,\lambda_1\}$, the conclusion follows immediately from Proposition \ref{lambdaone} and the following characterisation of the mountain-pass level, $$\clambda = \inf\limits_{v \in \mathcal{N}_{\lambda}} I_{\lambda} (v).$$ On the other hand, assume $q=3$ and $\lambda\geq\max\{\lambda_0,\lambda_1\}$. We note that $$c_{\lambda}^*\coloneqq\inf\limits_{v \in \mathcal{N}_{\lambda}} I_{\lambda} (v)$$ is well-defined since $\mathcal{N}_{\lambda}$ is nonempty, and so, we take $(\tilde{w}_n)_{n\in\N}\subset \mathcal{N}_{\lambda}$ to be a minimising sequence for $I_{\lambda}$ on $\mathcal{N}_{\lambda}$, namely, $I_{\lambda}(\tilde{w}_n)\to c_{\lambda}^*$. By the Ekeland variational principle (see e.g.\ \cite{Costa}), there exists another minimising sequence $({w}_n)_{n\in\N}\subset \mathcal{N}_{\lambda}$ and $\xi_n\in\R$ such that 
\begin{equation}\label{Ekeland 1}
    I_{\lambda}({w}_n)\to c_{\lambda}^*,
\end{equation}
\begin{equation}\label{Ekeland 2}
    I_{\lambda}'({w}_n)({w}_n)=0,
\end{equation}

and
\begin{equation}\label{Ekeland 3}
    I_{\lambda}'({w}_n)-\xi_nG_{\lambda}'({w}_n)\to 0,\qquad \textrm{in}\,\,(E(\R^3))'.
\end{equation}
Now, by Proposition \ref{lambdaone}, \eqref{uniform upper bound mp level}, \eqref{Ekeland 1} and \eqref{Ekeland 2}, it holds that
\[\lim_{n\to+\infty}\left(I_{\lambda}({w}_n)-\frac{1}{q+1}I_{\lambda}'({w}_n)({w}_n)\right)=c_{\lambda}^*\leq c_{\lambda}\leq \bar{c},\]
for some $\bar{c}$ independent of $\lambda$.
We can therefore argue as in Lemma \ref{boundedPSforvanishingrho} to show that \begin{equation}\label{unif bound tilde w}
||{w}_n||_{E(\R^3)}< \bar{C},
\end{equation}
where $\bar{C}>0$ is the same constant independent of $\lambda$ given by Lemma \ref{boundedPSforvanishingrho}. Moreover, since $({w}_n)_{n\in\N}\subset \mathcal{N}_{\lambda}$, it follows using \eqref{lower bound nehari} that
\begin{align*}
     ||{w}_n||_{L^{4}(\R^3)}^{4}=||{w}_n||_{H^1(\R^3)}^2+\lambda^2\int_{\R^3}\rho \phi_{{w}_n}{w}_n^2\geq ||{w}_n||_{H^1(\R^3)}^2\geq S_{4}^{4}>0.
\end{align*}
	Thus, by interpolation it holds
	$$S_4^4\leq \int_{\R^3}|{w}_n|^{4}\leq\left(\int_{\R^3}|{w}_n|^3\right)^{\frac{2}{3}}\left(\int_{\R^3}|{w}_n|^6\right)^{\frac{1}{3}},$$
	and so, by the Sobolev inequality and \eqref{unif bound tilde w}, it follows that we can pick $\alpha > 0$ independent of $\lambda$ such that
	\begin{equation*}
\liminf_{n \rightarrow \infty}\|{w}_n\|^3_{L^3(\R^3)} \geq \alpha.
	\end{equation*}
Moreover, by Lemma \ref{intoutsideballlemma}, we can set $\lambda_*= \max\{\lambda_0,\lambda_1\}$ and  $R_\alpha>0$ such that such that for every $\lambda\geq \lambda_*$ and every $R>R_\alpha$ we have $$\limsup_{n\rightarrow\infty} \norm{{w}_n}^{3}_{L^{3} (\R^3 \setminus B_R)} < \frac{\alpha}{2}.$$ 
Now, since $({w}_n)_{n\in\N}$ is bounded, passing if necessary to a subsequence, we can assume that ${w}_n\rightharpoonup w$ in $E(\R^3)$ and ${w}_n\rightarrow w$ in $L^{3}_{\textrm{loc}}(\mathbb R^3)$. It follows that for every $\lambda\geq \lambda_*$ and $R>R_\alpha$,
    \begin{equation*}
     \norm{w}^{3}_{L^{3} (B_R)} \geq \liminf\limits_{n\rightarrow \infty}\norm{{w}_n}^{3}_{L^{3} (\mathbb R^3)}  -\limsup\limits_{n\rightarrow \infty}\norm{{w}_n}^{3}_{L^{3} (\R^3 \setminus B_R)}   > \frac{\alpha}{2},
    \end{equation*}
    and so $w\not\equiv0$. We now notice that by \eqref{Ekeland 2}, \eqref{Ekeland 3}, and \eqref{unif bound tilde w}, it holds, up to a constant independent of $\lambda$, that
\begin{equation*}
        \begin{split} 
        o(1)&= ||I_{\lambda}'({w}_n)-\xi_nG_{\lambda}'({w}_n)||_{(E(\R^3))'}\\
        &\gtrsim |I_{\lambda}'({w}_n)({w}_n)-\xi_nG_{\lambda}'({w}_n)({w}_n)|\\
        &=|\xi_nG_{\lambda}'({w}_n)({w}_n)|,
        \end{split}
    \end{equation*}
    for some $\xi_n\in\R$. Since $({w}_n)\subset\mathcal{N}_{\lambda}$, by \eqref{G' less that zero}, we have that $G_{\lambda}'({w}_n)({w}_n)<-2S^4_4<0$, and so the above yields $\xi_n\to0$. Moreover, using \eqref{unif bound tilde w} and the inequality
    \[|D(f,g)|^2\leq D(f,f)D(g,g),\]
    where
    \[D(f,g)\coloneqq \int_{\R^3}\int_{\R^3}\frac{f(x)g(y)}{|x-y|}\dif x \dif y,\]
    for $f,g$ measurable and nonnegative functions (see \cite[p.250]{Lieb and Loss}), it follows that $G_{\lambda}'({w}_n)$ is bounded. Taken together, we have that $\xi_nG_{\lambda}'({w}_n)\to0$, and using this and \eqref{Ekeland 3}, we obtain $I_{\lambda}'({w}_n)\to0$. Hence, $({w}_n)_{n\in\N}$ is a Palais-Smale sequence for $I_{\lambda}$ at level $c_{\lambda}^*$, and so, since we have also shown that ${w}_n\rightharpoonup w\not\equiv 0$ in $E(\R^3)$, a standard argument yields that $w$ is a nontrivial critical point of $I_{\lambda}$. Namely, $w\in\mathcal{N}_{\lambda}$, and thus
    \begin{equation}\label{groundstate ineq 1}
        c_{\lambda}^*\leq I_{\lambda}(w).
    \end{equation}
    On the other hand, arguing as in Proposition \ref{lambdaone}, replacing $u_n$, $u$, and $c_{\lambda}$ with ${w}_n$, $w$, and $c_{\lambda}^*$, respectively, for every $\lambda \geq \lambda_*$, we obtain
    \begin{equation}\label{groundstate ineq 2}
         I_{\lambda}(w)\leq c_{\lambda}^*.
    \end{equation}
    For the reader convenience we recall that $\lambda_1$ is chosen in Proposition \ref{lambdaone} so that for every $\lambda\geq\lambda_1$, it holds that $\frac{1}{4}-S_{\lambda}S^{-3}\bar{C}^4\geq0$, where $\bar{C}$ is defined via Lemma \ref{boundedPSforvanishingrho} by the property $||u_n||_{E(\R^3)}<\bar{C}.$ Going through the same argument with $({w}_n)_{n\in\N}$, since $({w}_n)_{n\in\N}$ is bounded by precisely the same uniform constant, namely $||{w}_n||_{E(\R^3)}<\bar{C}$, we conclude that \eqref{groundstate ineq 2} holds for every $\lambda\geq\lambda_*$, as $\lambda_*\geq\lambda_1$ by construction.
    Putting \eqref{groundstate ineq 1} and \eqref{groundstate ineq 2} together yields $$I_{\lambda}(w)=\inf\limits_{v \in \mathcal{N}_{\lambda}} I_{\lambda} (v).$$ Since $I_{\lambda}(w)=I_{\lambda}(|w|)$ and $w\in \mathcal N_{\lambda}$ if and only if $|w|\in \mathcal N_{\lambda}$, we can assume $w\geq0$, and it follows that $w>0$ by Proposition \ref{reg}. This completes the proof. 
\end{proof}

\noindent As a byproduct of the above proof, we have the following
\begin{proposition}[{\bf Constrained Palais-Smale condition under \ref{vanishing_rho}}]\label{CPSconditionVan}
Let $N=3=q$ and $\rho\geq 0$ be locally bounded such that \ref{vanishing_rho} is satisfied and such that $\rho(x)\rightarrow \rho_\infty>\overline{M}$ as $|x|\rightarrow \infty$. Let $\lambda_0$ and $\lambda_1$
be as in Proposition \ref{lambdaone}. Then, for all $\lambda\geq\max\left(\lambda_0,\lambda_1\right),$ the restriction $I_\lambda|_{\mathcal{N}_{\lambda}}$ satisfies the Palais-Smale condition at the level 
 $$c_{\lambda}^*=\inf\limits_{v \in \mathcal{N}_{\lambda}} I_{\lambda} (v).$$ That is, 
 every sequence $(u_n)_{n\in\mathbb N}\subset E(\R^3)\simeq H^{1}(\R^3)$ such that $$I(u_n)\rightarrow c_{\lambda}^*,\qquad \nabla I_\lambda(u_n)|_{\mathcal{N}_{\lambda}}\rightarrow 0\,\,\textrm{in}\,\,H^{-1}(\R^3)$$
is relatively compact.
\end{proposition}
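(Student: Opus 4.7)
The plan is to reduce the constrained Palais--Smale condition to an unconstrained one via Lagrange multipliers, and then to exploit the identity $I_{\lambda}(v)=\tfrac{1}{4}\|v\|_{H^{1}(\R^{3})}^{2}$, valid on $\mathcal{N}_{\lambda}$ for $q=3$, together with the Hilbertian structure of $H^{1}(\R^{3})\simeq E(\R^{3})$. Let $(u_n)\subset \mathcal{N}_{\lambda}$ satisfy $I_{\lambda}(u_n)\to c_{\lambda}^{*}$ and $\nabla I_{\lambda}|_{\mathcal{N}_{\lambda}}(u_n)\to 0$ in $H^{-1}(\R^{3})$. A direct computation on $\mathcal{N}_{\lambda}$ with $q=3$ gives $I_{\lambda}(u_n)=\tfrac{1}{4}\|u_n\|_{H^{1}(\R^{3})}^{2}$, so $\|u_n\|_{H^{1}(\R^{3})}^{2}\to 4c_{\lambda}^{*}$; in particular, the sequence is bounded in $H^{1}(\R^{3})$. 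The assumption $\rho(x)\to\rho_{\infty}>\overline{M}$ combined with \eqref{HLS intro} renders the norms of $H^{1}(\R^{3})$ and $E(\R^{3})$ equivalent, hence $(u_n)$ is bounded in $E(\R^{3})$ as well.

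The Lagrange multiplier formalism provides $\xi_n\in\R$ such that $I_{\lambda}'(u_n)-\xi_n G_{\lambda}'(u_n)\to 0$ in $H^{-1}(\R^{3})$. Pairing with $u_n$, using $I_{\lambda}'(u_n)(u_n)=0$ together with the $q=3$ specialisation of \eqref{G' less that zero}, namely $G_{\lambda}'(u_n)(u_n)=-2\|u_n\|_{H^{1}(\R^{3})}^{2}\le -2S_{4}^{4}$ (the lower bound coming from \eqref{lower bound nehari}), I obtain $\xi_n\to 0$. Since $G_{\lambda}'(u_n)$ is bounded in $H^{-1}(\R^{3})$, this yields $I_{\lambda}'(u_n)\to 0$, so $(u_n)$ becomes a genuine Palais--Smale sequence for $I_{\lambda}$ at level $c_{\lambda}^{*}$.

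Passing to a subsequence, $u_n\rightharpoonup u$ weakly in $H^{1}(\R^{3})$, and $u$ is a critical point of $I_{\lambda}$ by standard weak continuity of $I_{\lambda}'$. To show $u\not\equiv 0$, I would adapt the proof of Proposition \ref{positivityofweakuthm}. The two $\lambda$-uniform ingredients are available in the present setting: on the one hand \eqref{lower bound nehari} gives $c_{\lambda}^{*}\ge \tfrac{1}{4}S_{4}^{4}>0$ uniformly in $\lambda$; on the other, the Nehari identity combined with Sobolev yields $\lambda^{2}\int \rho\phi_{u_n}u_n^{2}=\|u_n\|_{L^{4}(\R^{3})}^{4}-\|u_n\|_{H^{1}(\R^{3})}^{2}\lesssim 1$, so that $\|u_n\|_{E(\R^{3})}$ is also uniformly bounded in $\lambda$. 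With these facts Lemma \ref{nonzeroweaklimitlemma} and interpolation between $L^{3}$ and $L^{6}$ produce $\liminf_n \|u_n\|_{L^{3}(\R^{3})}^{3}\ge\alpha_0>0$ with $\alpha_0$ independent of $\lambda$; for $\lambda\ge\lambda_0$ Lemma \ref{intoutsideballlemma} makes the $L^{3}$-tail arbitrarily small, and Rellich's theorem then forces $u\not\equiv 0$. Consequently $u\in\mathcal{N}_{\lambda}$ and $I_{\lambda}(u)\ge c_{\lambda}^{*}$.

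To close, using once more the identity $I_{\lambda}(u)=\tfrac{1}{4}\|u\|_{H^{1}(\R^{3})}^{2}$ on $\mathcal{N}_{\lambda}$, we find $\|u\|_{H^{1}(\R^{3})}^{2}\ge 4c_{\lambda}^{*}$, while weak lower semicontinuity gives the reverse bound $\|u\|_{H^{1}(\R^{3})}^{2}\le \liminf_n\|u_n\|_{H^{1}(\R^{3})}^{2}=4c_{\lambda}^{*}$. Therefore $\|u_n\|_{H^{1}(\R^{3})}\to\|u\|_{H^{1}(\R^{3})}$, and the classical fact that in a Hilbert space weak convergence together with convergence of norms implies strong convergence yields $u_n\to u$ in $H^{1}(\R^{3})\simeq E(\R^{3})$, completing the proof. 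The main obstacle is the nonvanishing step of the third paragraph: one must check that the $\lambda$-uniform lower bound for the critical level and the $\lambda$-uniform boundedness of the sequence, both established for the mountain-pass level $c_\lambda$ in Proposition \ref{positivityofweakuthm}, still survive at the constrained level $c_\lambda^*$.
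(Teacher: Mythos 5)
Your proof is correct and takes a genuinely different --- and simpler --- route than the paper's. The paper proves Proposition~\ref{CPSconditionVan} by referring to the concentration-compactness splitting of Remark~\ref{PSremark}: the Palais-Smale sequence decomposes in $H^1(\R^3)$ as $u + \sum_{j=1}^{l} v_j(\cdot - y_n^j) + o(1)$, with each $v_j$ a critical point of the translation-invariant limit functional $I_{\lambda}^{\rho\equiv\rho_\infty}$, the levels add up as $c_\lambda^* = I_\lambda(u) + \sum_{j} I_{\lambda}^{\rho\equiv\rho_\infty}(v_j)$, and the energy estimate of Proposition~\ref{lambdaone} (which is precisely where $\lambda_1$ enters) combined with a uniform positive lower bound on the critical levels of $I_{\lambda}^{\rho\equiv\rho_\infty}$ forces $l=0$. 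You bypass this machinery entirely by exploiting the identity $I_\lambda(v)=\tfrac14\|v\|_{H^1(\R^3)}^2$ on $\mathcal{N}_\lambda$, valid exactly at $q=3$: once the weak limit $u$ is shown nontrivial (the only step genuinely requiring large $\lambda$, via Lemma~\ref{intoutsideballlemma}), one has $u\in\mathcal{N}_\lambda$, hence $\|u\|_{H^1(\R^3)}^2 = 4I_\lambda(u)\geq 4c_\lambda^*$, while weak lower semicontinuity gives $\|u\|_{H^1(\R^3)}^2\leq\liminf_n\|u_n\|_{H^1(\R^3)}^2 = 4c_\lambda^*$, and Hilbert-space geometry upgrades the resulting norm convergence to strong $H^1$-convergence, hence strong $E$-convergence under the norm equivalence $E(\R^3)\simeq H^1(\R^3)$ in force when $\rho\to\rho_\infty>\overline{M}$. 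The deferred check you flag --- that the $\lambda$-uniform bound $\|u_n\|_{E(\R^3)}<\overline{C}$ and the $\lambda$-uniform $L^3$ lower bound persist at the constrained level --- is carried out explicitly in the paper's proof of Theorem~\ref{groundstate sol} for the analogous sequence $(w_n)$, so it is indeed available. Notably, your argument never invokes the quantitative error estimate of Proposition~\ref{lambdaone} and so appears to require only $\lambda\geq\lambda_0$ rather than $\lambda\geq\max(\lambda_0,\lambda_1)$, a slight sharpening of the paper's statement.
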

\begin{proof}
The proof follows reasoning exactly as in Remark \ref{PSremark}. We leave out the details.
\end{proof}
 
\section{The case of coercive $\rho$}\label{coercive rho section}
\noindent In the present section $\lambda > 0$ is an arbitrary fixed value, and on $\rho$ we make the assumption that
\begin{enumerate}[label=$\mathbf{(\rho_{\arabic*})}$]
	\setcounter{enumi}{1}
	\item \label{coercive_rho} For every $M > 0$,
	\begin{equation*}
		\abs{x \in \R^N : \rho(x) \leq M} < \infty.
	\end{equation*}
\end{enumerate}

\begin{lemma}[\bf{Compactness property}]\label{compactembeddingofeintolpplus}
	Let $N = 3,4,5$, $\rho \in L^\infty_{\textrm{loc}}(\R^N)$ be nonnegative, satisfying \ref{coercive_rho}, and $q \in (1, \tstar -1)$. Then, $E(\R^N)$ is compactly embedded into $L^{q+1} (\R^N)$.
\end{lemma}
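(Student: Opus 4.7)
The plan is to prove that any bounded sequence $(u_n)_{n\in\N}\subset E(\R^N)$ admits a subsequence converging strongly in $L^{q+1}(\R^N)$ for $q+1\in(2,\tstar)$. First I would use the continuous embedding $E(\R^N)\subseteq H^1(\R^N)$ and the reflexivity of $E(\R^N)$ to extract a subsequence with $u_n\weakly u$ in $E(\R^N)$, hence in $H^1(\R^N)$; then, by the classical Rellich theorem applied on a diagonal sequence of balls together with a standard extraction, I may further assume $u_n\to u$ almost everywhere and in $L^{q+1}_{\textrm{loc}}(\R^N)$. The aim is then to upgrade this to convergence in $L^{q+1}(\R^N)$ by verifying the hypotheses of Vitali's convergence theorem for $(|u_n|^{q+1})_{n\in\N}$, namely equi-integrability and tightness.

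Equi-integrability is immediate: for any measurable $A\subseteq\R^N$, H\"older's inequality combined with the uniform $L^{\tstar}$ bound coming from the Sobolev embedding $H^1(\R^N)\hookrightarrow L^{\tstar}(\R^N)$ gives
\[
\int_A|u_n|^{q+1}\leq \|u_n\|_{L^{\tstar}(\R^N)}^{q+1}\,|A|^{1-(q+1)/\tstar},
\]
which is uniformly small when $|A|$ is small, since $q+1<\tstar$.

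Tightness is where the coercivity of $\rho$ plays its key role. For $M>0$, I would set $\Omega_M=\{x\in\R^N:\rho(x)\leq M\}$, which has finite measure by \ref{coercive_rho}. On $\R^N\setminus\Omega_M$ one has $\rho>M$, so Lemma \ref{weightedL3boundlemma} applied to each $u_n$, together with the uniform $E$-bound, yields
\[
\int_{\R^N\setminus\Omega_M}|u_n|^3\leq \frac{1}{M}\int_{\R^N}\rho|u_n|^3\leq \frac{C}{M}.
\]
I would then interpolate $L^{q+1}$ between $L^3$ and $L^{\tstar}$ when $q+1\geq 3$, and between $L^2$ and $L^3$ when $q+1\leq 3$, using the uniform $H^1$ control on $(u_n)$; in either regime one obtains $\int_{\R^N\setminus\Omega_M}|u_n|^{q+1}\leq C\,M^{-\kappa}$ for some $\kappa=\kappa(q,N)>0$ independent of $n$, which is exactly tightness. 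Applying Vitali's convergence theorem then delivers $u_n\to u$ strongly in $L^{q+1}(\R^N)$.

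I expect the main technical point to be the tightness step. One must cover the full range $q+1\in(2,\tstar)$ with a case split across the threshold $q+1=3$, since the natural control available transitions from $L^2\cap L^3$ (coming from $H^1$ and the Coulomb--Sobolev inequality) to $L^3\cap L^{\tstar}$. Once the $L^3$ smallness on $\{\rho>M\}$ is secured from Lemma \ref{weightedL3boundlemma}, the interpolation and resulting decay in $M$ are routine.
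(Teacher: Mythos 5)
Your proof is correct, and it reaches the same conclusion by a genuinely different organizational route. The paper proves compactness directly in $L^3$ (the case $q=2$) by splitting the tail $\R^N\setminus B_R$ into $A(R)=\{|x|>R,\rho\geq M\}$ and $B(R)=\{|x|>R,\rho<M\}$: on $A(R)$ the Coulomb--Sobolev estimate of Lemma~\ref{weightedL3boundlemma} controls $\int|u_n|^3$ by $C/(\lambda M)$, on $B(R)$ the small measure of $B(R)$ (Lemma~\ref{measureofBgoingtozerolemma}) and H\"older in $L^{2^*}$ control the remainder, and Rellich handles the ball $B_R$; the general $q$ then follows by interpolation against $L^2$ or $L^{2^*}$. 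You instead work at the level of a general $q+1\in(2,2^*)$ from the start, using Vitali's convergence theorem with the finite-measure set $\Omega_M=\{\rho\leq M\}$ as the ``core'': equi-integrability comes from H\"older against the uniform $L^{2^*}$ bound, and tightness from the same $L^3$ weighted estimate on $\{\rho>M\}$ plus interpolation (with the threshold at $q+1=3$). The two approaches use the identical key ingredients --- Lemma~\ref{weightedL3boundlemma}, the finiteness of $|\{\rho\leq M\}|$ from \ref{coercive_rho}, the Sobolev bound in $L^{2^*}$, and interpolation --- but your Vitali framework avoids having to fix $M$ and $R$ simultaneously and dispenses with the explicit reduction to $q=2$. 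One small caveat: you invoke $u_n\to u$ in $L^{q+1}_{\mathrm{loc}}$, i.e.\ on compacts, but $\Omega_M$ need not be bounded; to close the Vitali argument on $\Omega_M$ you either run Egorov on $\Omega_M$ directly (finite measure suffices), or split $\Omega_M$ into $\Omega_M\cap B_R$ and $\Omega_M\setminus B_R$ with the latter of small measure for $R$ large. This is routine, but worth stating since it is the one place where the two decompositions interact.
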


\begin{proof}
	By Lemma \ref{weightedL3boundlemma}, multiplying by $\lambda$ we obtain
	\begin{equation}\label{weightedL3}
		\lambda \intrn \rho(x) \abs{u}^3 \leq \Big( \frac{1}{\omega} \Big)^{\half} \norme{u}{\R^N}^3.
	\end{equation}
    Set
	\begin{equation*}
		A (R) = \{ x \in \R^N : \abs{x} > R, \ \rho (x) \geq M\},
	\end{equation*}
	\begin{equation*}
		B (R) = \{ x \in \R^N : \abs{x} > R, \ \rho (x) < M\}.
	\end{equation*}
	Without loss of generality, assume that $(u_n)_{n\in\N}\subset E(\R^N)$ is such that $\un \weakly 0$. For convenience, write
	\[
		\int_{\R^N \setminus B_R} \abs{\un}^3 = \int_{A (R)} \abs{\un}^3 + \int_{B (R)} \abs{\un}^3 
	\]
	where $B_R$ is a ball of radius $R$ centred at the origin. Fix $\delta > 0$ and pick $M$, $r$, $C$, such that $M > \frac{2}{\lambda \delta}\left(\frac{1}{\omega}\right)^{\frac{1}{2}}\sup_n \norme{\un}{\R^N}^3$, $r=\frac{\tstar}{3}>1$ and
	\begin{equation*}
		C \geq \sup\limits_{u \in E(\R^N) \setminus \{0\}} \frac{\norm{u}_{L^{2^*}(\R^N)}^3}{\norme{u}{\R^N}^3}.
	\end{equation*}
	Let $\frac{1}{r} + \frac{1}{r^{\prime}} = 1$. By Lemma \ref{measureofBgoingtozerolemma}, for every $M > 0$,
	and every $R>0$ large enough, it holds that 
	\begin{equation}
		|B (R)| \leq \Big[ \frac{\delta}{2C \sup_n \norme{\un}{\R^N}^3} \Big]^{r^{\prime}}.
	\end{equation}
	Since $N=3,4,5$, we can pick $r = \frac{\tstar}{3} > 1$ such that by H\"{o}lder inequality it holds that
	\begin{align*}
		\int_{B (R)} \abs{\un}^3 &\leq \Big( \int_{B (R)} \abs{\un}^{2^*} \Big)^{\frac{1}{r}} \Big( \int_{B (R)} 1 \Big)^{\frac{1}{r^{\prime}}}\\
		&\leq \norm{\un}_{L^{2^*} (\R^N)}^3 \cdot | B (R) |^{\frac{1}{r^{\prime}}}\\
		&\leq C \norme{\un}{\R^N}^3 \cdot | B (R) |^{\frac{1}{r^{\prime}}} \leq \frac{\delta}{2},
	\end{align*}
	Moreover, by our choice of $M$ and \eqref{weightedL3}, we see that
	\begin{equation*}
		\int_{A (R)} \abs{\un}^3  \leq \frac{1}{\lambda M}\left(\frac{1}{\omega}\right)^{\frac{1}{2}}||u_n||_{E(\R^N)}^3 \leq \frac{\delta}{2}.
	\end{equation*}
By the classical Rellich theorem, and since $\delta$ was arbitrary, this is enough to prove our lemma for $q = 2$. By interpolation the case $q \neq 2$ follows immediately, and this concludes the proof.
\end{proof}

Using the above lemma, and for $q\geq 3,$ it is easy to see that the Palais-Smale condition holds for $I_{\lambda}$ at any level. 

\begin{lemma}[\bf{Palais-Smale condition}]\label{pscconditionfori}
	Let $N = 3$, $\rho \in L^\infty_{\textrm{loc}}(\R^N)$ be nonnegative, satisfying \ref{coercive_rho}, and $q \in [3, \tstar -1)$. Then, $I_{\lambda}$ satisfies the Palais-Smale condition at every level $c\in \R$.
\end{lemma}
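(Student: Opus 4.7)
Let $(u_n)_{n\in\N}\subset E(\R^3)$ be a Palais-Smale sequence at level $c\in\R$, so that $I_\lambda(u_n)\to c$ and $I'_\lambda(u_n)\to 0$ in $E'(\R^3)$. The plan is to perform the standard three-step argument: bound the sequence, extract a weakly convergent subsequence whose limit is a critical point, and then upgrade weak to strong convergence via Brezis-Lieb splitting.

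\textbf{Step 1: Boundedness.} From the identity
\begin{equation*}
(q+1)I_\lambda(u_n)-\langle I'_\lambda(u_n),u_n\rangle=\frac{q-1}{2}\int_{\R^3}\bigl(|\nabla u_n|^2+u_n^2\bigr)+\frac{q-3}{4}\lambda^2\int_{\R^3}\rho\,\phi_{u_n}u_n^2,
\end{equation*}
valid for $q\geq 3$, and arguing exactly as in Lemma \ref{suff conds bounded PS large q} (the case $c>0$ hypothesis there plays no role: one only needs the constant $C_1=(q+1)|c|+1$ to control the left-hand side), I obtain that $(u_n)$ is bounded in $E(\R^3)$.

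\textbf{Step 2: Weak limit and criticality.} Since $E(\R^3)$ is reflexive, up to a subsequence $u_n\weakly u$ in $E(\R^3)$ and $u_n\to u$ a.e. Since $q+1\in(2,2^{\ast})$, Lemma \ref{compactembeddingofeintolpplus} yields $u_n\to u$ strongly in $L^{q+1}(\R^3)$. The boundedness of $(\phi_{u_n})$ in $D^{1,2}(\R^3)$ coming from \eqref{double integral}, combined with a.e.\ convergence, gives $\phi_{u_n}\weakly\phi_u$ in $D^{1,2}(\R^3)$; testing $I'_\lambda(u_n)\to 0$ against $\varphi\in C^\infty_c(\R^3)$ and passing to the limit (each term converges by weak $H^1$ convergence, local compactness, and the weak convergence of $\phi_{u_n}$) shows $I'_\lambda(u)=0$. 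In particular, $\langle I'_\lambda(u),u\rangle=0$.

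\textbf{Step 3: Strong convergence via Brezis-Lieb.} Set $v_n=u_n-u$, so that $v_n\weakly 0$ in $E(\R^3)$ and $v_n\to 0$ in $L^{q+1}(\R^3)$. The classical Brezis-Lieb lemma yields
\begin{equation*}
\|u_n\|_{H^1(\R^3)}^2=\|u\|_{H^1(\R^3)}^2+\|v_n\|_{H^1(\R^3)}^2+o(1),\qquad \|u_n\|_{L^{q+1}(\R^3)}^{q+1}=\|u\|_{L^{q+1}(\R^3)}^{q+1}+o(1),
\end{equation*}
while the nonlocal Brezis-Lieb lemma (Lemma \ref{nonlocalBL}) together with \eqref{double integral} gives
\begin{equation*}
\lambda^2\int_{\R^3}\rho\,\phi_{u_n}u_n^2=\lambda^2\int_{\R^3}\rho\,\phi_u u^2+\lambda^2\int_{\R^3}\rho\,\phi_{v_n}v_n^2+o(1).
\end{equation*}
Combining these decompositions in $\langle I'_\lambda(u_n),u_n\rangle$ and using that $\langle I'_\lambda(u_n),u_n\rangle\to 0$ (by Step 1 and the Palais-Smale assumption) and $\langle I'_\lambda(u),u\rangle=0$, I get
\begin{equation*}
\|v_n\|_{H^1(\R^3)}^2+\lambda^2\int_{\R^3}\rho\,\phi_{v_n}v_n^2\longrightarrow 0,
\end{equation*}
which, by the very definition of the $E$-norm, is equivalent to $\|v_n\|_{E(\R^3)}\to 0$.

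The only delicate point is the passage to the limit in the nonlocal term in Step 2, but this is handled routinely by the weak $D^{1,2}$-convergence of $\phi_{u_n}$; the Brezis-Lieb splitting in Step 3 is the real workhorse, and it is available precisely because Lemma \ref{nonlocalBL} applies to any bounded sequence in $E(\R^3)$.
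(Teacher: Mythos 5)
Your proof is correct and follows essentially the same approach the paper takes, which simply defers to the Brezis-Lieb splitting argument of Bonheure--Mercuri after invoking the compact embedding (Lemma \ref{compactembeddingofeintolpplus}) and the nonlocal Brezis-Lieb lemma (Lemma \ref{nonlocalBL}). Your observation that the hypothesis $c>0$ in Lemma \ref{suff conds bounded PS large q} is immaterial for the boundedness argument is accurate: the proof there only requires an upper bound on $(q+1)I_{\lambda}(u_n)-I_{\lambda}'(u_n)(u_n)$, which holds at any level.
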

\begin{proof}
Since by Lemma \ref{compactembeddingofeintolpplus} the embedding of $E(\R^3)$ into $L^{q+1}(\R^3)$ is compact, using Lemma \ref{nonlocalBL}, the conclusion follows arguing as in \cite[p. 1077]{Bonheure and Mercuri}.
\end{proof}

\subsection{Proof of Theorem \ref{existenceandleastenergy} and Theorem \ref{mountainpasssoltheoremlowp}}\label{proofs of existence theorems section}

\begin{proof}[Proof of Theorem \ref{existenceandleastenergy}]
    Using Lemma \ref{mpgfori} and Lemma \ref{pscconditionfori}, the Mountain-Pass Theorem yields the existence a mountain-pass type solution for all $q\in[3,\tstar-1)$. Namely, there exists $u \in E(\R^N)$ such that $I_{\lambda}(u) = c_{\lambda}$ and $I_{\lambda}'(u) = 0$, where $c_{\lambda}$ is given in \eqref{mountainpasslevelforI}. For $q > 3$, the mountain-pass level $c_{\lambda}$ has the characterisation
    \begin{equation*}
        c_{\lambda} = \inf_{u \in \mathcal{N}_{\lambda}} I_{\lambda}(u), \quad \mathcal{N}_{\lambda} := \{ u \in E(\R^N) \setminus \{0\} \ | \ I_{\lambda}' (u)(u) = 0 \},
    \end{equation*}
    and it follows that $u$ is a groundstate solution of $I_{\lambda}$. Since $I_{\lambda}(u)=I_{\lambda}(|u|)$, we can assume $u\geq0$, and so $u>0$ by the strong maximum principle, Proposition \ref{reg}. For $q=3$, we can show the existence of a positive mountain-pass solution applying the general min-max principle \cite[p.41]{Willem book}, and observing that, in our context, we can restrict to admissible curves $\gamma$'s which map into the positive cone $P\coloneqq \{u\in E(\R^3) : u\geq0\}.$ In fact, arguing as in \cite[p.481]{Mercuri and Willem}, since $I_{\lambda}$ satisfies the mountain-pass geometry by Lemma \ref{mpgfori}, it is possible to select a Palais-Smale sequence $(u_n)_{n\in\N}$ at the level $c_{\lambda}$ such that $$\textrm{dist}(u_n,P)\rightarrow 0,$$ from which it follows that $(u_n)_-\rightarrow 0$ in $L^{6}(\R^3),$ see also \cite[Lemma 2.2]{Bonheure Di Cosmo Mercuri}. Then, by construction and up to a subsequence, there exists a weak limit $u\geq 0,$ and hence, by Lemma \ref{pscconditionfori} a nontrivial nonnegative solution, the positivity of which holds by Proposition \ref{reg}. \newline
    The existence of a positive groundstate can be shown with a mild modification to the proof of Theorem \ref{groundstate sol}, using here that all the relevant convergence statements hold for any fixed $\lambda>0$ as a consequence of assumption \ref{coercive_rho} and Lemma \ref{compactembeddingofeintolpplus}.
  This is enough to conclude.
\end{proof}

\begin{proof}[Proof of Theorem \ref{mountainpasssoltheoremlowp}]
We can argue as in \cite[Theorem $1.3$]{Mercuri and Tyler}, based on \cite{Jeanjean and Tanaka} and on the compactness of the embedding of $E(\R^N)$ into $L^{q+1}(\R^N).$ The latter is provided in our context by Lemma \ref{compactembeddingofeintolpplus}. By these, there exists an increasing sequence $\mu_n \rightarrow 1$ and $(\un)_{n\in\N} \in E(\R^N)$ such that $I_{\mu_n, \lambda} (\un) = c_{\mu_n, \lambda}$ and $I_{\mu_n, \lambda}' (\un) = 0,$ where $I_{\mu_n, \lambda}$ and $c_{\mu_n, \lambda}$ are defined as in \eqref{definition I mu lambda} and \eqref{mp level low q}. By Lemma \ref{pohozaevlemma}, we see that
	\begin{equation}\label{poh soe ineq}
	    \frac{N-2}{2}\intrn (\abs{\nabla \un}^2 + \un^2) + \left( \frac{N+2+2k}{4} \right) \intrn \rho(x) \phi_{\un} \un^2 - \frac{N\mu_n}{q+1} \intrn \abs{\un}^{q+1} \leq 0.
	\end{equation}
	Setting $\alpha_n = \intrn (\abs{\nabla \un}^2 + \un^2)$, $\gamma_n = \lambda^2 \intrn \rho(x) \phi_{\un} \un^2$, $\delta_n = \mu_n \intrn \abs{\un}^{q+1},$ we can put together the equalities $I_{\mu_n, \lambda} (\un) = c_{\mu_n, \lambda}$ and  $I_{\mu_n, \lambda}' (\un)(\un) = 0$ with \eqref{poh soe ineq} obtaining the system
	\begin{equation}\label{SOE existence low q}
	    \begin{cases}
            \begin{array}{c c c c c c c}
	            \alpha_n & + & \gamma_n & - & \delta_n & = & 0,\\
	            \half \alpha_n & + & \frac{1}{4} \gamma_n & - &
	            \overqplus \delta_n & = & c_{\mu_n, \lambda},\\
	            \frac{N-2}{2} \alpha_n & + & \left( \frac{N+2+2k}{4} \right) \gamma_n & - & \frac{N}{q+1} \delta_n & \leq & 0,
	        \end{array}
	   \end{cases}
	\end{equation}
	
	\noindent which yields
	\begin{equation*}
	    \delta_n \leq \frac{c_{\mu_n, \lambda}(6-N + 2k) (q+1)}{2(q-2) + k(q-1)}, \quad \gamma_n \leq \frac{2c_{\mu_n, \lambda}\big( 2(q+1)-N (q-1)\big)}{2(q-2) + k(q-1)}, \quad \text{and} \ \alpha_n = \delta_n - \gamma_n.
	\end{equation*}
	We note that $k > \frac{-2(q-2)}{(q-1)} > \frac{N-6}{2}$ since $q<\tstar-1$, and so since $\alpha_n, \gamma_n, \delta_n$ are all nonnegative, it follows that $\alpha_n, \gamma_n, \delta_n$ are all bounded. Hence the sequence $(\un)_{n \in \N}$ is bounded and there exists $u \in E(\R^N)$ such that, up to a subsequence, $\un \weakly u$ in $E(\R^N)$. Using Lemma \ref{compactembeddingofeintolpplus} and arguing as in \cite[Theorem 1]{Bonheure and Mercuri} we obtain that $\norme{\un}{\R^N}^2 \rightarrow \norme{u}{\R^N}^2$ and
	\begin{equation}\label{convergenceofmufunctional}
	    c_{\mu_n, \lambda} = I_{\mu_n, \lambda} (\un) \rightarrow I_{\lambda} (u).
	\end{equation}
	It follows that $\un \rightarrow u$ in $E(\R^N)$,
which combined with the left-continuity property of the levels \cite[Lemma $2.2$]{Ambrosetti and Ruiz}, namely  $c_{\mu_n, \lambda} \rightarrow c_{1, \lambda}={c}_{\lambda}$ as $\mu_n \nearrow 1,$ yields $I_{\lambda} (u) = {c}_{\lambda}.$ Since $u$ is a critical point by the weak convergence, it follows that $u$ is mountain-pass solution. Finally, the existence of a groundstate solution is based on minimising over the set of nontrivial critical points of $I_{\lambda},$ and carrying out an identical argument to the above to show the strong convergence of such a minimising sequence, again using Lemma \ref{compactembeddingofeintolpplus}. This concludes the proof.
\end{proof}
 
\subsection{Proof of Theorem \ref{least energy pure homog rho}}
Under an additional hypotheses on $\rho$, we now prove that the energy level of the groundstate solutions coincide with the mountain-pass level.
\begin{proof}[Proof of Theorem \ref{least energy pure homog rho}]
By Proposition \ref{variational characterisation MP level low q}, it holds that
$${c}_{\lambda}=\inf_{u\in \bar{\mathcal{M}}_{\lambda,\nu}}I_{\lambda}(u),$$
where $\bar{\mathcal{M}}_{\lambda,\nu}$ is defined in \eqref{definition of M lambda}. Since $J_{\lambda,\nu}(u)=0$ is equivalent to the Pohozaev equation given by Lemma \ref{pohozaevlemma} minus the equation $\nu I_{\lambda}'(u)(u)=0$, it is clear that $\bar{\mathcal{M}}_{\lambda,\nu}$ contains all of the critical points of $I_{\lambda}$, and thus the mountain-pass solutions that we find in Theorem \ref{existenceandleastenergy} ($q=3$) and Theorem \ref{mountainpasssoltheoremlowp} ($q<3$) are groundstates. This completes the proof.
\end{proof}
\color{black}
\section{Multiplicity results: coercive $\rho$}\label{5.1}
In the current section, we discuss the existence of high energy solutions in the case $\rho$ satisfies \ref{coercive_rho}. Throughout what follows, we denote the unit ball in $E(\R^N)$ by $B_1$. Moreover, since $\lambda$ does not play any role and can be fixed arbitrarily under assumption \ref{coercive_rho}, we set $\lambda\equiv1$ for the sake of simplicity and define
\begin{equation*}
	I(u) \coloneqq \frac{1}{2}\int_{\R^N}(|\nabla u|^2 + u^2)+\frac{1}{4}\int_{\R^N}\int_{\R^N}\frac{u^2(x)\rho (x) u^2(y) \rho (y)}{|x-y|^{N-2}}\dif x\dif y -\frac{1}{q+1}\int_{\R^N}|u|^{q+1}.
\end{equation*}

\subsection{Preliminaries}\label{prelims for multiplicity section}
We will now discuss some preliminaries that will be used in proving both Theorem \ref{first multiplicity theorem} and \ref{partial multiplicity result low q}. Following \cite{Ambrosetti and Rabinowitz} we set
\begin{equation}\label{A hat}
\hat{A}_0 =\{u\in E(\R^N) : I(u)\geq0\},
\end{equation}
and
\begin{align}\label{gamma star}
\Gamma^* =\{&h\in C(E(\R^N),E(\R^N)): h(0)=0, h \text{ is an odd homeomorphism of } E(\R^N) \\
&\text{ onto } E(\R^N), h(B_1)\subset \hat{A}_0\}.\nonumber
\end{align}
In the next lemma, we establish a result that allows us to obtain high energy solutions in our Banach space setting. Before stating the lemma, we note that since the biorthogonal system given by Lemma \ref{separability} is fundamental, then, for any $m\in\N$, it holds that 
\[E(\R^N)=\text{span}\{e_1,\dots, e_m\}\oplus \overline{\text{span}}\{e_{m+1}, \dots\}.\] 
Thus, throughout what follows we set
\[E_m= \text{span}\{e_1,\dots, e_m\},\]
\[E_m^{\perp}=\overline{\text{span}}\{e_{m+1}, \dots\},\]
and note that, for any $m\in\N$, $E_m$ and $E_m^{\perp}$ define algebraically and topologically complementary subspaces of $E(\R^N)$.

\begin{lemma}[\bf{Divergence of min-max levels $d_m$}]\label{divergence of critical levels}
Let $N \geq 3$ and $q>1$. Suppose $\rho\in L^{\infty}_{loc}(\R^N)$ is nonnegative, satisfying \ref{coercive_rho}. Define 
\begin{equation}\label{def of dm}
d_m\coloneqq \sup_{h\in \Gamma^*}\inf_{u\in\partial B_1 \cap E_{m-1}^{\perp}} I(h(u)),
\end{equation}
where $\Gamma^*$ is given by \eqref{gamma star}. Then, $d_m\to+\infty$ as $m\to+\infty$.
\end{lemma}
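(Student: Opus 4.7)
The plan is to leverage the compactness from Lemma \ref{compactembeddingofeintolpplus} to construct, for each large $m$, an admissible direction-dependent radial dilation $h_m \in \Gamma^*$ that sends $\partial B_1 \cap E_{m-1}^\perp$ to a sphere of radius $R_m \to \infty$ in $E_{m-1}^\perp$, forcing $I \circ h_m$ to blow up on this set.

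First, I would set
\[
\beta_m := \sup\bigl\{\|u\|_{L^{q+1}(\R^N)} : u \in E_{m-1}^\perp,\ \|u\|_{E(\R^N)} \leq 1\bigr\}
\]
and show $\beta_m \to 0$ as $m \to \infty$. Arguing by contradiction, a sequence $u_m \in E_{m-1}^\perp$ with $\|u_m\|_{E(\R^N)} \leq 1$ and $\|u_m\|_{L^{q+1}} \geq \delta > 0$ admits, by reflexivity of $E(\R^N)$ together with Lemma \ref{compactembeddingofeintolpplus}, a subsequence $u_m \weakly u^*$ in $E(\R^N)$ and $u_m \to u^*$ in $L^{q+1}(\R^N)$ with $\|u^*\|_{L^{q+1}} \geq \delta$; however $\langle u_m, e_k^* \rangle = 0$ whenever $m > k$, hence $\langle u^*, e_k^* \rangle = 0$ for every $k$, and the weak$^*$-density of $\mathrm{span}\{e_k^*\}_{k\in\N}$ in $E^*(\R^N)$ from Lemma \ref{separability} forces $u^* = 0$, a contradiction.

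Second, for each large $m$ I would construct $h_m$ as a direction-dependent radial map $h_m(u) = \mu_m(u/\|u\|_{E(\R^N)})\, u$ for $u \neq 0$, where $\mu_m : \partial B_1 \to (0,\infty)$ is continuous and even, taking the large value $R_m := c\beta_m^{-(q+1)/(q-1)}$ on $\partial B_1 \cap E_{m-1}^\perp$ and a small value $\eta > 0$ on directions with substantial $P_{m-1}$-component, interpolated continuously via the function $\hat u \mapsto \|Q_m \hat u\|_{E(\R^N)}$ supplied by the biorthogonal projections of Lemma \ref{separability}. Such $h_m$ is an odd homeomorphism of $E(\R^N)$. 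Admissibility $h_m(B_1) \subset \hat A_0$ amounts to $I(s\hat u) \geq 0$ for $s \in [0, \mu_m(\hat u)]$ on each ray: for $s$ below a Sobolev threshold $C_0$ this follows from the usual mountain-pass-geometry argument that only uses $\|\hat u\|_{L^{q+1}} \lesssim \|\hat u\|_{E(\R^N)}$; for $s \in [C_0, R_m]$ and $\hat u \in \partial B_1 \cap E_{m-1}^\perp$, the bound $\|\hat u\|_{L^{q+1}} \leq \beta_m$ makes the nonlinear term $s^{q+1}\beta_m^{q+1}/(q+1)$ small enough that it is strictly dominated by the positive quadratic-plus-quartic part of $I(s\hat u)$, as a direct case analysis in the variables $a=\|\hat u\|_{H^1}^2$ and $b=\sqrt{\omega\int\!\!\int \hat u^2\rho\,\hat u^2\rho/(\omega|x-y|^{N-2})}$ with $a+b=1$ shows. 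Evaluating at $s = R_m$ on $\hat u \in \partial B_1 \cap E_{m-1}^\perp$ then yields, for sufficiently small $c>0$,
\[
I(R_m u) \geq \Bigl(\tfrac{c^2}{2} - \tfrac{c^{q+1}}{q+1}\Bigr)\beta_m^{-2(q+1)/(q-1)} - \tfrac{\omega}{4}\ \longrightarrow\ +\infty
\]
as $m \to \infty$, uniformly in $u \in \partial B_1 \cap E_{m-1}^\perp$, which yields $d_m \to \infty$.

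The main obstacle is the choice of $\mu_m$ on the interpolation region: on rays $\hat u$ with both a sizeable $P_{m-1}$-component and a sizeable $Q_m$-component, $\|\hat u\|_{L^{q+1}}$ is no longer controlled by $\beta_m$, so $\mu_m(\hat u)$ must be capped at a value small enough (at most the local positivity radius $\delta_0$ from Lemma \ref{nonzeroweaklimitlemma}) while transitioning continuously to $R_m$ on $\partial B_1 \cap E_{m-1}^\perp$. This continuity plus admissibility check is the delicate point, but is a standard construction exploiting the smallness of $\beta_m$ and the continuity of the projections $P_{m-1}$, $Q_m$.
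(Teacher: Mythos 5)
Your first step — $\beta_m\to0$ via reflexivity, Lemma \ref{compactembeddingofeintolpplus}, and the totality of the Markushevic basis from Lemma \ref{separability} — is the contrapositive of the paper's opening claim that $\tilde d_m\to\infty$, and your closing bound $I(R_m u)\ge\bigl(\tfrac{c^2}{2}-\tfrac{c^{q+1}}{q+1}\bigr)\beta_m^{-2(q+1)/(q-1)}-\tfrac{\omega}{4}$ on $\partial B_1\cap E_{m-1}^\perp$ matches the paper's final estimate. Where you genuinely diverge is the construction of the admissible map. The paper uses a \emph{linear} odd homeomorphism $H_m$ built from the decomposition $E(\R^N)=E_m\oplus E_m^\perp$, scaling one summand by a large constant proportional to $\tilde d_m$ and the other by a small constant $\delta$, and verifies $H_m(B_1)\subset\hat A_0$ by a compactness/contradiction argument that produces a suitable $\delta=\delta(m)$. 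You instead propose a nonlinear, direction-dependent radial dilation $h_m(u)=\mu_m(u/\|u\|_{E(\R^N)})\,u$; this is a legitimate alternative in spirit, but it requires a verification that you do not supply.

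The gap you yourself flag — the construction of $\mu_m$ on the interpolation region and the verification that $h_m(B_1)\subset\hat A_0$ there — is precisely the crux of the lemma, and labelling it ``a standard construction'' does not fill it. Moreover, your proposed interpolation variable $\hat u\mapsto\|Q_m\hat u\|_{E(\R^N)}$ is ill-suited: it is not small away from $\partial B_1\cap E_{m-1}^\perp$, and in any case it does not directly control the quantity that feeds the negative term of $I$, namely $\|\hat u\|_{L^{q+1}(\R^N)}$. A workable repair is to set $\mu_m(\hat u)=\min\{R_m,\,c_0\,\|\hat u\|_{L^{q+1}(\R^N)}^{-(q+1)/(q-1)}\}$ for a sufficiently small universal $c_0>0$: this is continuous, even, bounded above and away from zero on $\partial B_1$, equals $R_m$ on $\partial B_1\cap E_{m-1}^\perp$, and gives $s^{q-1}\|\hat u\|_{L^{q+1}}^{q+1}\le c_0^{q-1}$ for $s\le\mu_m(\hat u)$, from which $I(s\hat u)\ge0$ follows by the case split on $a=\|\hat u\|^2_{H^1(\R^N)}$ versus $b=1-a$ that you gesture at. Until this (or the paper's $\delta(m)$-compactness argument) is written out in full, the admissibility $h_m\in\Gamma^*$ is not established and the proof is incomplete.
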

\begin{proof}
First we set 
\[T= \left\{u\in E(\R^N)\setminus \{0\}:||u||_{H^1(\R^N)}^2=||u||_{L^{q+1}(\R^N)}^{q+1}\right\}\]
and
\[\tilde{d}_m= \inf_{u\in T\cap E_m^{\perp}}||u||_{E(\R^N)},\]
and claim that $\tilde{d}_m\to+\infty$ as $m\to+\infty$. To see this, assume to the contrary that there exists $u_m\in T \cap E_m^{\perp}$ and some $d>0$ such that $||u_m||_{E(\R^N)}\leq d$ for all $m\in\N$. Since $<e_n^*,u_m>=0$ for all $m\geq n$ and the $e_n^*$'s are total by Lemma \ref{separability}, then it follows that $u_m\rightharpoonup 0$ in $E(\R^N)$ (see e.g.\ \cite{Szulkin}). Since $E(\mathbb R^N)$ is compactly embedded into $L^{q+1}(\mathbb R^N)$ by Lemma \ref{compactembeddingofeintolpplus}, it follows that $u_m\to0$ in $L^{q+1}(\mathbb R^N)$. However, since $u_m\in T$, it follows from the Sobolev inequality that 
\[||u_m||_{H^1(\mathbb R^N)}^{q+1}\geq S_{q+1}^{q+1}||u_m||_{L^{q+1}(\mathbb R^N)}^{q+1}=S_{q+1}^{q+1}||u_m||_{H^{1}(\mathbb R^N)}^{2},\]
from which we deduce
\[||u_m||_{L^{q+1}(\mathbb R^N)}^{q+1}\geq S_{q+1}^{2(q+1)/(q-1)}>0.\]
This shows that $u_m$ is bounded away from $0$ in $L^{q+1}(\mathbb R^N)$, a contradiction, and so we have proved that
\begin{equation}\label{divergence tilde d}
\tilde{d}_m\to+\infty \text{ as } m\to+\infty.
\end{equation}
Now notice that since $E_m$ and $E_m^{\perp}$ are complementary subspaces, it holds that there exists a $\bar{C}\geq1$ such that each $u\in B_1$ can be uniquely written as 
\begin{equation}\label{decomposition of E functions}
u=v+w, \text{ with } v\in E_m, w\in E_m^{\perp},
\end{equation}
\begin{equation}\label{v less than C}
||v||_{E(\mathbb R^N)}\leq \bar{C}||u||_{E(\mathbb R^N)}\leq\bar{C},
\end{equation}
\begin{equation}\label{w less than C}
||w||_{E(\mathbb R^N)}\leq \bar{C}||u||_{E(\mathbb R^N)}\leq\bar{C},
\end{equation}
as a consequence of the open mapping theorem, see \cite[p.37]{Brezis Book}. Define $h_m:E_m^{\perp}\to E_m^{\perp}$ by
\[h_m(u)=(\bar{C}K)^{-1}\tilde{d}_m u,\]
where 
\[K>\max\left\{1,\left(\frac{4}{q+1}\right)^{\frac{1}{q-1}}\right\},\]
and note that $h_m$ is an odd homeomorphism of $E_m^{\perp}$ onto $E_m^{\perp}$. Now, for any $u\in E(\R^N) \setminus \{0\}$, there exists a unique $\beta(u)>0$ such that $\beta(u)u\in T$, namely
\begin{equation}\label{def of beta u}
\beta(u) = \left(\frac{||u||_{H^1(\mathbb R^N)}^2}{||u||_{L^{q+1}(\mathbb R^N)}^{q+1}}\right)^{\frac{1}{q-1}}.
\end{equation}
If we define
\[I_0(u)= \frac{1}{2} ||u||_{H^1(\mathbb R^N)}^2-\frac{1}{q+1}||u||_{L^{q+1}(\mathbb R^N)}^{q+1},\]
then for each $u\in E(\R^N) \setminus \{0\}$, it holds that
\[I_0(tu)=\frac{t^2}{2} ||u||_{H^1(\mathbb R^N)}^2-\frac{t^{q+1}}{q+1}||u||_{L^{q+1}(\mathbb R^N)}^{q+1}\]
is a monotone increasing function for $t\in[0,\beta(u)]$ with a maximum at $t=\beta(u)$. Note that for each $u\in (E_m^{\perp}\cap B_{\bar{C}})\setminus \{0\}$, by the definition of $\bar{d}_m$ and $\beta(u)$, we have
\begin{align}\label{d m over beta u}
    \bar{C}^{-1}\tilde{d}_m\leq \bar{C}^{-1} ||\beta(u)u||_{E(\mathbb R^N)}\leq \beta(u),
\end{align}
and so since $K\geq1$, it holds that
\[(\bar{C}K)^{-1}\tilde{d}_m\leq \bar{C}^{-1}\tilde{d}_m\leq \beta(u),\quad \text{ for all } u\in (E_m^{\perp}\cap B_{\bar{C}})\setminus \{0\}.\]
Putting everything together, it follows that
\[I_0(h_m(u))=I_0((\bar{C}K)^{-1}\tilde{d}_mu)>0\quad \text{ for all } u\in (E_m^{\perp}\cap B_{\bar{C}})\setminus \{0\}.\]
Moreover,
\[h_m(0)=0.\]
Therefore,
\begin{equation}\label{image of Em orthog intersect ball}
h_m(E_m^{\perp}\cap B_{\bar{C}})\subset \left\{u\in E(\R^N): I_0(u)\geq 0\right\}.
\end{equation}
Now, for each $m\in\N$ and some $\delta>0$, define $\tilde{h}_m:E_m\times E_m^{\perp}\to E_m\times E_m^{\perp}$ by
\[\tilde{h}_m([v,w])=[\delta v,\, (\bar{C}K)^{-1}\tilde{d}_mw ].\]
Notice that $\tilde{h}_m$ is an odd homeomorphism of $E_m\times E_m^{\perp}$ onto $E_m\times E_m^{\perp}$. Moreover, by \eqref{decomposition of E functions}, the function $g_m:E_m\times E_m^{\perp}\to E(\R^N)$ defined by
\[g_m([v,w])=v+w,\]
is an odd homeomorphism. Hence, defining $H_m:E(\R^N)\to E(\R^N)$ as
\[H_m= g_m\circ \tilde{h}_m \circ g^{-1}_m,\]
we see that $H_m$ is an odd homeomorphism of $E(\R^N)$ onto $E(\R^N)$. By \eqref{decomposition of E functions}-\eqref{w less than C}, it holds that
\[B_1\subseteq g_m(\left\{E_m\cap B_{\bar{C}}\right\}\times\{E_m^{\perp}\cap B_{\bar{C}}\}),\]
and so
\begin{align}\label{image of ball contained in Z}
    H_m(B_1)&\subseteq H_m(g_m(\{E_m\cap B_{\bar{C}}\}\times\{E_m^{\perp}\cap B_{\bar{C}}\}))\\
    &=g_m(\tilde{h}_m(\{E_m\cap B_{\bar{C}}\}\times\{E_m^{\perp}\cap B_{\bar{C}}\}))\nonumber\\
    &=g_m (\{\delta(E_m\cap B_{\bar{C}})\}\times\{\bar{C}^{-1}K^{-1}\tilde{d}_m (E_m^{\perp}\cap B_{\bar{C}})\})\nonumber\\
    &=\left\{u\in E(\R^N): u=v+w, v\in \delta(E_m\cap B_{\bar{C}}), w\in \bar{C}^{-1}K^{-1}\tilde{d}_m (E_m^{\perp}\cap B_{\bar{C}}) \right\}\nonumber\\
    &=:Z_{m,\delta}.\nonumber
\end{align}
Now, fix $m\in\N$. We claim that $$Z_{m,\delta}\subset \left\{u\in E(\R^N): I_0(u) > 0\right\} \cup \{0\}$$ for some $\delta=\delta(m)>0$. To see this, assume, by contradiction, that there exists $\delta_j\to 0$ and $u_j\notin \left\{u\in E(\R^N): I_0(u) > 0\right\} \cup \{0\}$ such that $u_j\in Z_{m,\delta_j}$. Then, by definition of $Z_{m,\delta_j}$, it holds that
\[||u_j||_{E(\R^N)}\leq||v_j||_{E(\R^N)}+||w_j||_{E(\R^N)}\leq \delta_j\bar{C}+K^{-1}\tilde{d}_m,\]
which implies $u_j$ is bounded.
Thus, up to a subsequence $u_j\rightharpoonup \bar{u}$ in $E(\R^N)$ and so it follows that $u_j\rightharpoonup \bar{u}$ in $H^1(\R^N)$. Moreover, since $E(\mathbb R^N)$ is compactly embedded into $L^{q+1}(\mathbb R^N)$ by Lemma \ref{compactembeddingofeintolpplus}, it follows that $u_j\to\bar{u}$ in $L^{q+1}(\mathbb R^N),$ with $||\bar{u}||_{L^{q+1}(\R^N)}^{q+1}>0$ by previous arguments. Thus, by the weak lower semicontinuity of the $H^1(\R^N)$ norm and the strong convergence in $L^{q+1}(\mathbb R^N)$, we deduce that 
\[\frac{1}{2}||\bar{u}||_{H^1(\R^N)}^2 \leq \frac{1}{q+1}||\bar{u}||_{L^{q+1}(\R^N)}^{q+1},\]
which implies $\bar{u}\notin \left\{u\in E(\R^N): I_0(u) > 0\right\} \cup \{0\}$. On the other hand, since $\delta_j\to0$, then $v_j\to 0$. It follows from this and \eqref{image of Em orthog intersect ball} that $\bar{u}\in \bar{C}^{-1}K^{-1}\tilde{d}_m (E_m^{\perp}\cap B_{\bar{C}})\subset \left\{u\in E(\R^N): I_0(u) > 0\right\} \cup \{0\}$. Hence, we have reached a contradiction and so the claim holds. Thus, using this and \eqref{image of ball contained in Z}, for each $m\in\N$, we pick $\delta=\delta(m)>0$ so that 
\begin{align*}
    H_m(B_1)\subset \left\{u\in E(\R^N): I_0(u) > 0\right\} \cup \{0\} \subset \left\{u\in E(\R^N): I(u)\geq 0\right\}=\hat{A}_0,
\end{align*} 
namely $H_m\in\Gamma^*$, where $\hat{A}_0$ and $\Gamma^*$ are given by \eqref{A hat} and \eqref{gamma star}, respectively. We can therefore see that
\begin{equation}\label{lower bound d m}
d_{m+1}= \sup_{h\in \Gamma^*}\inf_{u\in\partial B_1 \cap E_{m}^{\perp}} I(h(u))\geq \inf_{u\in\partial B_1 \cap E_{m}^{\perp}} I(H_{m}(u)).
\end{equation}
Now take $u\in \partial B_1 \cap E_{m}^{\perp}$. Then, using \eqref{def of beta u}, \eqref{d m over beta u} and the fact that $\int_{\R^N}\rho \phi_u u^2 =\omega^{-1}(1-||u||_{H^1(\R^N)}^2)^2$, it holds that
\begin{align*}
    I(H_m(u))&=\frac{1}{2}(\bar{C}^{-1}K^{-1}\tilde{d}_m)^2||u||_{H^1(\R^N)}^2+\frac{1}{4}(\bar{C}^{-1}K^{-1}\tilde{d}_m)^4\int_{\R^N}\rho\phi_u u^2\\
    &\qquad\qquad-\frac{1}{q+1}(\bar{C}K)^{-q-1}\tilde{d}_m^{q+1}||u||_{L^{q+1}(\R^N)}^{q+1}\\
    &=\frac{1}{2}(\bar{C}^{-1}K^{-1}\tilde{d}_m)^2||u||_{H^1(\R^N)}^2+\frac{1}{4}(\bar{C}^{-1}K^{-1}\tilde{d}_m)^4\int_{\R^N}\rho\phi_u u^2\\
    &\qquad\qquad-\frac{(\bar{C}K)^{-q-1}\tilde{d}_m^{2}}{q+1}\left(\frac{\tilde{d}_m}{\beta(u)}\right)^{q-1}||u||_{H^1(\R^N)}^{2}\\
    &\geq\frac{1}{2}(\bar{C}^{-1}K^{-1}\tilde{d}_m)^2\left(1-\frac{2K^{1-q}}{q+1}\right)||u||_{H^1(\R^N)}^2\\
    &\qquad\qquad+\frac{1}{4\omega}(\bar{C}^{-1}K^{-1}\tilde{d}_m)^4\left(1-||u||_{H^1(\R^N)}^{2}\right)^2\\
    &\geq \min \left\{K_1\tilde{d}_m^2,K_2\tilde{d}_m^4 \right\} \left(||u||_{H^1(\R^N)}^{4}-||u||_{H^1(\R^N)}^{2}+1\right)\\
    &\geq \frac{3}{4}\min \left\{K_1\tilde{d}_m^2,K_2\tilde{d}_m^4 \right\},
\end{align*}
where $K_1\geq \frac{1}{4\bar{C}^{2}K^{2}}$ by our choice of $K$ and $K_2= \frac{1}{4\omega\bar{C}^4K^4}$. Finally, using this, \eqref{lower bound d m}, and \eqref{divergence tilde d}, we obtain
\begin{align*}
    d_{m+1}&\geq \inf_{u\in\partial B_1 \cap E_{m}^{\perp}} I(H_{m}(u))\\
    &\geq \frac{3}{4}\min \left\{K_1\tilde{d}_m^2,K_2\tilde{d}_m^4 \right\}\to+\infty,\quad \text{ as } m\to+\infty.
\end{align*}
This completes the proof.
\end{proof}

\subsection{Proof of Theorem \ref{first multiplicity theorem}}\label{proof of multiplicity theorem section}
In order to prove Theorem \ref{first multiplicity theorem}, we will need some background material including the notion of the Krasnoselskii-genus and its properties. Throughout what follows we let $G$ be a compact topological group. Following \cite{Costa}, we begin with a number of definitions that we will need before introducing the notion of the Krasnoselskii-genus.
\begin{definition}[\bf{Isometric representation}]
The set $\{T(g) : g\in G\}$ is an {isometric representation} of $G$ on $E$ if $T(g):E\to E$ is an isometry for each $g\in G$ and the following hold:
\begin{enumerate}[label=(\roman*)]
\item $T(g_1+g_2)=T(g_1)\circ T(g_2)$ for all $g_1,g_2\in G$
\item $T(0) =I$, where $I:E\to E$ is the identity map on $E$
\item $(g,u) \mapsto T(g)(u)$ is continuous.
\end{enumerate}
\end{definition}
\begin{definition}[\bf{Invariant subset}]
A subset $A\subset E$ is {invariant} if $T(g)A=A$ for all $g\in G$.
\end{definition}
\begin{definition}[\bf{Equivariant mapping}]
A mapping $R$ between two invariant subsets $A_1$ and $A_2$, namely $R:A_1\to A_2$, is said to be {equivariant} if $R\circ T(g)=T(g)\circ R$ for all $g\in G$.
\end{definition}
\begin{definition}[\bf{The class $\mathcal{A}$}]
We denote the class of all closed and invariant subsets of $E$ by $\mathcal{A}$. Namely,
\[\mathcal{A} \coloneqq \{A\subset E : A \text{ closed},\, T(g)A=A \,\, \forall g\in G\}.\]
\end{definition}
\begin{definition}[\bf{${G}$-index with respect to $\mathcal{A}$}]
A {${G}$-index} on $E$ with respect to $\mathcal{A}$ is a mapping $\text{ind}:\mathcal{A}\to\N\cup\{+\infty\}$ such that the following hold:
\begin{enumerate}[label=(\roman*)]
\item $\text{ind}(A)=0$ if and only if $A=\emptyset$.
\item If $R:A_1\to A_2$ is continuous and equivariant, then $\text{ind}(A_1)\leq \text{ind}(A_2)$.
\item $\text{ind}(A_1\cup A_2)\leq \text{ind}(A_1)+ \text{ind}(A_2)$.
\item If $A\in \mathcal{A}$ is compact, then there exists a neighbourhood $N$ of $A$ such that $N\in\mathcal{A}$ and $\text{ind}(N)=\text{ind}(A)$.
\end{enumerate}
\end{definition}
With these definitions in place, we are ready to introduce the concept of the Krasnoselskii-genus.
\begin{lemma}[\bf{The Krasnoselskii-genus}]
Let $G=\Z_2=\{0,1\}$ and define $T(0)=I$, $T(1)=-I$, where $I:E\to E$ is the identity map on $E$. Given any closed and symmetric with respect to the origin subset $A\in\mathcal{A}$, define $\gamma(A)=k\in\N$ if $k$ is the smallest integer such that there exists some odd mapping $\varphi \in C(A,\R^k \setminus\{0\})$. Moreover, define $\gamma(A)=+\infty$ if no such mapping exists and $\gamma(\emptyset)=0$. Then, the mapping $\gamma:\mathcal{A}\to \N\cup \{+\infty\} $ is a $\Z_2$-index on $E$, called the Krasnoselskii-genus.
\end{lemma}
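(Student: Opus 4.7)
The plan is to verify one by one the four axioms of a $\Z_2$-index for the map $\gamma$, noting first that in our setting ($T(0)=I$, $T(1)=-I$) invariance of a set $A\in\mathcal{A}$ reduces to symmetry with respect to the origin, and equivariance of a map coincides with oddness. Property (i) is essentially tautological: $\gamma(\emptyset)=0$ by definition, and conversely for $A\neq\emptyset$ either $0\in A$, in which case oddness forces $\varphi(0)=0$ so no continuous odd map can reach $\R^k\setminus\{0\}$ and $\gamma(A)=+\infty$, or $0\notin A$ and one exhibits an odd continuous map into $\R^k\setminus\{0\}$ for $k$ sufficiently large, giving $\gamma(A)\geq 1$. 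Property (ii) follows by precomposition: if $\varphi:A_2\to\R^k\setminus\{0\}$ is odd and continuous with $k=\gamma(A_2)<+\infty$, then $\varphi\circ R:A_1\to\R^k\setminus\{0\}$ is odd and continuous by equivariance of $R$, hence $\gamma(A_1)\leq \gamma(A_2)$; the case $\gamma(A_2)=+\infty$ is empty.

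For property (iii), with $\gamma(A_i)=k_i<+\infty$ and odd continuous witnesses $\varphi_i:A_i\to\R^{k_i}\setminus\{0\}$, the strategy is to invoke the Tietze--Dugundji extension theorem in the metric space $E(\R^N)$ to produce continuous $\tilde{\varphi}_i:E(\R^N)\to\R^{k_i}$ extending $\varphi_i$, and then symmetrise by replacing $\tilde{\varphi}_i(x)$ with $\tfrac{1}{2}\bigl(\tilde{\varphi}_i(x)-\tilde{\varphi}_i(-x)\bigr)$. Since $\varphi_i$ is already odd, this modification agrees with $\varphi_i$ on $A_i$ and so the extensions remain nonvanishing there. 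Direct-summing yields an odd continuous map $\Phi(x)=(\tilde{\varphi}_1(x),\tilde{\varphi}_2(x)):A_1\cup A_2\to\R^{k_1+k_2}\setminus\{0\}$, proving $\gamma(A_1\cup A_2)\leq k_1+k_2$; the infinite-genus cases are immediate.

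For property (iv), assume $\gamma(A)=k<+\infty$ and $A$ compact. Extend an optimal witness $\varphi$ to an odd continuous $\tilde{\varphi}:E(\R^N)\to\R^k$ as above. Compactness of $A$ yields $|\tilde{\varphi}|\geq c>0$ on $A$, and by continuity the closed symmetric neighbourhood $N_{\epsilon}=\{x\in E(\R^N):\mathrm{dist}(x,A)\leq\epsilon\}$, which is invariant since $A$ is, lies inside $\{|\tilde{\varphi}|\geq c/2\}$ for $\epsilon$ small; this gives $\gamma(N_{\epsilon})\leq k$, while the reverse inequality $\gamma(A)\leq\gamma(N_{\epsilon})$ is property (ii) applied to the equivariant inclusion $A\hookrightarrow N_{\epsilon}$. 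When $\gamma(A)=+\infty$, any closed symmetric neighbourhood works by (ii) alone.

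The main obstacles are only minor bookkeeping: checking that the symmetrisation step after extension preserves nonvanishing on each $A_i$ (which is automatic because each $\varphi_i$ is already odd, so the symmetrised map restricts to $\varphi_i$ there) and verifying that the $\epsilon$-neighbourhood in (iv) is both symmetric and closed (immediate from the symmetry of $A$ and the continuity of the distance function). None of these points present any serious difficulty, and the lemma is ultimately a packaging of the Tietze--Dugundji extension principle together with the algebraic structure of odd maps.
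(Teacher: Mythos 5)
The paper gives no proof of this lemma; it simply cites Proposition~2.1 of Costa's book. Your argument reconstructs what is, in fact, the classical proof: verifying the four axioms by precomposition for monotonicity, by Tietze--Dugundji extension plus odd symmetrisation and direct-summing for subadditivity, and by a compactness--continuity argument for the neighbourhood property. Steps (ii), (iii) and (iv) are correct as you have them, including the two bookkeeping points you flag at the end (the symmetrised extension restricts to $\varphi_i$ on the symmetric set $A_i$, and $\{x : \mathrm{dist}(x,A)\leq\epsilon\}$ is closed and symmetric because the distance function is continuous and $A=-A$).

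There is, however, a small but genuine slip in your treatment of property~(i). You claim that when $0\notin A$ one can ``exhibit an odd continuous map into $\R^k\setminus\{0\}$ for $k$ sufficiently large,'' and you present this as yielding $\gamma(A)\geq 1$. Both parts are off. First, exhibiting such a map would give an upper bound $\gamma(A)\leq k$, not a lower bound. Second, and more importantly, no such map need exist: the unit sphere of an infinite-dimensional space is closed, symmetric, and avoids the origin, yet has infinite genus. What you actually need is only the lower bound $\gamma(A)\geq 1$ for $A\neq\emptyset$, and this is immediate from the definition since $\R^0\setminus\{0\}=\emptyset$ admits no map from a non-empty set (equivalently, $\gamma(A)=0$ holds only by the explicit convention $\gamma(\emptyset)=0$). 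Replace the ``exhibit a map for $k$ large'' remark with this trivial observation and your proof is complete and correct; the case $0\in A$ giving $\gamma(A)=+\infty$ is fine as you wrote it, though it is not even needed for~(i).
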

\begin{proof}
See the proof of Proposition $2.1$ in \cite{Costa}.
\end{proof}
The next lemma gives a property of the Krasnoselskii-genus relevant for us to obtain our multiplicity result.
\begin{lemma}[\bf{Multiplicity from the Krasnoselskii-genus}]\label{infinitely many sols lemma}
Assume $A\in \mathcal{A}$ is such that $0\notin A$ and $\gamma(A)\geq2$. Then, $A$ has infinitely many points.
\end{lemma}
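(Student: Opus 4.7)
The plan is to argue by contradiction. Assume that $A$ is finite. Since $A\in\mathcal{A}$ means that $T(g)A=A$ for all $g\in \mathbb{Z}_2$, and the nontrivial action is $T(1)=-I$, we have $-A=A$, i.e., $A$ is symmetric with respect to the origin. Combined with the hypothesis $0\notin A$, this lets us enumerate the elements of $A$ as pairs
\begin{equation*}
A=\{\pm x_1,\pm x_2,\ldots,\pm x_n\},\qquad x_i\neq 0,\ \pm x_i\text{ all distinct}.
\end{equation*}

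Next, I would exhibit an odd continuous map $\varphi\colon A\to\mathbb{R}\setminus\{0\}$ witnessing $\gamma(A)\leq 1$. Simply set $\varphi(x_i)=1$ and $\varphi(-x_i)=-1$ for $i=1,\ldots,n$. This is well defined because the pairs $\{x_i,-x_i\}$ are disjoint (as $0\notin A$) and exhaust $A$; it is odd by construction; and it is continuous because $A$, being a finite subset of the Banach space $E$, is discrete in the subspace topology, so every function defined on $A$ is continuous. Moreover, $\varphi$ takes values in $\mathbb{R}\setminus\{0\}$.

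By the definition of the Krasnoselskii genus this shows $\gamma(A)\leq 1$, contradicting the standing assumption $\gamma(A)\geq 2$. Hence $A$ cannot be finite, and since $A$ is a subset of the metric space $E$, being infinite here is equivalent to having infinitely many points, which completes the proof. The argument has essentially no obstacles: the only subtle point is remembering that symmetry of $A$ follows automatically from invariance under the $\mathbb{Z}_2$-action, which is what makes the pairing, and hence the construction of $\varphi$, possible.
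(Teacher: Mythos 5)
Your proof is correct and is the standard argument; the paper does not give its own proof but simply refers to Proposition~2.2 in Costa's book, whose proof is precisely the contrapositive argument you give (a finite symmetric set avoiding the origin admits an odd continuous map into $\R\setminus\{0\}$, hence has genus at most $1$).
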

\begin{proof}
See the proof of Proposition $2.2$ in \cite{Costa}.
\end{proof}

For the proof of Theorem \ref{first multiplicity theorem}, we recall a classical result of Ambrosetti and Rabinowitz, \cite{Ambrosetti and Rabinowitz}.
\begin{theorem}[\cite{Ambrosetti and Rabinowitz}; \,\bf{Min-max setting high $q$}]\label{Ambro Rab Mult}
Let $I\in C^1(E(\R^N),\R^N)$ satisfy the following:
\begin{enumerate}[label=(\roman*)]
\item $I(0)=0$ and there exists constants $R,a>0$ such that $I(u)\geq a$ if $||u||_{E(\R^N)}=R$
\item If $(u_n)_{n\in\N}\subset E(\R^N)$ is such that $0<I(u_n)$, $I(u_n)$ bounded above, and $I'(u_n)\to 0$, then $(u_n)_{n\in\N}$ possesses a convergent subsequence
\item $I(u)=I(-u)$ for all $u\in E(\R^N)$
\item For a nested sequence $E_1\subset E_2\subset \cdots$ of finite dimensional subspaces of $E(\R^N)$ of increasing dimension, it holds that $E_i \cap \hat{A}_0$ is bounded for each $i=1,2,\ldots$, where $\hat{A}_0$ is given by \eqref{A hat}
\end{enumerate}
Define
\[b_m=\inf_{K\in\Gamma_m}\max_{u\in K} I(u),\]
with
\begin{align*}
\Gamma_m=&\{K\subset E(\R^N) : K \text{ is compact and symmetric with respect to the origin and for}\\
&\text{ all } h\in\Gamma^*, \text{ it holds that } \gamma(K\cap h(\partial B_1))\geq m\},
\end{align*} 
where $\Gamma^*$ is given by \eqref{gamma star}.
Then, for each $m\in\N$, it holds that $0<a \leq b_m \leq b_{m+1}$ and $b_m$ is a critical value of $I$. Moreover, if $b_{m+1}=\cdots =b_{m+r}=b$, then $\gamma(K_b)\geq r$,
where 
\[K_b \coloneqq\{u\in E(\R^N): I(u)=b, \, I'(u)=0\},\]
is the set of critical points at any level $b>0$.
\end{theorem}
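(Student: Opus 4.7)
This is the classical $\mathbb{Z}_2$-equivariant mountain-pass theorem of Ambrosetti--Rabinowitz, and I would follow their original minimax scheme built on the Krasnoselskii genus. The main tools are a $\mathbb{Z}_2$-equivariant deformation lemma (available on $E(\R^N)$ thanks to the separability of the space from Lemma \ref{separability}, which supports an odd locally Lipschitz pseudo-gradient), together with the intersection and continuity properties of the genus. Condition (ii) powers the deformation, (iii) makes it odd, (i) supplies the positive lower threshold $a$, and (iv) produces compact symmetric test sets that keep the levels $b_m$ finite.

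\textbf{Constructing and bounding the levels.} First I would check that each $\Gamma_m$ is nonempty. For an $m$-dimensional subspace $F_m \subset E(\R^N)$, the sphere $F_m \cap \partial B_R$ has genus $m$, and standard genus intersection arguments show that this sphere meets $h(\partial B_1)$ in a subset of genus $\geq m$ for every $h \in \Gamma^*$. Choosing $R$ large using (iv) so that $F_m \setminus B_R \subset \{I < 0\}$ carves out a compact symmetric $K \in \Gamma_m$ and bounds $b_m$ above by $\max_K I$. The monotonicity $b_m \leq b_{m+1}$ follows from $\Gamma_{m+1} \subset \Gamma_m$. For $b_m \geq a$, I fix any $K \in \Gamma_m$ and select $h \in \Gamma^*$ so that $h(\partial B_1) = \{ \norme{u}{\R^N} = R \}$: this map is obtained by radially inflating a small starlike neighbourhood of $0$ (contained in $\hat A_0$ by (i)) out to the mountain-pass sphere $\partial B_R$, which is possible because $I \geq 0$ on a neighbourhood of $0$ and $I \geq a$ on $\partial B_R$. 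Nonemptiness of $K \cap h(\partial B_1)$ then forces $\max_K I \geq a$.

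\textbf{Criticality and multiplicity.} The heart of the proof is that each $b_m$ is a critical value. If not, by (ii) no critical points exist in $\{ |I - b_m| \leq 2\varepsilon \}$ for some $\varepsilon > 0$, and an odd pseudo-gradient flow produces an odd homeomorphism $\eta : E(\R^N) \to E(\R^N)$ pushing $\{ I \leq b_m + \varepsilon \}$ into $\{ I \leq b_m - \varepsilon \}$ while fixing $\{ I \leq b_m - 2\varepsilon \}$. Choosing $K \in \Gamma_m$ with $\max_K I < b_m + \varepsilon$, I verify $\eta(K) \in \Gamma_m$: for any $h \in \Gamma^*$, the map $\eta^{-1} \circ h$ lies in $\Gamma^*$ as well, since $\eta^{-1}$ sends $\hat A_0$ into itself ($\eta$ decreases $I$, so $I \circ \eta^{-1} \geq I$), whence $\gamma(\eta(K) \cap h(\partial B_1)) = \gamma(K \cap (\eta^{-1} \circ h)(\partial B_1)) \geq m$. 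This yields $b_m \leq \max_{\eta(K)} I \leq b_m - \varepsilon$, a contradiction. For the multiplicity statement with $b_{m+1} = \cdots = b_{m+r} = b$, if $\gamma(K_b) < r$ then genus continuity gives a $\mathbb{Z}_2$-invariant open neighbourhood $N$ of $K_b$ with $\gamma(\overline{N}) < r$; a deformation performed on $\{ |I - b| \leq 2\varepsilon \} \setminus \overline{N}$ contradicts the definition of $b_{m+r}$, and the conclusion on infinitely many critical points follows via Lemma \ref{infinitely many sols lemma}.

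\textbf{Main obstacle.} The principal technical difficulty is the construction of the odd deformation $\eta$ directly in the Banach space $E(\R^N)$, where no Hilbert gradient is available. I would adapt Szulkin's construction (cited in the paper): using the separability of $E(\R^N)$ from Lemma \ref{separability}, assemble an odd locally Lipschitz pseudo-gradient by means of a $\mathbb{Z}_2$-equivariant partition of unity, then integrate with a cut-off to obtain a global odd homeomorphism whose inverse preserves $\hat A_0$. A secondary subtlety is the homeomorphism $h$ used in the lower bound $b_m \geq a$: one must balance $h(B_1) \subset \hat A_0$ against $h(\partial B_1) \subset \{ I \geq a \}$, which is viable precisely because $I$ is nonnegative on a small symmetric neighbourhood of $0$ and bounded below by $a$ on $\partial B_R$, leaving room for a radial interpolation.
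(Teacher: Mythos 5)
The paper does not prove this theorem: its ``proof'' is the one-line citation to \cite[Theorem 2.8]{Ambrosetti and Rabinowitz}. Your sketch faithfully reconstructs the classical Ambrosetti--Rabinowitz minimax argument (odd pseudo-gradient deformation via a $\mathbb{Z}_2$-equivariant partition of unity, genus intersection lemma, Lemma 1.44 of \cite{Ambrosetti and Rabinowitz}, genus continuity for the multiplicity claim), which is exactly what the paper is relying on, so in that sense your route matches the paper's. Two details in your write-up should be tightened. First, in verifying $\Gamma_m \neq \emptyset$, the object of genus $\geq m$ is not the intersection of the sphere $F_m \cap \partial B_R$ with $h(\partial B_1)$ (that sphere may miss $h(\partial B_1)$ entirely); it is $(F_m \cap \overline{B}_R)\cap h(\partial B_1)$, which has genus $\geq m$ by Borsuk's theorem applied to the bounded, symmetric, open neighbourhood $h(B_1)\cap F_m$ of $0$ in $F_m$ whose relative boundary is contained in $h(\partial B_1)$. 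Second, your choice of $h$ for the lower bound $b_m\geq a$ is a bit too quick: an odd homeomorphism $h$ of $E(\R^N)$ onto itself with $h(\partial B_1)=\partial B_R$ necessarily satisfies $h(B_1)=B_R$, so membership in $\Gamma^*$ requires $I\geq 0$ on all of $\overline{B}_R$, not merely on a small starlike neighbourhood of the origin; the ``radial inflation'' phrasing does not avoid this. That hypothesis is part of the precise Ambrosetti--Rabinowitz condition $(I_2')$ and holds in the application (Lemma \ref{nonzeroweaklimitlemma} gives $I_\lambda\geq 0$ on the whole mountain-pass ball), but it is not literally contained in condition (i) as written, so you should make it explicit rather than attribute it to (i). With those corrections, your sketch is a correct account of the cited result.
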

\begin{proof}
See \cite[Theorem $2.8$]{Ambrosetti and Rabinowitz}.
\end{proof}
We are now in position to prove Theorem \ref{first multiplicity theorem}.
\begin{proof} [Proof of Theorem \ref{first multiplicity theorem}]
We aim to apply Theorem \ref{Ambro Rab Mult} and therefore must verify that $I$ satisfies assumptions $(i)$-$(iv)$ of this theorem. By Lemma \ref{mpgfori}, $I$ satisfies the Mountain-Pass Geometry and thus $(i)$ holds. By Lemma \ref{pscconditionfori}, $(ii)$ holds. Clearly, $(iii)$ holds due to the structure of the functional $I$. We now must show that $(iv)$ holds. We first notice by straightforward calculations that for any $u\in \partial B_1$ and any for $t>0$, it holds that
\begin{align*}
I(tu)&=\frac{t^2}{2}||u||^2_{H^1(\R^N)}+\frac{t^4}{4}\int_{\R^N}\rho\phi_u u^2-\frac{t^{q+1}}{q+1}\int_{\R^N}|u|^{q+1}\\
&=\frac{t^2}{2}\left(||u||^2_{H^1(\R^N)}+\frac{t^2}{2}\int_{\R^N}\rho\phi_u u^2-\frac{2t^{q-1}}{q+1}\int_{\R^N}|u|^{q+1}\right).
\end{align*}
We now set
\[\alpha\coloneqq||u||^2_{H^1(\R^N)}>0, \quad \beta \coloneqq \frac{1}{2} \int_{\R^N}\rho\phi_u u^2\geq0, \quad \gamma \coloneqq \frac{2}{q+1}\int_{\R^N}|u|^{q+1}>0,\]
and look for positive solutions of 
\[\frac{t^2}{2}(\alpha+\beta t^2-\gamma t^{q-1})=0.\]
Since $q>3$, it holds that $\alpha+\beta t^2-\gamma t^{q-1}=0$ has a unique solution $t=t_u>0$. That is, we have shown that for each $u\in \partial B_1$, there exists a unique $t=t_u>0$ such that $I$ satisfies
\begin{align*}
&I (t_uu)=0 \\
&I (tu)>0, \,\, \forall t<t_u \\
&I (tu)<0, \,\, \forall t>t_u.
\end{align*}
Now, for any $m\in\N$, we choose $E_m$ a $m$-dimensional subspace of $E(\R^N)$ in such a way that $E_m\subset E_{m'}$ for $m<m'$. Moreover, for any $m\in \N$, we set 
\[W_m\coloneqq \{w\in E(\R^N) : v=tu,\,\, t\geq 0,\,\, u\in \partial B_1 \cap E_m\}.\]
Then, the function $h:E_m \to W_m$ given by
\[h(z)= t \frac{z}{||z||}, \quad \text{with } t=||z||\]
defines a homeomorphism from $E_m$ onto $W_m$, and so $W_1\subset W_2\subset \cdots$ is a nested sequence of finite dimensional subspaces of $E(\R^N)$ of increasing dimension. We also notice that
\[T_m\coloneqq \sup_{u\in \partial B_1 \cap E_m} t_u <+\infty\]\\
since $\partial B_1 \cap E_m$ is compact. So, for all $t>T_m$ and $u\in\partial B_1 \cap E_m$, it holds that $I (tu)<0$, and thus $W_m \cap \hat{A}_0$ is bounded, where $\hat{A}_0$ is given by \eqref{A hat}. Since this holds for arbitrary $m\in\N$, we have shown that $(iv)$ holds. Hence, we have shown that Theorem \ref{Ambro Rab Mult} applies to the functional $I$. If $b_m$ are distinct for $m=1,\dots, j$ with $j\in \N$, we obtain $j$ distinct pairs of critical points corresponding to critical levels $0<b_1<b_2<\cdots<b_j$. If $b_{m+1}=\cdots =b_{m+r}=b$, then $\gamma(K_b)\geq r\geq 2$. Moreover, $0\notin K_b$ since $b>0=I(0)$. Further, $K_b$ is invariant since $I$ is an invariant functional and $K_b$ is closed since $I$ satisfies the Palais-Smale condition, and so $K_b\in \mathcal{A}$. Therefore, by Lemma \ref{infinitely many sols lemma}, $K_b$ possesses infinitely many points. Finally, we note that by \cite[Theorem $2.13$]{Ambrosetti and Rabinowitz}, for each $m\in\N$, it holds that 
\[d_m\leq b_m,\]
where $d_m$ is defined in \eqref{def of dm}. It therefore follows from Lemma \ref{divergence of critical levels} that 
\[b_m\to+\infty, \text{ as } m\to+\infty.\]
This concludes the proof.
\end{proof}

\subsection{Proof of Theorem \ref{partial multiplicity result low q}}\label{proof of multiplicity theorem low q section}

Before proving Theorem \ref{partial multiplicity result low q}, we must establish some preliminary results that we will need to use. The first lemma that we recall will give us an abstract definition of the min-max levels and some properties.

\begin{lemma}[\cite{Ambrosetti and Ruiz};\, \bf{Abstract min-max setting for low $q$}]\label{Ambro Ruiz critical levels}
Consider a Banach space $E$, and a functional $\Phi_{\mu}:E\to \R$ of the form $\Phi_{\mu}(u)=\alpha(u)-\mu \beta(u)$, with $\mu>0$. Suppose that $\alpha$, $\beta \in C^1$ are even functions, $\lim_{||u||\to+\infty} \alpha(u)=+\infty$, $\beta(u)\geq 0$, and $\beta$, $\beta'$ map bounded sets onto bounded sets. Suppose further that there exists $K\subset E$ and a class $\mathcal{F}$ of compact sets in $E$ such that: \\

\noindent ($\mathcal{F}.1$) $K\subset A$ for all $A\in \mathcal{F}$ and $\sup_{u\in K} \Phi_{\mu}(u)<c_{\mu}$, where $c_{\mu}$ is defined as:
\begin{equation}\label{def of c mu}
c_{\mu}\coloneqq \inf_{A\in\mathcal{F}}\max_{u\in A}\Phi_{\mu}(u).
\end{equation}
 ($\mathcal{F}.2$) If $\eta\in C([0,1] \times E, E)$ is an odd homotopy such that 
\begin{itemize}
\item $\eta(0, \cdot)=I$, where $I:E\to E$ is the identity map on $E$ 
\item$\eta(t,\cdot)$ is a homeomorphism 
\item $\eta(t,x)=x$ for all $x\in K$,
\end{itemize}
then $\eta(1,A)\in\mathcal{F}$ for all $A\in\mathcal{F}$.\\

\noindent Then, it holds that the mapping $\mu \mapsto c_{\mu}$ is non-increasing and left-continuous, and therefore is almost everywhere differentiable.
\end{lemma}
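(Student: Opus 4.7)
The plan is to establish the three claims in order: monotonicity, left-continuity, and a.e. differentiability. Throughout, the key structural observations are that $\Phi_{\mu_1}(u) - \Phi_{\mu_2}(u) = (\mu_2-\mu_1)\beta(u) \geq 0$ whenever $\mu_1 \leq \mu_2$, and that every admissible $A \in \mathcal{F}$ is compact by assumption, so that $\sup_{u \in A}\beta(u) < +\infty$ because $\beta$ maps bounded sets to bounded sets.

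First I would deal with monotonicity, which is essentially immediate. If $\mu_1 < \mu_2$, then $\Phi_{\mu_2} \leq \Phi_{\mu_1}$ pointwise on $E$. Hence for every $A \in \mathcal{F}$ we have $\max_{u \in A}\Phi_{\mu_2}(u) \leq \max_{u \in A}\Phi_{\mu_1}(u)$, and taking the infimum over $A \in \mathcal{F}$ yields $c_{\mu_2} \leq c_{\mu_1}$, so $\mu \mapsto c_\mu$ is non-increasing.

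Next I would establish left-continuity at an arbitrary $\mu_0$. Pick a sequence $\mu_n \nearrow \mu_0$. Monotonicity already gives $c_{\mu_n} \geq c_{\mu_0}$, so it suffices to show $\limsup_{n\to\infty} c_{\mu_n} \leq c_{\mu_0}$. Fix $\varepsilon > 0$ and, by definition of $c_{\mu_0}$, choose $A_\varepsilon \in \mathcal{F}$ with
\begin{equation*}
\max_{u \in A_\varepsilon} \Phi_{\mu_0}(u) \leq c_{\mu_0} + \varepsilon.
\end{equation*}
Since $A_\varepsilon$ is compact in $E$, it is bounded, and the hypothesis that $\beta$ sends bounded sets to bounded sets gives a finite constant $C_\varepsilon := \sup_{u \in A_\varepsilon}\beta(u) < +\infty$. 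For every $n$,
\begin{equation*}
c_{\mu_n} \leq \max_{u \in A_\varepsilon}\Phi_{\mu_n}(u) = \max_{u \in A_\varepsilon}\bigl[\Phi_{\mu_0}(u) + (\mu_0 - \mu_n)\beta(u)\bigr] \leq c_{\mu_0} + \varepsilon + (\mu_0 - \mu_n)\,C_\varepsilon.
\end{equation*}
Letting $n \to \infty$ gives $\limsup_n c_{\mu_n} \leq c_{\mu_0} + \varepsilon$, and since $\varepsilon > 0$ was arbitrary we conclude $c_{\mu_n} \to c_{\mu_0}$.

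Finally, a.e.\ differentiability of $\mu \mapsto c_\mu$ follows directly from Lebesgue's classical theorem that any monotone function on an interval is differentiable almost everywhere. The one point of genuine content in the whole argument, and the step I would flag as the main obstacle, is extracting a uniform bound $\sup_{A_\varepsilon}\beta < +\infty$ for the almost-optimal set $A_\varepsilon$; this relies essentially on the compactness built into the class $\mathcal{F}$ together with the standing hypothesis that $\beta$ is bounded on bounded sets. No use of the stability property $(\mathcal{F}.2)$ is needed here: that property is relevant to showing that $c_\mu$ is a critical value rather than merely monotone in $\mu$.
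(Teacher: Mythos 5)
The paper itself does not reprove this lemma; the proof is simply ``See \cite[Lemma 2.2]{Ambrosetti and Ruiz}.'' Your proposal supplies a complete, self-contained argument, and it is correct. The monotonicity step is immediate from $\beta \geq 0$ as you say; the left-continuity step is the standard $\varepsilon$-argument on an almost-optimal $A_\varepsilon$, and the key estimate
\[
c_{\mu_n} \leq \max_{A_\varepsilon}\Phi_{\mu_0} + (\mu_0-\mu_n)\sup_{A_\varepsilon}\beta
\]
is valid because every $A\in\mathcal F$ is compact, hence bounded, so $\sup_{A_\varepsilon}\beta<\infty$ (indeed this already follows from continuity of $\beta$ on the compact set $A_\varepsilon$, so the ``bounded sets to bounded sets'' hypothesis, while harmless, is not strictly needed here); a.e.\ differentiability is Lebesgue's theorem on monotone functions. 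Your closing remark is also accurate: $(\mathcal F.2)$ plays no role in this statement — it is the stability property under which the deformation machinery in \cite{Ambrosetti and Ruiz} (and later in Lemma \ref{Ambro Ruiz bounded PS}) produces bounded Palais--Smale sequences at level $c_\mu$, not in proving monotonicity or continuity of the map $\mu\mapsto c_\mu$. One small presentational point: you could strengthen ``almost everywhere differentiable, hence left-continuous'' into a single line by observing that any non-increasing real-valued function has one-sided limits everywhere and is continuous off a countable set, but your argument as written is cleaner since it actually proves left-continuity \emph{everywhere}, which is the form used later in the proof of Theorem \ref{mountainpasssoltheoremlowp} via $c_{\mu_n,\lambda}\to c_{1,\lambda}$ as $\mu_n\nearrow 1$.
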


\begin{proof}
See \cite[Lemma $2.2$]{Ambrosetti and Ruiz}.
\end{proof}
Under the hypotheses of the previous lemma, we can now define the set of values of $\mu\in\left[\frac{1}{2},1\right]$ such that $c_{\mu}$, given by \eqref{def of c mu}, is differentiable. Namely, we define
\begin{equation*}\label{set of mu}
\begin{split}
\mathcal{J}\coloneqq \bigg\{\mu \in \left[\frac{1}{2},1\right] \, :\, \text{the mapping } \mu \mapsto c_{\mu} \text{ is differentiable}\bigg\}.
\end{split}
\end{equation*}

\begin{corollary}[\bf{On density of perturbation values $\mu$}]\label{density corollary}

The set $\mathcal{J}$ is dense in $\left[\frac{1}{2},1\right]$.
\end{corollary}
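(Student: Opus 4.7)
The plan is to deduce the density of $\mathcal{J}$ directly from the previous Lemma. By Lemma \ref{Ambro Ruiz critical levels}, the mapping $\mu \mapsto c_\mu$ is non-increasing (and left-continuous) on $[\tfrac{1}{2},1]$. By the classical Lebesgue theorem on the differentiability of monotone functions, any monotone real-valued function defined on an interval of $\R$ is differentiable almost everywhere with respect to the Lebesgue measure. Applying this to $\mu \mapsto c_\mu$ yields that the complement of $\mathcal{J}$ in $[\tfrac{1}{2},1]$ has Lebesgue measure zero.

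Since a set of full Lebesgue measure inside an interval is necessarily dense (its complement, having measure zero, cannot contain any nondegenerate subinterval), the conclusion follows. In short, for every $\mu_0 \in [\tfrac{1}{2},1]$ and every $\varepsilon > 0$, the interval $(\mu_0 - \varepsilon, \mu_0 + \varepsilon) \cap [\tfrac{1}{2},1]$ has positive measure, hence cannot be entirely contained in the null set $[\tfrac{1}{2},1] \setminus \mathcal{J}$, so it must intersect $\mathcal{J}$.

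There is essentially no obstacle here: the entire content is contained in Lemma \ref{Ambro Ruiz critical levels}, and the corollary is merely the standard reformulation ``almost everywhere differentiable implies the differentiability points are dense''. The only subtle point, should one wish to be fully rigorous, is the invocation of Lebesgue's theorem on monotone functions, which applies without any additional continuity hypothesis, so the non-increasing property alone suffices. The left-continuity already established in Lemma \ref{Ambro Ruiz critical levels} is not needed for this corollary, but is of course consistent with it.
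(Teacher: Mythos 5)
Your proof is correct and follows essentially the same route as the paper: invoke the almost-everywhere differentiability of the monotone map $\mu\mapsto c_\mu$ (which the paper takes directly from the conclusion of Lemma \ref{Ambro Ruiz critical levels}, and which you justify explicitly via Lebesgue's theorem on monotone functions), and then conclude density from the fact that a full-measure subset of an interval must meet every nondegenerate subinterval. Your remark that left-continuity is not actually needed for this particular corollary is a fair and accurate observation.
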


\begin{proof}
Fix $x\in\left[\frac{1}{2},1\right]$ and $\delta>0,$ and denote by $\abs{\cdot}$ the Lebesgue measure. Since  $\left[\frac{1}{2},1\right] \setminus \mathcal{J}$ has zero Lebesgue measure by Lemma \ref{Ambro Ruiz critical levels}, we have
\begin{align*}
\left|\mathcal{J} \cap (x-\delta,x+\delta)\right| 
&=\left|\left[\frac{1}{2},1\right] \cap (x-\delta,x+\delta)\right|>0.
\end{align*}
It follows that $\mathcal{J} \cap (x-\delta,x+\delta)$ is nonempty and so we can choose $y\in\mathcal{J} \cap(x-\delta,x+\delta)$. Since $x$ and $\delta$ are arbitrary, this completes the proof.
\end{proof}
With the definition of $\mathcal{J}$ in place, we can also recall another vital result from \cite{Ambrosetti and Ruiz}, which will be used to obtain the boundedness of our Palais-Smale sequences.
\begin{lemma}[\cite{Ambrosetti and Ruiz};\, \bf{Boundedness of Palais-Smale sequences at level $c_{\mu}$}]\label{Ambro Ruiz bounded PS}
For any $\mu\in \mathcal{J}$, there exists a bounded Palais-Smale sequence for $\Phi_{\mu}$ at the level $c_{\mu}$ defined by \eqref{def of c mu}. That is, there exists a bounded sequence $(u_n)_{n\in\N}\subset E(\R^N)$ such that $\Phi_{\mu}(u_n)\to c_{\mu}$ and $\Phi_{\mu}'(u_n)\to 0$.
\end{lemma}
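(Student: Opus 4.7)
The plan is to implement the monotonicity trick in the spirit of Struwe and Jeanjean--Tanaka, exploiting that differentiability of $\mu\mapsto c_\mu$ at the chosen $\mu$ forces any almost-optimal path/set to be controlled by $\beta$, and hence by $\alpha$. Fix $\mu\in\mathcal{J}$ and let $c'_\mu$ denote the (finite, non-positive) derivative at $\mu$. Choose an auxiliary sequence $\mu_n\nearrow \mu$ with $\mu_n\in[\tfrac12,1]$; by differentiability, $(c_\mu-c_{\mu_n})/(\mu-\mu_n)\to -c'_\mu$. From the definition \eqref{def of c mu} of $c_{\mu_n}$, select $A_n\in\mathcal{F}$ such that
\begin{equation*}
\max_{u\in A_n}\Phi_{\mu_n}(u)\leq c_{\mu_n}+(\mu-\mu_n),
\end{equation*}
and then pick $u_n\in A_n$ with $\Phi_\mu(u_n)\geq \max_{u\in A_n}\Phi_\mu(u)-(\mu-\mu_n)\geq c_\mu-(\mu-\mu_n)$, where the last inequality uses $A_n\in\mathcal{F}$ together with \eqref{def of c mu}.

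Next I extract the a priori bound. From $\Phi_{\mu_n}(u_n)-\Phi_\mu(u_n)=(\mu-\mu_n)\beta(u_n)$ one obtains
\begin{equation*}
\beta(u_n)=\frac{\Phi_{\mu_n}(u_n)-\Phi_\mu(u_n)}{\mu-\mu_n}\leq 2+\frac{c_{\mu_n}-c_\mu}{\mu-\mu_n}\xrightarrow[n\to\infty]{} 2+c'_\mu,
\end{equation*}
so $\beta(u_n)$ is bounded. Since $\Phi_\mu(u_n)$ is bounded (between $c_\mu-(\mu-\mu_n)$ and $c_{\mu_n}+(\mu-\mu_n)+(\mu-\mu_n)\beta(u_n)$), the identity $\alpha(u_n)=\Phi_\mu(u_n)+\mu\beta(u_n)$ shows $\alpha(u_n)$ is bounded. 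Because $\alpha(u)\to+\infty$ as $\|u\|\to\infty$, the sequence $(u_n)$ is bounded in $E$, and by construction $\Phi_\mu(u_n)\to c_\mu$.

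Finally I promote $(u_n)$ to a Palais--Smale sequence by a deformation argument. Assume for contradiction that $\Phi'_\mu(u_n)\not\to 0$; after passing to a subsequence there is $\delta>0$ with $\|\Phi'_\mu(u_n)\|\geq \delta$, and, by continuity of $\Phi'_\mu$, a uniform $\delta/2$ lower bound on a bounded tubular neighbourhood $\mathcal U$ of the $u_n$. Using $\sup_K\Phi_\mu<c_\mu$ (property $(\mathcal{F}.1)$) together with the fact that $\beta,\beta'$ map bounded sets to bounded sets, I construct in the standard way an odd, locally Lipschitz pseudo-gradient vector field for $\Phi_\mu$, cut off to vanish outside $\mathcal U$ and on a neighbourhood of $K$, and also outside the slab $\{c_\mu-\varepsilon\le \Phi_\mu\le c_\mu+\varepsilon\}$ for $\varepsilon$ small. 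Its flow $\eta(t,\cdot)$ is an odd homotopy satisfying all three bullets of $(\mathcal{F}.2)$, hence $\eta(1,A_n)\in\mathcal{F}$. Standard estimates along the flow produce $\max_{\eta(1,A_n)}\Phi_\mu\leq c_\mu-\tfrac{\varepsilon}{2}$ for large $n$, contradicting the definition of $c_\mu$. Therefore $\Phi'_\mu(u_n)\to 0$ along a subsequence, proving the lemma.

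The main obstacle is the last step: one must arrange that the deforming vector field simultaneously (i) is odd and equivariant, (ii) vanishes on $K$ (so that $\eta(t,\cdot)|_K=\mathrm{id}$ and $(\mathcal{F}.2)$ applies), and (iii) drops the functional by a definite amount on the near-maximum subset of $A_n$. The ingredient that makes this possible is precisely the strict inequality $\sup_K\Phi_\mu<c_\mu$ in $(\mathcal{F}.1)$, which allows the cut-off near $K$ to be inserted without interfering with the descent in the critical slab; together with the boundedness of $(u_n)$ and the mapping properties of $\beta,\beta'$, this yields a globally Lipschitz pseudo-gradient whose flow is the desired odd homotopy.
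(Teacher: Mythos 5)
Your proof is a reconstruction of the standard monotonicity trick of Struwe and Jeanjean--Tanaka, which is precisely what underlies the cited Proposition~$2.3$ of Ambrosetti--Ruiz; the paper itself only cites that result and gives no argument, so there is no alternative route to compare against. The a priori bound part of your argument is essentially right, modulo a sign slip: since $\mu\mapsto c_\mu$ is non-increasing and $\mu_n\nearrow\mu$, the difference quotient $(c_\mu-c_{\mu_n})/(\mu-\mu_n)$ tends to $c'_\mu\leq 0$ (not $-c'_\mu$), so the estimate should read $\beta(u_n)\leq 2-c'_\mu$. That is harmless. Do note, though, that the same computation bounds $\alpha(u)$ uniformly for \emph{every} $u\in A_n$ with $\Phi_\mu(u)\geq c_\mu - (\mu - \mu_n)$, not only your single chosen $u_n$, and you will need this stronger version in the next step.

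The genuine gap is in the deformation step. Having extracted a subsequence with $\|\Phi'_\mu(u_n)\|\geq\delta$, you invoke ``continuity of $\Phi'_\mu$'' to get a uniform $\delta/2$ lower bound on a bounded tubular neighbourhood $\mathcal{U}$ of the $u_n$. Continuity of $\Phi'_\mu$ does not give a uniform radius for such a neighbourhood; and, more to the point, even if it did, lowering $\Phi_\mu$ only inside $\mathcal{U}$ does not control $\max_{\eta(1,A_n)}\Phi_\mu$, because the set of near-maximizers of $\Phi_\mu$ on $A_n$ may well lie outside $\mathcal{U}$. The correct set-up negates the lemma's conclusion globally: assume there is no bounded Palais--Smale sequence for $\Phi_\mu$ at level $c_\mu$ with norm at most $M+1$, where $M$ is the a priori bound valid on the whole near-maximal slice of $A_n$. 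Then there exist $\varepsilon,\delta>0$ such that $\|\Phi'_\mu(u)\|\geq\delta$ whenever $|\Phi_\mu(u)-c_\mu|\leq 2\varepsilon$ and $\|u\|\leq M+1$; it is this uniform lower bound on a slab, not a tubular one, that the cut-off odd pseudo-gradient flow (compatible with $(\mathcal{F}.2)$, thanks to $\sup_K\Phi_\mu<c_\mu$) actually needs in order to push $\max_{A_n}\Phi_\mu$ strictly below $c_\mu$ and contradict~\eqref{def of c mu}. Accordingly, the conclusion should also be recast: the contradiction yields the existence of \emph{some} bounded Palais--Smale sequence at level $c_\mu$ -- which is exactly what the lemma asserts -- rather than that your specific $(u_n)$, or a subsequence of it, is one.
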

\begin{proof}
See \cite[Proposition $2.3$]{Ambrosetti and Ruiz}.
\end{proof}
Moving toward a less abstract setting, for any $\mu\in\left[\frac{1}{2},1\right]$, we define the perturbed functional $I_{\mu}:E(\R^N)\to\R^N$ as 
\begin{equation}\label{def of I mu}
	I_{\mu}(u) \coloneqq \frac{1}{2}\int_{\R^N}(|\nabla u|^2 + u^2)+\frac{1}{4}\int_{\R^N}\int_{\R^N}\frac{u^2(x)\rho (x) u^2(y) \rho (y)}{|x-y|^{N-2}}\dif x\dif y -\frac{\mu}{q+1}\int_{\R^N}|u|^{q+1}.
\end{equation}
The next result that we will need in order to prove Theorem \ref{partial multiplicity result low q}, follows as a result of Lemma \ref{derivatives of f}.

\begin{lemma}[\bf{On the sign of the energy level of $I_{\mu}$ along certain curves}]\label{use of curves}
Assume $N= 3,4,5$ and $q\in(2,\tstar-1]$. Suppose further that $\rho$ is homogeneous of degree $\bar{k}$, namely, $ \rho(tx)=t^{\bar{k}}\rho(x)$ for all $t>0$, for some 
$$\bar{k}>\max\left\{\frac{N}{4}, \frac{1}{q-1}\right\}\cdot(3-q)-1.$$
Then, there exists $\nu>\max\left\{\frac{N}{2}, \frac{2}{q-1}\right\}$ such that for each fixed $\mu\in\left[\frac{1}{2},1\right]$ and each $u\in E(\R^N)\setminus\{0\}$, there exists a unique $t=t_u>0$ with the property that
\begin{align*}
&I_{\mu} (t_u^{\nu} u({t_u}\cdot))=0, \nonumber \\
&I_{\mu} (t^{\nu} u(t\cdot))>0, \,\, \forall t<t_u, \nonumber \\
&I_{\mu} (t^{\nu} u(t\cdot))<0, \,\, \forall t>t_u,
\end{align*}
where $I_{\mu}$ is defined in \eqref{def of I mu}.
\end{lemma}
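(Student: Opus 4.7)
The idea is to reduce the statement to a direct application of Lemma~\ref{derivatives of f}, after computing $I_{\mu}(t^{\nu}u(t\cdot))$ explicitly using the homogeneity of $\rho$. First I would perform the change of variables $x' = tx$ in each of the four terms defining $I_{\mu}$. Using $\rho(x/t) = t^{-\bar{k}}\rho(x)$, the kinetic, mass and $L^{q+1}$ terms scale as $t^{2\nu+2-N}$, $t^{2\nu-N}$ and $t^{\nu(q+1)-N}$ respectively, while the nonlocal double integral picks up an extra $t^{-2\bar k - (N-2) - 2N + 4\nu} = t^{4\nu - N - 2 - 2\bar{k}}$, leading to
\begin{equation*}
g_{\mu}(t) := I_{\mu}(t^{\nu} u(t\cdot)) = at^{2\nu+2-N} + b t^{2\nu-N} + c\,t^{4\nu-N-2-2\bar{k}} - \mu d\, t^{\nu(q+1)-N},
\end{equation*}
with $a,b,d>0$, $c \geq 0$ defined as in the proof of Proposition~\ref{variational characterisation MP level low q}.

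Next I would invoke Remark~\ref{k bound}: the hypothesis $\bar{k} > \max\{N/4, 1/(q-1)\}\cdot(3-q) - 1$ is precisely what guarantees the existence of $\nu > \max\{N/2, 2/(q-1)\}$ with
\begin{equation*}
\bar{k} \in \left(\tfrac{\nu(3-q) - 2}{2},\ \tfrac{4\nu - N - 2}{2}\right).
\end{equation*}
I fix such a $\nu$ (independent of $\mu$, $u$). Since $\mu d > 0$, Lemma~\ref{derivatives of f} applied to $g_{\mu}$ yields that $g_{\mu}$ admits a unique critical point $t_{\max} > 0$, which is its global maximum.

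It remains to translate the shape of $g_{\mu}$ into the required sign pattern. The condition $\nu > N/2$ makes the exponents $2\nu+2-N$ and $2\nu-N$ positive; the upper bound $\bar{k} < (4\nu-N-2)/2$ makes $4\nu-N-2-2\bar{k}>0$; and $\nu(q+1)-N > 0$ follows from $\nu>N/2$ and $q>1$. Thus $g_{\mu}(0)=0$. On the other hand, the lower bound $\bar{k}>(\nu(3-q)-2)/2$ together with $\nu>2/(q-1)$ ensures that $\nu(q+1)-N$ is strictly the largest of the four exponents, so that $g_{\mu}(t)\to -\infty$ as $t\to +\infty$. Combining these three facts with the uniqueness of the critical point, $g_{\mu}$ must be strictly increasing on $(0, t_{\max}]$ (with $g_{\mu}(t_{\max})>0$) and strictly decreasing on $[t_{\max}, +\infty)$. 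Therefore there is exactly one $t_u \in (t_{\max},+\infty)$ with $g_{\mu}(t_u)=0$, and the sign of $g_{\mu}$ is positive on $(0,t_u)$ and negative on $(t_u,+\infty)$, which is the conclusion.

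The only delicate point is checking that all exponent inequalities simultaneously hold under the stated bounds on $\bar{k}$ and $\nu$; this is essentially already done in Remark~\ref{k bound}, and the argument is robust with respect to $\mu \in [1/2, 1]$ because $\mu d > 0$ regardless of $\mu$, so the same $\nu$ works uniformly. I expect no genuine obstacle beyond this bookkeeping.
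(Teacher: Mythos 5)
Your proposal is correct and follows essentially the same approach as the paper: compute the scaling of $I_\mu(t^\nu u(t\cdot))$ via the homogeneity of $\rho$, choose $\nu$ via the observation in Remark~\ref{k bound} so that $\bar{k}\in\left(\frac{\nu(3-q)-2}{2},\frac{4\nu-N-2}{2}\right)$, apply Lemma~\ref{derivatives of f} to obtain a unique critical point which is a maximum, and read off the sign pattern from $g_\mu(0)=0$, positivity of $g_\mu$ for small $t>0$, and $g_\mu(t)\to-\infty$ as $t\to+\infty$. There is only a small typo in your intermediate scaling computation—the denominator $|x-y|^{N-2}$ contributes a factor $t^{+(N-2)}$, not $t^{-(N-2)}$—but your final exponent $4\nu-N-2-2\bar{k}$ is correct, so the argument is unaffected.
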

\begin{proof}

We first note that under the assumptions on the parameters, we can show that
\[\frac{4\nu-N-2}{2}>\frac{(\nu+1)(3-q)-2}{2}.\]
It follows from this and the lower bound assumption on $\bar{k}$ that we can always find at least one interval
\[\left(\frac{\nu(3-q)-2}{2},\frac{4\nu-N-2}{2}\right), \quad \text{with } \nu>\max\left\{\frac{N}{2},\frac{2}{q-1}\right\},\]
that contains $\bar{k}$. We pick $\nu$ corresponding to such an interval and fix $\mu\in\left[\frac{1}{2},1\right]$. Then, for any $u\in E(\R^N)\setminus\{0\}$ and for any $t>0$,  
using the assumption that $\rho$ is homogeneous of degree $\bar{k}$, we find that
\begin{align*}
I_{\mu} (t^{\nu} u(t\cdot))&=\frac{t^{2\nu+2-N}}{2}\int_{\R^N}|\nabla u|^2 + \frac{t^{2\nu-N}}{2}\int_{\R^N}u^2+\frac{t^{4\nu-N-2}}{4}\int_{\R^N} \int_{\R^N} \frac{u^2(y)\rho(\frac{y}{t})u^2(x)\rho(\frac{x}{t})}{\omega |x-y|^{N-2}}\\
&\qquad-\frac{\mu t^{\nu(q+1)-N}}{q+1}\int_{\R^N}|u|^{q+1}\\
&=\frac{t^{2\nu+2-N}}{2}\int_{\R^N}|\nabla u|^2 + \frac{t^{2\nu-N}}{2}\int_{\R^N}u^2+\frac{t^{4\nu-N-2-2\bar{k}}}{4}\int_{\R^N} \rho\phi_u u^2 -\frac{\mu t^{\nu(q+1)-N}}{q+1}\int_{\R^N}|u|^{q+1}.
\end{align*}
We therefore set
\[a=\frac{1}{2} \int_{\R^N}|\nabla u|^2, \,\, b=\frac{1}{2} \int_{\R^N} u^2,\,\, c=\frac{1}{4} \int_{\R^N}\rho\phi_u u^2, \,\, d=\frac{\mu}{q+1} \int_{\R^N}|u|^{q+1},\]
and consider the polynomial
\begin{equation*}
    f(t)= a{t^{2\nu+2-N}}+bt^{2\nu-N}+c t^{4\nu-N-2-2\bar{k}}-d t^{\nu(q+1)-N}, \quad t\geq0.
    \end{equation*}
Since $u\in E(\R^N)\setminus\{0\}$, we can deduce that $a,b,d>0$ and $c\geq0$, and so, by Lemma \ref{derivatives of f}, it holds that $f$ has a unique critical point corresponding to its maximum. Thus, since $I_{\mu} (t^{\nu} u(t\cdot))=f(t)$ and, by assumptions, $\nu(q+1)-N>2\nu+2-N$ and $\nu(q+1)-N>4\nu-N-2-2\bar{k}$, it follows that there exists a unique $t=t_u>0$ such that the conclusion holds.

\end{proof}

With the previous results established, we are finally in position to prove Theorem \ref{partial multiplicity result low q}.

\begin{proof} [Proof of Theorem \ref{partial multiplicity result low q}]

We first note that by Lemma \ref{use of curves}, we can choose $\nu>\max\left\{\frac{N}{2},\frac{2}{q-1}\right\}$, so that for each $u\in\partial B_1$, there exists a unique $t=t_u>0$ such that $I_{\mu}$ with $\mu=\frac{1}{2}$, defined by \eqref{def of I mu}, satisfies
\begin{align}\label{sign of mathcal I}
&I_{\frac{1}{2}} (t_u^{\nu} u({t_u}\cdot))=0, \nonumber \\
&I_{\frac{1}{2}} (t^{\nu} u(t\cdot))>0, \,\, \forall t<t_u, \nonumber \\
&I_{\frac{1}{2}} (t^{\nu} u(t\cdot))<0, \,\, \forall t>t_u.
\end{align}
Now, for any $m\in\N$, we choose $E_m$ a $m$-dimensional subspace of $E(\R^N)$ in such a way that $E_m\subset E_{m'}$ for $m<m'$. Moreover, for any $m\in \N$, we set 
\[W_m\coloneqq \{w\in E(\R^N) : w= t^{\nu}u(t\cdot),\,\, t\geq 0,\,\, u\in \partial B_1 \cap E_m\}.\]
Then, the function $h:E_m \to W_m$ given by
\[h(e)= t^{\nu}u(t\cdot), \quad \text{with } t=||e||_{E(\R^N)}, \,u=\frac{e}{||e||_{E(\R^N)}},\]
defines an odd homeomorphism from $E_m$ onto $W_m$. We notice that it holds that
\begin{equation}\label{def of Tm}
T_m\coloneqq \sup_{u\in \partial B_1 \cap E_m} t_u <+\infty,
\end{equation}
since $\partial B_1 \cap E_m$ is compact. So, the set
\[A_m= \{w\in E(\R^N) : w=t^{\nu} u(t\cdot),\,\, t\in[0,T_m],\,\, u\in \partial B_1 \cap E_m\}\]
is compact. We now define
\[H\coloneqq\{g:E(\R^N)\to E(\R^N) : g \text{ is an odd homeomorphism and } g(w)=w \, \text{ for all } w\in\partial A_m\},\]
and
\[G_m\coloneqq \{g(A_m): g\in H\}.\]
We aim to verify ($\mathcal{F}.1$) and ($\mathcal{F}.2$) of Lemma \ref{Ambro Ruiz critical levels}. We take $G_m$ as the class $\mathcal{F}$ and $K=\partial A_m$ and define the min-max levels
\[c_{m,\mu}\coloneqq \inf_{A\in G_m}\max_{u\in A} I_{\mu}(u).\]
Then, since $T_m\geq t_u$ for all $u\in \partial B_1 \cap E_m$ by definition, it follows from \eqref{sign of mathcal I} that
\[I_{\mu}(w)\leq I_{\frac{1}{2}} (w)\leq 0, \quad \forall w\in \partial A_m, \,\, \forall \mu\in\left[\frac{1}{2},1\right].\]
Moreover, since $G_m\subset G_{m+1}$ for all $m\in\N$, it holds that $c_{m,\mu}\geq c_{m-1,\mu}\geq\cdots\geq c_{1,\mu}>0$. Taken together, we have shown that
\begin{equation}\label{sup on partial Am less than 0}
\sup_{w\in\partial A_m}I_{\mu}(w)\leq 0 <c_{m,\mu},
\end{equation}
and thus ($\mathcal{F}.1$) is verified. Moreover, for any $\eta$ given by ($\mathcal{F}.2$) and any $g\in H$, it holds that $\tilde{g}=\eta(1,g)$ belongs to $H$, and so ($\mathcal{F}.2$) is satisfied. Since ($\mathcal{F}.1$) and ($\mathcal{F}.2$) are satisfied, Lemma \ref{Ambro Ruiz critical levels} applies. Thus, for any $m\in\N$, we denote by $\mathcal{J}_m$ the set of values $\mu\in \left[\frac{1}{2},1\right]$ such that the function $\mu\mapsto c_{m,\mu}$ is differentiable. We then let
\[\mathcal{M}\coloneqq \bigcap\limits_{m\in\N} \mathcal{J}_m.\]
We note that since 
\[\left[\frac{1}{2},1\right]\setminus \mathcal{M} = \bigcup\limits_{m\in\N} \left(\left[\frac{1}{2},1\right]\setminus \mathcal{J}_m\right)\]
and $[\frac{1}{2},1]\setminus\mathcal{J}_m$ has zero Lebesgue measure for each $m$ by Lemma \ref{Ambro Ruiz critical levels}, then it follows that $\left[\frac{1}{2},1\right]\setminus \mathcal{M}$ has zero Lebesgue measure. Arguing as in the proof of Corollary \ref{density corollary}, we obtain that $\mathcal{M}$ is dense in $\left[\frac{1}{2},1\right]$. We can now apply Proposition \ref{Ambro Ruiz bounded PS} with $\Phi_{\mu}=I_{\mu}$. Namely, for each fixed $m\in\N$ and $\mu\in\mathcal{M}$ we obtain that there exists a bounded sequence $(u_n)_{n\in\N}\subset E(\R^N)$ such that $I_{\mu}(u_n)\to c_{m,\mu}$ and $I'_{\mu}(u_n)\to 0$. The embedding of $E(\R^N)$ into $L^{q+1}(\R^N)$ is compact by Lemma \ref{compactembeddingofeintolpplus} so, arguing as in the proof of Theorem \ref{mountainpasssoltheoremlowp}, we can show that the values $c_{m, \mu}$ are critical levels of $I_{\mu}$ for each $m\in\N$ and $\mu\in\mathcal{M}$. We then take $m$ fixed, $(\mu_n)_{n\in\N}$ an increasing sequence in $\mathcal{M}$ such that $\mu_n\to 1$, and $(u_n)_{n\in\N}\subset E(\R^N)$ such that $I'_{\mu_n}(u_n)= 0$ and $I_{\mu_n}(u_n)=c_{m, \mu_n}$. We note that since $\rho$ is homogeneous of degree $\bar{k}$ by assumption, it follows from \cite[p. 296]{Gelfand and Shilov} that $\bar{k} \rho(x)=(x, \nabla \rho)$. So, setting  $\alpha_n = \intrn (\abs{\nabla \un}^2 + \un^2)$, $\gamma_n = \intrn \rho(x) \phi_{\un} \un^2$, $\delta_n = \mu_n \intrn \abs{\un}^{q+1}$ and using the Pohozaev-type condition deduced in Lemma \ref{pohozaevlemma}, we obtain the system
	\begin{equation}
	    \begin{cases}
            \begin{array}{c c c c c c c}
	            \alpha_n & + & \gamma_n & - & \delta_n & = & 0,\\
	            \half \alpha_n & + & \frac{1}{4} \gamma_n & - &
	            \overqplus \delta_n & = & c_{m,\mu_n},\\
	            \frac{N-2}{2} \alpha_n & + & \left( \frac{N+2+2k}{4} \right) \gamma_n & - & \frac{N}{q+1} \delta_n & \leq & 0.\\
	        \end{array}
	        \end{cases}
	        \end{equation}
Since the assumptions on $\bar{k}$ guarantee that $\bar{k} > \frac{-2(q-2)}{(q-1)}>\frac{N-6}{2}$ for $q\in(2,3]$ if $N=3$ and for $q\in(2,\tstar-1)$ if $N=4,5$, it follows that we can solve this system and show that $\alpha_n,\gamma_n,\delta_n$ are all bounded as in the proof of Theorem \ref{mountainpasssoltheoremlowp}. Moreover, continuing to argue as in the proof of this theorem and using the compact embedding of $E(\R^N)$ into $L^{q+1}(\R^N)$, we can then prove that for each fixed $m$ there exists $u\in E(\R^N)$ such that, up to a subsequence, $u_n\to u$ in $E(\R^N)$, $I(u)=I_1(u)=c_{m,1}$, and $I'(u)=I_1'(u)=0$.
It therefore remains to show that $I(u)=c_{m,1} \to+\infty$ as $m\to+\infty$. In order to do so, we define
\[\tilde{\Gamma}_m\coloneqq \left\{g\in C(E_m\cap B_1, E(\R^N)): g \text{ is odd, one-to-one, } I(g(y))\leq 0 \text{ for all } y\in \partial(E_m\cap B_1)\right\},\]
\[\tilde{G}_m\coloneqq \left\{A\subset E(\R^N): A=g(E_m\cap B_1), g\in\tilde{\Gamma}_m\right\},\]
\[\tilde{b}_m\coloneqq \inf_{A\in\tilde{G}_m}\max_{u\in A} I(u).\]
We then note that by \cite[Corollary $2.16$]{Ambrosetti and Rabinowitz}, it holds that 
\[d_m\leq \tilde{b}_m,\]
where $d_m$ is given by \eqref{def of dm}. It therefore follows from Lemma \ref{divergence of critical levels} that 
\begin{equation}\label{tilde bm diverge}
\tilde{b}_m\to+\infty, \text{ as } m\to+\infty.
\end{equation}
We will now show $G_m\subseteq \tilde{G}_m$. We take $A\in G_m$. Then, by definition, there exists $g\in H$ such that $A=g(A_m)$. We define an odd homeomorphism $\varphi:E_m\cap B_1\to A_m$ by
\[\varphi(e) = t^{\nu}u(t\cdot), \quad \text{with } t=T_m||e||_{E(\R^N)}, \,u=\frac{e}{||e||_{E(\R^N)}},\]
where $T_m$ is defined in \eqref{def of Tm}, and set $\tilde{g}=g\circ\varphi$. Since we can write $A=\tilde{g}(E_m\cap B_1)$, then by the definition of $\tilde{G}_m$ we need only to show that $\tilde{g}\in\tilde{\Gamma}_m$. Clearly, $\tilde{g}\in C(E_m\cap B_1, E(\R^N))$ is odd and one-to-one. Moreover, for every $y\in\partial(E_m\cap B_1)$, setting $w=\varphi(y)\in\partial A_m$, we have $I(\tilde{g}(y))=I(g(w))$. Since $g\in H$ and $w\in\partial A_m$, then by definition it holds that $g(w)=w$. Putting everything together, we have
\[I(\tilde{g}(y))=I(g(w))=I(w)\leq \sup_{w\in\partial A_m}I(w)\leq 0,\]
where the final inequality follows from \eqref{sup on partial Am less than 0}. Hence, we have shown $\tilde{g}\in\tilde{\Gamma}_m$ and so $G_m\subseteq \tilde{G}_m$. Therefore, for each $m\in\N$, it follows that 
\[\tilde{b}_m= \inf_{A\in\tilde{G}_m}\max_{u\in A} I(u)\leq  \inf_{A\in G_m}\max_{u\in A} I(u)=c_{m,1},\]
and so, by \eqref{tilde bm diverge}, we conclude that 
\[c_{m,1}\to+\infty, \text{ as } m\to+\infty,\]
as required.
\end{proof}

\section*{Appendix A: Proof of the Pohozaev-type condition  }

\begin{proof}[Proof of Lemma \ref{pohozaevlemma}]
With the regularity remarks of Proposition \ref{reg} in place, we now multiply the first equation in \eqref{poh system} by $(x, \nabla u)$ and integrate on $B_R(0)$ for some $R>0$. We will compute each integral separately. We first note that 
\begin{equation}\label{POH1}
\begin{split}
\int_{B_R}-\Delta u (x, \nabla u) \dif x =  \frac{2-N}{2}&\int_{B_R}|\nabla u|^2 \dif x \\
&-\frac{1}{R}\int_{\partial B_R} |(x,\nabla u)|^2 \dif  \sigma +\frac{R}{2}\int_{\partial B_R}|\nabla u|^2 \dif \sigma.
\end{split}
\end{equation}
\noindent Fixing $i =1,\dots, N$, integrating by parts and using the divergence theorem, we then see that,
\begin{align}
\int_{B_R} bu(x_i \partial_i u)\dif x& =b\left[ -\frac{1}{2} \int_{B_R} u^2\dif x +\frac{1}{2}\int_{B_R}\partial_i(u^2x_i)\dif x \right] \nonumber \\ 
&=b\left[ -\frac{1}{2} \int_{B_R} u^2\dif x + \frac{1}{2}\int_{\partial B_R} u^2 \frac{x_i^2}{|x|}\dif \sigma \right].
\nonumber
\end{align}
\noindent So, summing over $i$, we get
\begin{equation}\label{POH2}
\int_{B_R} bu(x, \nabla u) \dif x= b\left[-\frac{N}{2}\int_{B_R}u^2\dif x +\frac{R}{2}\int_{\partial B_R}u^2 \dif \sigma \right].
\end{equation}
\noindent Again, fixing $i =1,\dots, N$, integrating by parts and using the divergence theorem, we find that,
\begin{align}
\int_{B_R}c\rho \phi_u u x_i (\partial_i u) \dif x &= c\bigg[ -\frac{1}{2}\int_{B_R} \rho\phi_u u^2 \dif x -\frac{1}{2}\int_{B_R} \phi_u u^2x_i (\partial_i\rho) \dif x  \nonumber \\
& \qquad\qquad -\frac{1}{2}\int_{B_R} \rho u^2  x_i  (\partial_i \phi_u) \dif x+\frac{1}{2}\int_{B_R}\partial_i (\rho\phi_u  u^2 x_i)\dif x \bigg] \nonumber\\
&=c\bigg[ -\frac{1}{2}\int_{B_R}\rho\phi_u  u^2 \dif x -\frac{1}{2}\int_{B_R} \phi_u u^2x_i (\partial_i\rho) \dif x \nonumber \\
& \qquad\qquad  -\frac{1}{2}\int_{B_R} \rho u^2  x_i  (\partial_i \phi_u) \dif x+\frac{1}{2}\int_{\partial B_R}\rho \phi_u  u^2 \frac{x_i^2}{|x|}\dif \sigma \bigg]. \nonumber
\end{align}
\noindent Thus, summing over $i$, we get
\begin{align}\label{POH3}
\int_{B_R}c\rho\phi_u u (x, \nabla u) \dif x &= c\bigg[ -\frac{N}{2}\int_{B_R}\rho\phi_u  u^2 \dif x -\frac{1}{2}\int_{B_R} \phi_u u^2 (x, \nabla \rho) \dif x  \nonumber\\
& \qquad\qquad -\frac{1}{2}\int_{B_R} \rho u^2  (x, \nabla \phi_u) \dif x+\frac{R}{2}\int_{\partial B_R} \rho\phi_u  u^2 \dif \sigma \bigg].  
\end{align}
\noindent Finally, once more fixing $i=1,\dots, N$, integrating by parts and using the divergence theorem, we find that,
\begin{align*}
\int_{B_R}d |u|^{q-1}u (x_i \partial_i u) \dif x = d \left[ \frac{-1}{q+1} \int_{B_R} |u|^{q+1} \dif x+ \frac{1}{q+1} \int_{\partial B_R} |u|^{q+1} \frac{x_i^2}{|x|} \dif \sigma \right],
\end{align*}
\noindent and so, summing over $i$, we see that 
\begin{equation}\label{POH4}
\begin{split}
\int_{B_R}d|u|^{q-1}u (x, \nabla u) \dif x=d\bigg[\frac{-N}{q+1}&\int_{B_R}|u|^{q+1}\dif x\\
&+\frac{R}{q+1}\int_{\partial B_R}|u|^{q+1}\dif \sigma\bigg].
\end{split}
\end{equation}
\noindent Putting \eqref{POH1}, \eqref{POH2}, \eqref{POH3} and \eqref{POH4} together, we see that
\begin{equation}\label{prePOH}
\begin{split}
\frac{2-N}{2}&\int_{B_R}|\nabla u|^2 \dif x -\frac{1}{R}\int_{\partial B_R} |(x,\nabla u)|^2 \dif \sigma +\frac{R}{2}\int_{\partial B_R}|\nabla u|^2 \dif \sigma  \\
&+b\bigg[-\frac{N}{2}\int_{B_R}u^2\dif x  +\frac{R}{2}\int_{\partial B_R}u^2 \dif \sigma \bigg] \\
&\qquad+ c\bigg[ -\frac{N}{2}\int_{B_R}\rho\phi_u  u^2 \dif x-\frac{1}{2}\int_{B_R} \phi_u u^2 (x, \nabla \rho) \dif x\\
&\qquad\qquad-\frac{1}{2}\int_{B_R} \rho u^2  (x, \nabla \phi_u) \dif x +\frac{R}{2}\int_{\partial B_R} \rho \phi_u  u^2 \dif \sigma \bigg]\\
&\qquad\qquad\qquad-d\left[\frac{-N}{q+1}\int_{B_R}|u|^{q+1}\dif x+\frac{R}{q+1}\int_{\partial B_R}|u|^{q+1}\dif \sigma\right] =0.
\end{split}
\end{equation}
\noindent We now multiply the second equation in \eqref{poh system} by $(x, \nabla \phi_u)$ and integrate on $B_R(0)$ for some $R>0$. By a simple calculation we see that 
\begin{equation*}
\begin{split}
\int_{B_R} \rho u^2 (x, \nabla \phi_u)\dif x &= \int_{B_R} -\Delta \phi_u (x, \nabla \phi_u) \dif x \\
&= \frac{2-N}{2}\int_{B_R}|\nabla \phi_u|^2 \dif x -\frac{1}{R}\int_{\partial B_R} |(x,\nabla \phi_u)|^2 \dif  \sigma \\
&\qquad+\frac{R}{2}\int_{\partial B_R}|\nabla \phi_u|^2 \dif \sigma.
\end{split}
\end{equation*}
\noindent Substituting this into \eqref{prePOH} and rearranging, we get
\begin{equation}\label{POHwithboundary}
\begin{split}
& \frac{N-2}{2}\int_{B_R}|\nabla u|^2 \dif x +\frac{Nb}{2}\int_{B_R}u^2\dif x +\frac{(N+k)c}{2}\int_{B_R}\rho\phi_u  u^2 \dif x  \\
&\qquad\qquad   +\frac{c(2-N)}{4}\int_{B_R}|\nabla \phi_u|^2\dif x -\frac{Nd}{q+1}\int_{B_R}|u|^{q+1}\dif x \\
& \qquad\leq\frac{N-2}{2}\int_{B_R}|\nabla u|^2 \dif x +\frac{Nb}{2}\int_{B_R}u^2\dif x +\frac{Nc}{2}\int_{B_R}\rho\phi_u  u^2 \dif x  \\
&\qquad\qquad   +\frac{c}{2}\int_{B_R} \phi_u u^2 (x, \nabla \rho) \dif x +\frac{c(2-N)}{4}\int_{B_R}|\nabla \phi_u|^2\dif x-\frac{Nd}{q+1}\int_{B_R}|u|^{q+1}\dif x \\
&\qquad=-\frac{1}{R}\int_{\partial B_R} |(x,\nabla u)|^2 \dif \sigma +\frac{R}{2}\int_{\partial B_R}|\nabla u|^2 \dif \sigma +\frac{bR}{2}\int_{\partial B_R}u^2 \dif\sigma   \\
&\qquad\qquad +\frac{cR}{2}\int_{\partial B_R} \rho\phi_u  u^2 \dif \sigma +\frac{c}{2R}\int_{\partial B_R} |(x,\nabla \phi_u)|^2 \dif \sigma \\
&\qquad\qquad\qquad-\frac{cR}{4}\int_{\partial B_R}|\nabla \phi_u|^2 \dif \sigma-\frac{dR}{q+1}\int_{\partial B_R}|u|^{q+1}\dif \sigma,
\end{split}
\end{equation}
\noindent where we have used the assumption $k\rho(x)\leq (x,\nabla \rho)$ for some $k\in\R$ to obtain the first inequality. We now call the right hand side of \eqref{POHwithboundary} $I_R$, namely
\begin{equation*}
\begin{split}
I_R & \coloneqq-\frac{1}{R}\int_{\partial B_R} |(x,\nabla u)|^2 \dif \sigma +\frac{R}{2}\int_{\partial B_R}|\nabla u|^2 \dif \sigma +\frac{bR}{2}\int_{\partial B_R}u^2 \dif\sigma   \\
&\qquad\qquad +\frac{cR}{2}\int_{\partial B_R} \rho\phi_u  u^2 \dif \sigma +\frac{c}{2R}\int_{\partial B_R} |(x,\nabla \phi_u)|^2 \dif \sigma \\
&\qquad\qquad\qquad-\frac{cR}{4}\int_{\partial B_R}|\nabla \phi_u|^2 \dif \sigma-\frac{dR}{q+1}\int_{\partial B_R}|u|^{q+1}\dif \sigma.
\end{split}
\end{equation*}
We note that $|(x,\nabla u)|\leq R|\nabla u |$ and $|(x,\nabla \phi_u)|\leq R|\nabla \phi_u |$ on $\partial B_R$, so it holds that
\begin{align*}
|I_R| &\leq \frac{3R}{2}\int_{\partial B_R}|\nabla u|^2 \dif \sigma +\frac{bR}{2}\int_{\partial B_R}u^2 \dif\sigma  \\
&\qquad + \frac{cR}{2}\int_{\partial B_R}\rho \phi_u  u^2 \dif \sigma+\frac{3cR}{4}\int_{\partial B_R}|\nabla \phi_u|^2 \dif \sigma+\frac{dR}{q+1}\int_{\partial B_R}|u|^{q+1}\dif\sigma.
\end{align*}
\noindent Now, since $|\nabla u|^2$, $u^2 \in L^1(\R^N)$ as $u\in E (\R^N)\subseteq H^1(\R^N)$, $ \rho\phi_u u^2$, $|\nabla \phi_u|^2 \in L^1(\R^N)$ because $\int_{\R^N}\rho\phi_u u^2\dif x=\int_{\R^N}|\nabla \phi_u|^2\dif x$ and $\phi_u \in D^{1,2}(\R^N)$, and $|u|^{q+1} \in L^1(\R^N)$ because $E(\R^N) \hookrightarrow L^s(\R^N)$ for all $s \in [2,\tstar]$, then it holds that $I_{R_n} \to 0$ as $n\to +\infty$ for a suitable sequence $R_n \to +\infty.$  Moreover, since \eqref{POHwithboundary} holds for any $R>0$, it follows that 
\begin{align*}\label{POH limit}
\frac{N-2}{2}\int_{\R^N}|\nabla u|^2 \dif x +\frac{Nb}{2}&\int_{\R^N}u^2\dif x  +\frac{(N+k)c}{2}\int_{\R^N}\rho\phi_u  u^2 \dif x \\
&+\frac{c(2-N)}{4}\int_{\R^N}|\nabla \phi_u|^2\dif x -\frac{Nd}{q+1}\int_{\R^N}|u|^{q+1}\dif x \leq 0,
\end{align*}
and so, we obtain
\[\frac{N-2}{2}\int_{\R^N}|\nabla u|^2 \dif x +\frac{Nb}{2}\int_{\R^N}u^2\dif x  +\frac{(N+2+2k)c}{4}\int_{\R^N}\rho\phi_u  u^2 \dif x-\frac{Nd}{q+1}\int_{\R^N}|u|^{q+1}\dif x \leq 0, \]\\
using the fact that $\int_{\R^N}|\nabla \phi_u|^2\dif x=\int_{\R^N}\rho\phi_u u^2\dif x $. This completes the proof.
\end{proof}
\section*{Data availability statement}
On behalf of all authors, the corresponding author states that there are no data associated to our manuscripts.

\section*{Conflicts of interest/Competing interests}
On behalf of all authors, the corresponding author states that there is no conflict of interest.

\renewcommand\refname{References}

\end{document}